\newcommand{\comment}[1]{{$\star$\sf\textbf{#1}$\star$}}
\theoremstyle{plain}
\newtheorem{thm}{Theorem}[section]
\newtheorem{cor}[thm]{Corollary}
\newtheorem{lem}[thm]{Lemma}
\newtheorem{prop}[thm]{Proposition}
\theoremstyle{definition}
\newtheorem{defn}[thm]{Definition}
\newtheorem{ex}[thm]{Example}
\newtheorem{rem}[thm]{Remark}
\newtheorem{alg}[thm]{Algorithm}
\newtheorem*{note*}{Important note}
\newenvironment{bullets}{\begin{itemize}\setlength\itemsep{-.25ex}}{\end{itemize}}
\newenvironment{alglist}%
    {\begin{list}
        {}
        {\leftmargin=5.4em\labelwidth=5.1em\labelsep=.6em
         \topsep=-2ex\itemsep=-.5ex}}
    {\vspace{1ex}\end{list}}
\def\routine#1{\item[{\sc{#1}{\ }}]}%\dotfill\textsc{#1}}
\def\procedure#1{{\sc{#1}}}
\newenvironment{routinelist}[1]%
    {\routine{#1}\begin{list}
        {}
        {\leftmargin=2.9em\labelwidth=2.4em\labelsep=.5em
         \topsep=-2ex\itemsep=-.5ex}}
    {\end{list}}
    {\begin{list}
        {}
        {\leftmargin=3.0em\labelwidth=4.8em\labelsep=.5em
         \topsep=-2ex\itemsep=-.5ex}}
    {\end{list}}
\newcounter{separated}
\newcommand\RR{\mathbb{R}}
\newcommand\cA{\mathcal{A}}
\newcommand\cB{\mathcal{B}}
\newcommand\cE{\mathcal{E}}
\newcommand\cG{\mathcal{G}}
\newcommand\cM{\mathcal{M}}
\newcommand\Or{\mathcal{O}}
\newcommand\cP{\mathcal{P}}
\newcommand\cS{\mathcal{S}}
\newcommand\cT{\mathcal{T}}
\newcommand\cV{\mathcal{V}}
\newcommand\cX{\mathcal{X}}
\newcommand\oH{\hspace{.35ex}\ol{\hspace{-.35ex}H}}
\newcommand\ol[1]{{\overline{#1}}}
\newcommand\tx{\textstyle}
\newcommand\wt[1]{{\widetilde{#1}}}
\newcommand\del{\partial}
\newcommand\geod{\gamma}
\newcommand\minus{\setminus}
\newcommand\nothing{\varnothing}
\newcommand\tr[1]{ \begin{list}{}{\setlength{\leftmargin}{#1em}} \item}
\newcommand\tl{ \end{list}}
\DeclareMathOperator{\var}{var}
\providecommand{\abs}[1]{\lvert#1\rvert}
\providecommand{\norm}[1]{\lVert#1\rVert}
\begin{document}%%%%%%%%%%%%%%%%%%%%%%%%%%%%%%%%%%%%%%%%%%%%%%%%%%%%%%
%%%%%%%%%%%%%%%%%%%%%%%%%%%%%%%%%%%%%%%%%%%%%%%%%%%%%%%%%%%%%%%%%%%%%%
%%%%%%%%%%%%%%%%%%%%%%%%%%%%%%%%%%%%%%%%%%%%%%%%%%%%%%%%%%%%%%%%%%%%%%

%\begin{frontmatter}

\title{\vspace{-5ex}Polyhedral computational geometry for averaging metric phylogenetic trees\vspace{-.5ex}}
%\title{Averaging metric phylogenetic trees}
%\title{\vspace{-5ex}Averaging metric phylogenetic trees\vspace{-.5ex}}
\author{Ezra Miller \and Megan Owen \and J. Scott Provan}
%\address{Department of Mathematics, Duke University, Durham, NC, 27708, USA}
%\ead{ezra@math.duke.edu}
%\author{Megan Owen\corref{cor1}}
%\ead{megan.owen@uwaterloo.ca}
%\address{Cheriton School of Computer Science, University of Waterloo, Waterloo, ON, N2L 3G1, Canada, 1-519-888-4567 ex 33884}
%\author{J. Scott Provan}
%\ead{scott_provan@unc.edu}
%\address{Department of Statistics and Operations Research, University of North Carolina, Chapel Hill, NC, 27599-3180, USA}

%\cortext[cor1]{Corresponding author}

%\date{Received: date / Accepted: date}
%\date{22 December 2012\vspace{-.5ex}}
\maketitle

\begin{abstract}
This paper investigates the computational geometry relevant to
calculations of the Fr\'echet mean and variance for probability
distributions on the phylogenetic tree space of Billera, Holmes and
Vogtmann, using the theory of probability measures on spaces of
nonpositive curvature developed by Sturm.  We show that the
combinatorics of geodesics with a specified fixed endpoint in tree
space are determined by the location of the varying endpoint in a
certain polyhedral subdivision of tree space.  The variance function
associated to a finite subset of tree space has a fixed $C^\infty$
algebraic formula within each cell of the corresponding subdivision,
and is continuously differentiable in the interior of each orthant of
tree space.  We use this subdivision to establish two iterative
methods for producing sequences that converge to the Fr\'echet mean:
one based on Sturm's Law of Large Numbers, and another based on
descent algorithms for finding optima of smooth functions on convex
polyhedra.  We present properties and biological applications of
Fr\'echet means and extend our main results to more general globally
nonpositively curved spaces composed of Euclidean orthants.
\end{abstract}

%\begin{keyword}
%tree space \sep Fr\'echet mean \sep polyhedral subdivision \sep descent method \sep phylogenetics \sep nonpositively curved space
%\end{keyword}
%\end{frontmatter}

%
%\subclass{68U05 \and 05C0 \and 52B99 \and 62P10 \and 92D15 \and 60B05 \and 52-04 \and 62H99 \and 52A41 \and 90C57 \and 92B10 \and  92-08 \and 53C23}

\setcounter{tocdepth}{3}
\tableofcontents

%\newpage
%\pagenumbering{arabic}

%%%%%%%%%%%%%%%%%%%%%%%%%%%%%%%%%%%%%%%%%%%%%%%%%%%%%%%%%%%%%%%%%%%%%%
\section*{Introduction}%%%%%%%%%%%%%%%%%%%%%%%%%%%%%%%%%%%%%%%%%%%%%%%
\addcontentsline{toc}{section}{\numberline{}Introduction}%%%%%%%%%%%%%
%%%%%%%%%%%%%%%%%%%%%%%%%%%%%%%%%%%%%%%%%%%%%%%%%%%%%%%%%%%%%%%%%%%%%%

The development of statistical methods for studying phylogenetic
trees, and in particular the search for meaningful notions of
consensus tree for phylogenetic data, has been of considerable
importance in biology for four decades.  Starting with the problem as
posed by Adams~\cite{Adams}, a great deal of research has been done,
and a myriad of definitions proposed, relating to consensus trees in
phylogenetics; see \cite{Bryant03} for an excellent overview.
The problem has been confounded by the combinatorial nature of the
trees themselves.
According to Cranston and Rannala \cite{bayesPosterior},
``Phylogenetic inference has long been troubled by the difficulty of
performing statistical analysis on tree topologies. The topologies are
discrete, categorical, and non-nested hypotheses about the species
relationships.  They are not amenable to standard summary analyses
such as the calculation of means and variances and cause difficulties
for many traditional forms of hypothesis testing.''  Other papers
share concerns about issues such as these \cite{Barret,Holmes}.
%HolderEtAl08}.

The introduction by Billera, Holmes, and Vogtmann of phylogenetic tree
space \cite{BHV01} opened statistical analysis of tree-like data to a
wide and computationally tractable variety of techniques
\cite{Holmes05}.  Tree space, with its geodesic distance, is a
\emph{globally nonpositively curved}\/ (abbreviated to \emph{global
NPC}) space, and as a result it has convexity properties that imply
uniqueness of means as well as other important statistical and
geometric objects, while also giving a framework for effective
computational methods to calculate these objects.  One of the major
uses of the convexity properties was the discovery by Owen and
Provan~\cite{OwenProvan10} of a fast algorithm for computing geodesics
in this space (see Section~\ref{s:treespace} for this algorithm as
well as the background tree space geometry necessary to state it).
Chakerian and Holmes \cite{chakerian-holmes} subsequently showed that
phylogenetic tree space provides an excellent platform for
implementing several distance-based statistical techniques, and Nye
\cite{Nye12} has shown how this space can be used to perform principal
component analysis on tree data.

Perhaps the two most fundamental concepts of interest in statistical
analysis of data are that of \emph{sample mean} (or \emph{average})
and its associated \emph{variance}.  The basic goal of this paper is
to
demonstrate the computational effectiveness of certain notions of
statistical mean and variance for probability distributions on tree
space.  The average that we use is the \emph{Fr\'echet mean}, or
\emph{barycenter}: the point in tree space that minimizes its sum of
squared geodesic distances to the sample points
(Section~\ref{s:mean}).  Our decision to use this definition is
motivated by work of Sturm~\cite{sturm}, who identified the Fr\'echet
mean as a theoretically rich statistical object associated with
sampling from a specified distribution on a global NPC space (see
Theorem~\ref{t:sturmPoints}).
Fr\'echet means in tree space and the algorithm for computing them
that arises from Sturm's work (Algorithm~\ref{a:sturm}) have been
independently developed~by~Ba\v c\'ak~\cite{Bacak12}.

Our principal theoretical contribution lies in the
discovery of polyhedral structure governing the variation of geodesics
in tree space as one endpoint varies (Section~\ref{s:vistal}).  To be
more precise,
if $T$ is a fixed point in tree space, then in appropriate coordinates
on tree space, the set of
points whose geodesics to~$T$ share the same combinatorics comprise a
convex polyhedral cone called a \emph{vistal cell}
(Theorem~\ref{t:vistal_face_combinatorics}), and the vistal cells
constitute a polyhedral subdivision of tree space
(Theorem~\ref{t:cell_complex}).  
This \emph{metric combinatorics} also
arises in single source shortest path queries (see \cite{Mitchell00}
for a survey), and has direct roots in surprisingly similar statements for boundaries of
convex polyhedra \cite{unfolding}, the parallel being unexpected
because boundaries of convex polyhedra are positively curved, in
contrast to the negative curvature of tree space.
However, polyhedrality of the subdivision is generally not encountered
  outside of the planar or positively curved cases, and thus is
  completely unexpected here; see Example~\ref{e:4dim} for a hint of
  the complexity that can occur even for global NPC cubical complexes.

Metric combinatorics of tree space, particularly its polyhedral
nature, combines with generalities on nonlinear optimization in NPC
spaces to give a second iterative method converging to the mean
(Algorithm~\ref{a:descent}) via descent
procedures.  The crucial observations are that the variance function
has a unique local minimum on tree space, is continuously
differentiable on each Euclidean orthant in tree space, and has a simple
algebraic formula within the interior of each~vistal~cell.

% describe the problem: how to find means in tree space; mention that
% the long term goal is to do statistics on tree space, where we do
% flats, etc (but for future paper) \comment{EM: I don't think it's
% wise to mention anything as specific as flats unless we know we have
% specific work in progress on that}

Means in tree space have subtle, sometimes peculiar properties that
inform our particular motivations (Section~\ref{s:apps}), which come
primarily from biological and medical applications, although we expect
these observations to impact other fields where distributions of
metric trees naturally appear.  Evolutionary biology, for instance,
considers actual phylogenetic trees, each representing a putative
evolutionary history of a set species or genes
(Example~\ref{e:gene}).  In medical imaging, trees can represent blood
vessels in human brain scans \cite{SkwererJMIV13}  or lung airway trees \cite{IPMI2013}, for example.

Some of the theory in Sections~\ref{s:treespace}\textendash{}\ref{s:computing}
extends to arbitrary global NPC spaces, and all of it extends to
global NPC orthant spaces (Section~\ref{s:npc}).  For the first
iterative procedure (Algorithm~\ref{a:sturm}) and the rest of
Section~\ref{s:mean}, as well as for the shortest path combinatorics
in Section~\ref{s:treespace}, this means working in arbitrary global
NPC spaces (Sections~\ref{ss:geomNPC}\textendash{}\ref{ss:NPCmean}).  For the
second iterative procedure (Algorithm~\ref{a:descent}) and the rest of
Section~\ref{s:computing}, as well as for the metric combinatorics in
Section~\ref{s:vistal}, this means working in piecewise Euclidean
global NPC spaces that are formed by gluing orthants together by rules
similar to \textemdash{} but substantially more general than \textemdash{} those defining tree
space (Section~\ref{sub:orthant}).  The extensions suggest exciting
new research in applying both statistical methods and numerical
nonlinear programming techniques to a wide variety of problems.
\textbf{Important note:} readers interested in the generality of
abstract orthant spaces or arbitrary NPC spaces are urged to begin
with Section~\ref{s:npc}, which sets up the notation and concepts in
Sections~\ref{s:treespace}--\ref{s:computing} from that perspective.
Hence such readers can avoid checking the proofs in the earlier
sections twice.

%%%%%%%%%%%%%%%%%%%%%%%%%%%%%%%%%%%%%%%%%%%%%%%%%%%%%%%%%%%%%%%%%%%%%%
\paragraph*{Acknowledgements}%%%%%%%%%%%%%%%%%%%%%%%%%%%%%%%%%%%%%%%%
%%%%%%%%%%%%%%%%%%%%%%%%%%%%%%%%%%%%%%%%%%%%%%%%%%%%%%%%%%%%%%%%%%%%%%

Our thanks go to Michael Turelli and Elen Oneal for help with
references and discussions on biological applications, to Antonis
Rokas for kindly providing the yeast data set, 
%to Sean Skwerer and \.Ipek O\u guz for preparing the brain artery dataset, 
and to Dennis
Barden for comments on a draft of the paper.  EM had support from NSF
grants DMS-0449102 = DMS-1014112 and DMS-1001437.  MO was partially
supported by a desJardins Postdoctoral Fellowship in Mathematical
Biology at University of California Berkeley and by the U.S. National
Science Foundation under grant DMS-0635449 to the Statistical and
Applied Mathematical Sciences Institute (SAMSI).  Much of this research was
facilitated by and carried out at SAMSI as an outgrowth of the
2008\textendash2009 program on Algebraic Methods in Systems Biology
and~Statistics.

%%%%%%%%%%%%%%%%%%%%%%%%%%%%%%%%%%%%%%%%%%%%%%%%%%%%%%%%%%%%%%%%%%%%%%
\section{Tree space and the geodesic algorithm}\label{s:treespace}%%%%
%%%%%%%%%%%%%%%%%%%%%%%%%%%%%%%%%%%%%%%%%%%%%%%%%%%%%%%%%%%%%%%%%%%%%%

In this section, we describe the space of phylogenetic trees
introduced by Billera, Holmes, and Vogtmann~\cite{BHV01}, as well as a
distance and characterization of geodesics in this space.

%%%%%%%%%%%%%%%%%%%%%%%%%%%%%%%%%%%%%%%%%%%%%%%%%%%%%%%%%%%%%%%%%%%%%%
\subsection{Phylogenetic tree space}\label{ss:treespace}%%%%%%%%%%%%%%

A \emph{phylogenetic $n$-tree}~$T$, or simply an \emph{$n$-tree}, is
an acyclic graph $T$ with edge set $\cE=\cE_T$ whose leaves (degree 1
nodes) are labeled with index set $L=\{0,1,\ldots,n\}$, and whose
interior vertices have degree at least 3.  (The label 0 is often
referred to as the \emph{root}\/ of $T$, although that is not relevant
in this paper.)  The maximum number of edges in an $n$-tree is $2n-1$.
Each edge $e$ of $T$ is assigned a nonnegative \emph{length} $|e|_T$,
or $\abs e$ in case the ambient tree is clear.  Removal of any edge
$e$ from $T$ determines a unique partition of the leaves of $T$ into
two subsets $X_e$ and $\ol X_e$; the pair $X_e|\ol X_e$ is called the
\emph{split}\/ associated with $e$.  A key property of splits in trees
is that the splits $X_e|\ol X_e$ and $X_f|\ol X_f$ of any pair of
edges $e$ and $f$ are \emph{compatible}, that is, one of the sets $X_e
\cap X_f$, $X_e \cap \ol X_f$, $\ol X_e \cap X_f$, or $\ol X_e \cap
\ol X_f$ is empty.  A set $S$ of splits is called \emph{compatible} if
every pair of splits in $S$ is compatible.  It turns out
\cite[Theorem~3.1.4]{SS03} that any compatible set of splits on $L$
corresponds to a unique tree, and so from now on we identify a tree
$T$ by simply giving the splits and edge lengths for each edge in~$T$.

A tree $T$ can have an edge $e$ whose associated length $|e|_T$
is~$0$.  This corresponds to the edge $e$ having been
\emph{contracted}\/ in~$T$.  Denoting the set of edges of~$T$ with
nonzero length by~$\cE^+_T$ allows the identification $T \sim T'$
between two trees $T$ and~$T'$ whenever (i)~$\cE^+_T=\cE^+_{T'}$ and
(ii)~their nonzero edge lengths are equal.

\begin{ex}\label{e:tree_examples}
Two 5-trees are depicted in Figure~\ref{f:tree_examples}.  For
simplicity, we only give the splits and edge lengths for the three
internal edges in each tree.  The six internal edges are distinct
since they have different splits, and the splits within each tree are
compatible.  The only compatible pairs between the two trees, however,
are $\{e_1,e_6\}$, $\{e_2,e_5\}$, and $\{e_3,e_4\}$.
\begin{figure}[ht]
\centering
\scalebox{0.9}{ \input{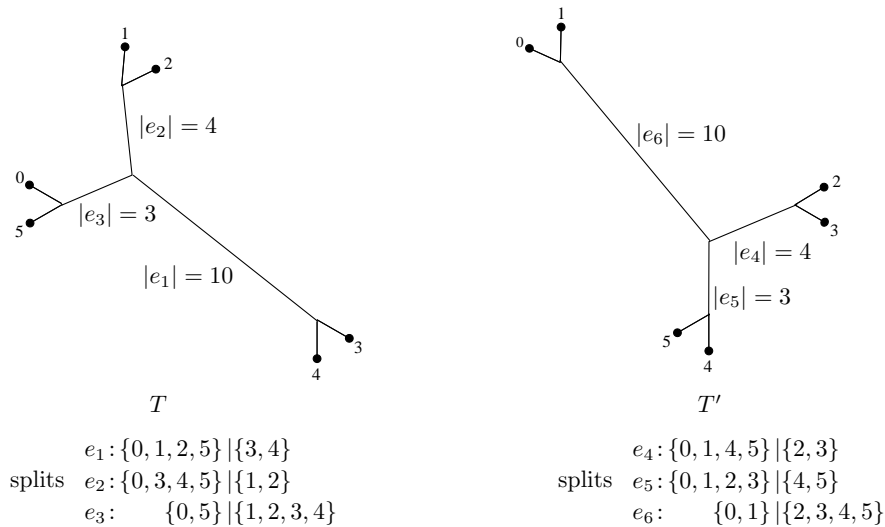} }\\[1.5em]
\caption{An example of two 5-trees.}
\label{f:tree_examples}
\centering
\end{figure}
\end{ex}

The \emph{tree space}\/ $\cT_n$ introduced by Billera, Holmes, and
Vogtmann~\cite{BHV01} is the space of all phylogenetic $n$-trees.  It
is obtained by representing each tree $T\in \cT_n$ on edge set $\cE$
by a vector in the Euclidean \emph{orthant}\/ $\Or(T) = \Or(\cE)=
\RR^\cE_+$, whose coordinate values are equal to the corresponding
lengths of the edges of $T$.  As above, trees $T$ and $T'$ are
identified between orthants whenever the associated trees satisfy $T
\sim T'$.  This makes $\cT_n$ a union of $(2n-1)$-dimensional
orthants \textemdash{}called \emph{maximal}\/ orthants \textemdash{}whose interiors are
disjoint and which are identified along their boundaries through the
equivalence $\sim$ given above.  A \emph{path} in $\cT_n$ is the image
of a continuous map $\gamma:[0,1] \to \cT_n$.  The \emph{Euclidean length} of a path
in $\cT_n$ is the sum of the Euclidean lengths of its restrictions to
the maximal orthants.  This length endows $\cT_n$ with the metric~$d$
in which $d(T,T')$ is the infimum of the Euclidean lengths of the
paths from $T$ to~$T'$.  Note that $d(T,T') < \infty$, since the space
$\cT_n$ is path-connected: any two points can be joined by straight
line segments through the origin.

%%%%%%%%%%%%%%%%%%%%%%%%%%%%%%%%%%%%%%%%%%%%%%%%%%%%%%%%%%%%%%%%%%%%%%
\subsection{Geodesics in tree space}\label{ss:geodesics}%%%%%%%%%%%%%%

Billera, Holmes, and Vogtmann \cite{BHV01} show that tree space is
\emph{globally non-positively curved}\/ (a \emph{global NPC} space),
equivalently known in this context as \emph{CAT(0)}.  Among other
things, this implies that shortest paths in tree space are unique, so
they are unambiguously referred to as \emph{geodesics}.
This section summarizes the key results of \cite{Owen10} and
\cite{OwenProvan10}, which investigate the structure of geodesics in
tree space and provide an $O(n^4)$-algorithm \textemdash{} the \emph{GTP
algorithm} \textemdash{}to find shortest paths.  For notation, if $T$ is a tree
with edge set~$\cE$ and $A \subseteq \cE$, then we write
$$%
  \norm A_T = \sqrt{\sum_{e \in A} |e|^2_T}
$$
and use $\norm A$ if the tree $T$ is clear.  This means that $\norm A
= |e|$ whenever $A = \{e\}$.

We express a geodesic with endpoints $X$ and $T$ as a parameterized
curve $\geod: [0,1] \to \cT_n$ with $\geod(0)=X$, $\geod(1)=T$, and 
$d(\geod(t),\geod(t')) = \abs{t - t'} \cdot d(X,T)$ for all $t,t' \in [0,1]$.  If
an edge $e$ lies in both $X$ and~$T$, then it lies in every tree on
the path $\geod$, with length uniformly changing between the two
terminal values \cite[Section~4.2]{BHV01}.  We therefore focus first
on the case when $X$ and $T$ have no internal edges in common, and
ignore the lengths of the \emph{pendant} edges (those containing
leaves) in the distance computation.

Each geodesic in tree space is a sequence of straight line segments,
called \emph{legs}, because tree space is piecewise Euclidean.  Each
leg is contained within a single orthant $\Or(E_i \cup F_i)$, where
$E_i \subseteq \cE_X$ and $F_i \subseteq \cE_T$.  The precise
properties of the sets $E_i$ and $F_i$ making up these legs were
determined in \cite{Owen10}.  In particular, define the \emph{support}
$(\cA,\cB) = ((A_1,\ldots, A_k), (B_1,\ldots, B_k))$ of a geodesic
$\geod$ to consist of a pair consisting of a partition $A_1
\cup\cdots\cup A_k$ of $\cE_X$ and a partition $B_1 \cup \cdots \cup
B_k$ of $\cE_T$ such that the following property holds:
\begin{enumerate}[\quad\rm (P1)]\setlength\itemsep{-.5ex}
\item%(P1)
for each $i > j$, the union $A_i \cup B_j$ is compatible.
\end{enumerate}
The geodesic $\geod$ has legs in $\Or(E_i \cup F_i)$, where
\begin{align*}
  E_i&= A_{i+1}\cup\cdots\cup A_k\\
\text{and }
  F_i&= B_1\cup\cdots\cup B_i.
\end{align*}
The individual pairs $(A_i,B_i)$ are the \emph{support pairs}\/ for
the geodesic.

Whether the shortest piecewise-linear path having these legs actually
forms the geodesic between $X$ and $T$ is determined by the following
two properties for $(\cA,\cB)$.
\begin{enumerate}[\quad\rm (P1)]\addtocounter{enumi}{1}
\item%(P2)
$\displaystyle\frac{\norm{A_1}}{\norm{B_1}}
\leq \frac{\norm{A_2}}{\norm{B_2}}
\leq \cdots
\leq \frac{\norm{A_k}}{\norm{B_k}}$.
This is called the \emph{ratio sequence}\/ for $(\cA,\cB)$.
\item%(P3)
For all $(A_i, B_i)$ and partitions $I_1 \cup I_2$ of $A_i$ and $J_1
\cup J_2$ of $B_i$ such that $I_2\cup J_1$ is compatible, the inequality
$\frac{\norm{I_1}}{\norm{J_1}} \geq \frac{\norm{I_2}}{\norm{J_2}}$
holds.
\end{enumerate}
The properties (P1)\textendash(P3) determine the geodesic between $X$ and $T$,
as well as the algebraic description of this geodesic given in Theorem 2.4 in \cite{OwenProvan10}.

The case where $X$ and $T$ have a nonempty set~$C$ of
common edges was addressed in \cite[Section~4]{OwenProvan10}: remove
the common edges between $X$ and $T$ from each tree, and then find the
paths between the remaining disjoint forests, matching trees by their
leaf sets.  The common edges are then placed into the path with the
length of each such edge being 
\begin{equation}\label{eq:common_edges}
(1-\lambda)|e|_X + \lambda |e|_T.
\end{equation}
This also allows pendant edges to be taken into account.

To be able to work more easily with trees having common edges, we
extend Theorem~2.4 in \cite{OwenProvan10} to the case where $X$ and
$T$ have common edges, and in the process simplify the description of
the geodesic considerably.  To do this, we use the following three
important conventions.
\begin{enumerate}[\quad(a)]\setlength\itemsep{-.5ex}
\item%
An edge is never compatible with itself; thus the pairs of identical
edges in $X$ and~$T$ must appear in the same support pair~$(A_i,
B_i)$.
\item%
$\norm {A_i} = - \sqrt{\sum_{e \in A_i} |e|^2_T}$ for any set $A_i$ of
edges of $X$ in common with $T$.
\item%
We extend the notation for support pair by adding the additional sets
$$%
  A_0 = B_0 = A_{k+1} = B_{k+1} = \nothing
$$
and define $\frac{\norm{A_0}}{\norm{B_0}} = -\infty$
and $\frac{\norm{A_{k+1}}}{\norm{B_{k+1}}} = \infty$.
\end{enumerate}
With these conventions we can restate the unified result.

\begin{thm}\label{t:prelimThm}
Let $X$ and $T$ be any two trees in $\cT_n$ (not necessarily
disjoint), and let $(\cA,\cB)$ be a support for $X$ and $T$ satisfying
(P2) and (P3).  The unique geodesic $\geod =
\{\gamma(\lambda):0\leq\lambda\leq 1\}$ from $X$ to~$T$ has legs
\begin{equation}\label{eq:legs2}
\geod^i = \left\{\gamma(\lambda) : \frac{\norm{A_i}}{\norm{B_i}} \leq
\frac{\lambda}{1-\lambda} < \frac{\norm{A_{i+1}}}{\norm{B_{i+1}}}\right\}
\quad\text{ for } i = 0,\ldots,k,
\end{equation}
The points on each leg $\geod^i$ are associated with the tree $T_i$
having edge set
$$%
  B_1\cup\cdots\cup B_i\cup A_{i+1}\cup\cdots\cup A_k
$$
and edge lengths
\begin{equation}\label{eq:edgelength2}
|e|_{T_i}
  =
  \displaystyle\left\{\begin{array}{ll}
    \frac{\tx(1-\lambda)\norm{A_j}-\lambda\norm{B_j}}{\tx\norm{A_j}}|e|_X
    &
    \text{if } e\in A_j
  \\[1em]
    \frac{\tx\lambda\norm{B_j}-(1-\lambda)\norm{A_j}}{\tx\norm{B_j}}|e|_T
    &
    \text{if } e\in B_j.
  \end{array}\right.
\end{equation}
The length of\/ $\geod$ is
\begin{equation}\label{eq:pathlength2}
  L(\geod) =
  \Big\Arrowvert
  \big(
    \norm{A_1} + \norm{B_1},
    \ldots,
    \norm{A_k}+\norm{B_k} \;
  \big)
  \Big\Arrowvert.
\end{equation}
\end{thm}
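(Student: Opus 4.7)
The plan is to reduce to the disjoint-edge-set case of \cite[Theorem~2.4]{OwenProvan10} by peeling off the common edges of $X$ and $T$, and then to verify that conventions (a)\textendash(c) correctly encode the common-edge contribution inside the single unified formulas \eqref{eq:legs2}\textendash\eqref{eq:pathlength2}. Let $C = \cE_X \cap \cE_T$ be the set of (split-identical) common edges. By \cite[Section~4]{OwenProvan10}, the geodesic from $X$ to $T$ is obtained by computing the geodesic between the trees $X^\circ, T^\circ$ produced by deleting $C$ from $X$ and $T$, and then reinserting each $e \in C$ with length $(1-\lambda)|e|_X + \lambda|e|_T$ as in \eqref{eq:common_edges}. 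Applying \cite[Theorem~2.4]{OwenProvan10} to $(X^\circ, T^\circ)$ yields a support $(\cA',\cB')$ satisfying (P2) and (P3), together with formulas that are literally the restrictions of \eqref{eq:legs2}\textendash\eqref{eq:pathlength2} to the non-common coordinates.

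The heart of the proof is to verify that enlarging the support by placing each common edge into its \emph{own} support pair $(A_j,B_j)$ on both sides (convention (a)) and declaring $\norm{A_j}$ negative (convention (b)) causes the unified formulas to reproduce $(1-\lambda)|e|_X + \lambda|e|_T$ for those edges. I would first check the pure case $A_j = B_j = \{e\}$ with $e \in C$: a direct substitution into \eqref{eq:edgelength2} using the signed value of $\norm{A_j}$ produces a sign cancellation in the numerator, so that both the $e \in A_j$ branch and the $e \in B_j$ branch evaluate to $(1-\lambda)|e|_X + \lambda|e|_T$. Moreover, convention (b) makes the ratio $\norm{A_j}/\norm{B_j}$ negative, so, together with the sentinels from convention (c), the interval in \eqref{eq:legs2} correctly places the common-edge pair in the support of every leg it should occupy. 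Because any common edge $e \in A_j \cap B_j$ automatically lies in the leg edge set $B_1 \cup \cdots \cup B_i \cup A_{i+1} \cup \cdots \cup A_k$ for every $i$ (via $A_j$ when $j > i$ and via $B_j$ when $j \leq i$), the common edges persist throughout the path with the desired interpolated lengths.

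The main obstacle is the bookkeeping for support pairs that mix common and non-common edges. I would rule these out by a refinement argument: if $(A_j, B_j)$ contained both a common edge $e$ and a non-common edge, then applying the compatibility half of (P3) to the partition that separates $e$ out forces the ratios of the two halves to coincide, so $(A_j, B_j)$ may be replaced by two adjacent support pairs with equal ratios \textemdash{} one purely common, the other purely non-common \textemdash{} without changing the geodesic. This reduces the general verification to the two cases already handled. Finally, the length formula \eqref{eq:pathlength2} follows by summing Euclidean leg lengths and applying the Pythagorean identity orthant by orthant: under convention (b) the contribution $(\norm{A_i} + \norm{B_i})^2$ of a singleton common-edge pair equals $(|e|_T - |e|_X)^2$, which is precisely the squared Euclidean displacement of a linearly interpolated common edge. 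Uniqueness of the resulting geodesic is inherited from the global NPC property of $\cT_n$ already invoked at the start of Section~\ref{ss:geodesics}.
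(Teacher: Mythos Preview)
Your proposal is correct and follows essentially the same route as the paper: both arguments invoke \cite[Theorem~2.4]{OwenProvan10} for the edges not shared by $X$ and~$T$, and then verify by direct substitution that conventions (a)\textendash(c) force the common-edge coordinates to collapse to the linear interpolation~\eqref{eq:common_edges} and to contribute $(|e|_T-|e|_X)^2$ to~\eqref{eq:pathlength2}.

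Two small points where the paper's write-up differs from yours.  First, the paper does not insist on singleton common-edge pairs: it allows several common edges with identical ratio $|e|_X/|e|_T$ to occupy one support pair, and carries the verification of~\eqref{eq:edgelength2} and~\eqref{eq:pathlength2} through in that generality.  Your singleton reduction is harmless, but note that your ``refinement'' step is phrased slightly off: under convention~(b), applying~(P3) with $I_2=\{e\}$ and $J_1=\{e\}$ (both compatibilities hold because $e$, being common, is compatible with every other edge of either tree) yields a \emph{contradiction} when the remaining edges are non-common, since the common side has negative ratio and the non-common side positive.  So mixed pairs are in fact excluded by the hypotheses, rather than merely refinable.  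Second, you omit the boundary case handled in the paper's last paragraph: edges lying in only one of $X,T$ but compatible with every edge of the other.  These land in support pairs of the form $(A_i,\nothing)$ or $(\nothing,B_i)$ with ratio $0$ or~$\infty$, and one must check that~\eqref{eq:legs2}\textendash\eqref{eq:pathlength2} still read correctly there; the sentinels of convention~(c) are what make this work.
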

\begin{proof}
The presentation in this theorem matches that of the original
Theorem~2.4 in \cite{OwenProvan10} except for the treatment of the
common edges of $X$ and $T$.  Consider any edge $e$ common to $X$ and
$T$.  The definition of a support ensures that $e$ lies in both $A_i$
and $B_i$ for some $i$.  Further, by convention the ratio
$\norm{e}_X/\norm{e}_T$ is negative, so (P2) is never satisfied unless
all of the common edges are placed at the front of the ratio sequence.
This also means that for any $\lambda>0$, each common edge is
contained in some $B_i$ for the computation of its edge length at that
point along the geodesic.  Furthermore, since the common edges are
mutually compatible with each other, they are placed in different
support pairs whenever the ratios $|e|_X/|e|_T$ differ.  It follows
that the common edges are always grouped in support pairs $(A_i,B_i)$
having $\norm{A_i}/\norm{B_i}=- |e|_X/|e|_T$ for any $e$ in that
support pair.

Now consider the length of a common edge $e$ in leg $\geod^i$ of the
path.  By (\ref{eq:edgelength2}),
\begin{align*}
|e|_{T_i}
  &= \displaystyle
      \frac{\tx\lambda\norm{B_j}-(1-\lambda)\norm{A_j}}{\tx\norm{B_j}}|e|_T
      =\left(\lambda - (1-\lambda)\frac{\norm{A_j}}{\norm{B_j}}\right)|e|_T
\\&= \left(\lambda + (1-\lambda)\frac{|e|_X}{|e|_T}\right)|e|_T =
      \lambda|e|_T + (1-\lambda)|e|_X,
\end{align*}
which matches (\ref{eq:common_edges}).

Next look at the term in~(\ref{eq:pathlength2}) corresponding to a
support pair $(A_i,B_i)$ of common edges:
\begin{align*}
\left(\norm{A_j}+\norm{B_j}\right)^2
  &= \left(\frac{\norm{A_j}}{\norm{B_j}}+1\right)^2\norm{B_j}^2
  = \sum_{e\in B_j}\left(1-\frac{|e|_X}{|e|_T}\right)^2|e|^2_T \\
  &= \sum_{e\in B_j}\left(|e|_T-|e|_X\right)^2.
\end{align*}
Summing this over all such pairs $(A_i,B_i)$ yields
$$%
  \sum_{e\in C}\left(|e|_T-|e|_X\right)^2,
$$
where $C$ is the set of common edges.  This matches the expression
given in \cite[Section~4]{OwenProvan10}.

Finally, take the case where an edge $e$ lies in only one of the sets
$X$ and $T$, but is compatible with all edges in the other set.  Intuitively,
we can think of adding $e$ to the other set with length~$0$, and treatin these as
common edges.  Formally, if $e$ lies in~$X$, then it appears in a support
pair $(A_i,\nothing)$ with $A_i$ a set of edges compatible with all of
$T$; and if $e$ lies in~$T$, then it appears in a support pair
$(\nothing,B_i)$ with $B_i$ a set of edges compatible with all of $X$.
Since the ratios of these pairs is either $0$ or~$\infty$,
respectively (since $\norm{\nothing} = 0$), these pairs appear before
and after any nontrivial pairs, respectively.  Further, the edge
component values and path length are as indicated in
(\ref{eq:edgelength2}) and (\ref{eq:pathlength2}), respectively.  This
completes the proof of the theorem.
\end{proof}

\begin{ex}
Figure~\ref{f:treepath} shows an example of the geodesic $\gamma$
between the trees $T$ and $T'$ in Figure~\ref{f:tree_examples} in
Example~\ref{e:tree_examples}.  The associated support $(\cA,\cB)$ for
$\gamma$ has $\cA=\big\{\{e_2,e_3\},\{e_1\}\big\}$ and
$\cB=\big\{\{e_6\},\{e_4,e_5\}\big\}$, and the coordinates of seven
equally spaced trees in $\gamma$ are given in the table.  The length
of this path, as given by~(\ref{eq:pathlength2}), is
$$%
  L(\geod)
  =
  \Big\Arrowvert\big(
  ||\{e_2,e_3\}||+||\{e_6\}||,||\{e_1\}||+||\{e_4,e_5\}||
  \big)\Big\Arrowvert
  =
  15\sqrt 2.
$$

\begin{figure}[ht]
\begin{center}
\includegraphics[width=4.7in]{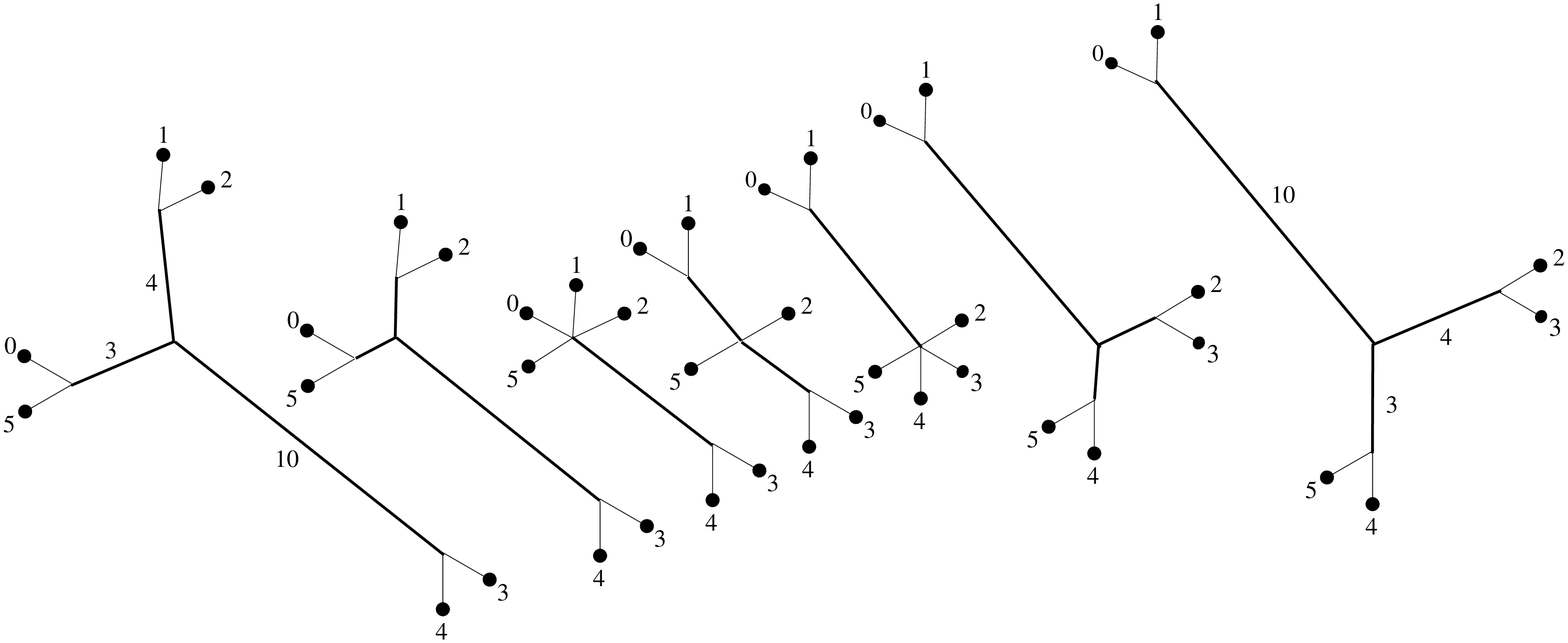}\\[1.5em]
\[\begin{array}{|c|cccccc|}
\multicolumn{1}{c}{}&\multicolumn{6}{c}{\gamma(i/6)}\\
\multicolumn{1}{c}{i}&|e_1|&|e_2|&|e_3|&|e_4|&|e_5|&\multicolumn{1}{c}{|e_6|}\\
\hline
0&10&4&3&0&0&0\\
1&7.5&2&1.5&0&0&0\\
2&5&0&0&0&0&0\\
3&2.5&0&0&0&0&2.5\\
4&0&0&0&0&0&5\\
5&0&0&0&2&1.5&7.5\\
6&0&0&0&4&3&10\\
\hline
\end{array}\]\end{center}
\caption{Seven trees in the geodesic $\gamma$ between $T$ and $T'$,
 sampled at the points $\geod(i/6)$ for $i \in \{0, 1, ..., 6\}$,
The table gives the interior edge lengths for the trees, using the same edge labels as Figure~\ref{f:tree_examples}.}
\label{f:treepath}
\end{figure}
\end{ex}

We end the section by giving a canonical representation for any
geodesic.
\begin{lem}\label{uniquecover}
Any geodesic $\gamma$ can be represented by unique support $(\cA,
\cB)$ satisfying
\begin{equation}\label{eq:strict_inequalities}
\displaystyle
\frac{\norm{A_1}}{\norm{B_1}}
  <
  \frac{\norm{A_2}}{\norm{B_2}}
  <
  \cdots
  <
  \frac{\norm{A_k}}{\norm{B_k}}.
\end{equation}
This support is called the \emph{minimal support}.
\end{lem}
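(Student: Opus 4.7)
The plan is to establish existence by iteratively merging consecutive support pairs whose ratios coincide, and uniqueness by reading off the minimal support directly from the curve~$\gamma$. For existence, start from any support $(\cA,\cB)$ that satisfies (P1)--(P3) and represents $\gamma$ as in Theorem~\ref{t:prelimThm}. If some consecutive ratios agree, say $\norm{A_i}/\norm{B_i} = \norm{A_{i+1}}/\norm{B_{i+1}} = r$, I would replace the pairs $(A_i,B_i)$ and $(A_{i+1},B_{i+1})$ by the single merged pair $(A_i \cup A_{i+1},\; B_i \cup B_{i+1})$. A direct calculation from $\norm{\cdot}^2 = \sum|e|^2$ (with convention~(b) keeping $\norm{\cdot}$ negative on sets of common edges) shows the merged ratio is still~$r$, so (P2) survives: a duplicated ratio has been deleted from an otherwise nondecreasing list. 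Property (P1) carries over because for $j \notin \{i,i+1\}$, compatibility of $(A_i \cup A_{i+1}) \cup B_j$ and of $A_j \cup (B_i \cup B_{i+1})$ reduces edgewise to the two smaller compatibility statements supplied by the original~(P1).

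The delicate step, and the main obstacle, is verifying (P3) for the merged pair, since the inequalities for the two original pairs do not combine mechanically: a partition $A_i \cup A_{i+1} = I_1 \cup I_2$ and $B_i \cup B_{i+1} = J_1 \cup J_2$ can straddle the old split in an arbitrary way. My plan is to avoid a direct combinatorial attack and argue indirectly. When $\norm{A_i}/\norm{B_i} = \norm{A_{i+1}}/\norm{B_{i+1}}$, formula~(\ref{eq:legs2}) shows that the intermediate leg collapses to a point, and substituting the merged data into~(\ref{eq:edgelength2}) yields exactly the same parameterized arc over the union of the affected legs. Hence the merged $(k-1)$-tuple presents the same geodesic $\gamma$ via the leg formulas of Theorem~\ref{t:prelimThm}. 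Since the equivalence between properties (P1)--(P3) and being a geodesic support was established in Theorem~2.4 of \cite{OwenProvan10} (on which Theorem~\ref{t:prelimThm} rests), (P3) must then also hold for the merged pair. Iterating the merge strictly decreases~$k$ and therefore terminates in a support satisfying~(\ref{eq:strict_inequalities}).

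For uniqueness, assume $(\cA,\cB)$ satisfies~(\ref{eq:strict_inequalities}). Then by~(\ref{eq:legs2}) the values $\lambda_i = \norm{A_i}/(\norm{A_i}+\norm{B_i})$ are strictly increasing transition times at which $\gamma$ leaves one maximal orthant and enters the next, and~(\ref{eq:edgelength2}) identifies $A_i$ with the set of edges in $\cE_X$ whose coordinates vanish as $\lambda \nearrow \lambda_i$ and $B_i$ with the set of edges in $\cE_T$ whose coordinates become positive as $\lambda \searrow \lambda_i$. These data depend only on the curve $\gamma$, so any two minimal supports for the same geodesic coincide.
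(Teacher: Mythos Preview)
Your argument is correct and follows the same strategy as the paper's proof: merge consecutive support pairs with equal ratios and verify that the resulting support still represents~$\gamma$, then observe that a strictly increasing ratio sequence is determined by the leg structure of the curve. The paper's version is terser---it invokes the path-length formula~(\ref{eq:pathlength2}) to see that the merged path has the same length as~$\gamma$ and hence equals it, while you instead verify the edge-length formula~(\ref{eq:edgelength2}) directly and appeal to the necessity direction of \cite[Theorem~2.4]{OwenProvan10} for~(P3); both routes are valid and essentially equivalent. Your uniqueness argument, reading off the $A_i$ and $B_i$ from which coordinates vanish or appear at each transition time, is a slightly more explicit form of the paper's ``different legs'' observation.
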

\begin{proof}
This is the content of the remark in \cite[Section~2.3]{OwenProvan10}.
The basic argument is as follows.  Any support $(\cA', \cB')\neq(\cA,
\cB)$ of form (\ref{eq:strict_inequalities}) results in a different
geodesic, since by~(\ref{eq:legs2}) they have different legs.  On the
other hand, for any representation of $\gamma$ having equalities in
the ratio sequence, combine the respective sets in every equality
subsequence.  The resulting support continues to satisfy~(P2), and
hence there is a shortest piecewise linear path from $X$ to~$T$
through the prescribed orthants.  Further, from (\ref{eq:pathlength2})
it follows that the length of this path equals that of~$\gamma$, and
hence defines the unique geodesic~$\gamma$.
\end{proof}

\begin{rem}\label{r:common_edges}
Theorem~\ref{t:prelimThm} positions the support pairs corresponding to
edges compatible with both trees into (\ref{eq:strict_inequalities})
as follows.
\begin{enumerate}[(i)]
\item%
The set $N_X$ of edges of $X$ that are not in $T$ but are compatible
with all edges of $T$ is the set~$A_0$, with $B_0 = \nothing$ and
ratio $\frac{\norm{A_0}}{\norm{B_0}}=\frac{\norm{N_X}}{\norm\nothing}
= -\infty$.
\item%
The set $N_T$ of edges of $T$ that are not in $X$ but are compatible
with all edges of~$X$ is the set $B_k$ with $A_k=\nothing$, and so
its ratio is $\frac{\norm{A_k}}{\norm{B_k}} =
\frac{\norm\nothing}{\norm{N_T}} = 0$.
\item%
Any edge $e$ that lies in both $X$ and $T$ (and hence has positive
length in both sets) appears in both sets of some support pair
$(A_i,B_i)$, and so the ratio is \mbox{$-\infty <
\frac{\norm{A_i}}{\norm{B_i}} < 0$}.
\item%
All other support pairs have $\frac{\norm{A_i}}{\norm{B_i}} > 0$, so
both sets in the support pair are~nonempty.
\end{enumerate}
The ordering of the support pairs in (i)\textendash(iii) has no effect on the
structure of the geodesic between $X$ and~$T$, so for the remainder of
the paper we take the ratio sequence for a geodesic to represent only
the \emph{positive}\/ ratios in the sequence.
\end{rem}

%%%%%%%%%%%%%%%%%%%%%%%%%%%%%%%%%%%%%%%%%%%%%%%%%%%%%%%%%%%%%%%%%%%%%%
\section{The mean and variance in tree space}\label{s:mean}%%%%%%%%%%%
%%%%%%%%%%%%%%%%%%%%%%%%%%%%%%%%%%%%%%%%%%%%%%%%%%%%%%%%%%%%%%%%%%%%%%

Given a finite point set $\bm T = \{T^1,\ldots,T^r\}$ of trees in
$\cT_n$, the \emph{mean} of $\bm T$, alternatively known as the
\emph{Fr\'echet mean}\/ or \emph{barycenter}, is the tree $\ol
T\in\cT_n$ that minimizes the sum $S(X,{\bm T})$ of the squares of the
distances from~$X$ to the points in~$\bm T$.  The \emph{variance} of
$\bm T$ is $S(X,{\bm T})/r$.  Since $r$ is constant throughout the
following discussion, we abuse notation and henceforth refer to the
variance as simply $S(X,{\bm T})$.  The motivation for considering
these notions of mean and variance as the appropriate statistical
objects in tree space was given by Sturm \cite{sturm}, who established
the mathematical foundations for probability theory on global NPC
spaces.  This section reviews the required basics of Sturm's geometric
methods in the context of tree spaces.  (Readers interested in
Fr\'echet means of more general distributions, and those in arbitrary
global NPC spaces, should read Section~\ref{s:npc} now to put the
material below in these more general settings.)

% ; we postpone additional details and
%generality until Section~\ref{s:npc}, which considers more general NPC
%spaces.  In contrast, this section is limited to a precise account of
%mean and variance for trees (Section~\ref{sub:meanTree}) and an
%algorithm to calculate these based on Sturm's method of approximating
%barycenters of probability distributions (Section~\ref{sub:sturmAlg}).

%%%%%%%%%%%%%%%%%%%%%%%%%%%%%%%%%%%%%%%%%%%%%%%%%%%%%%%%%%%%%%%%%%%%%%
\subsection{The variance function}\label{sub:meanTree}%%%%%%%%%%%%%%%%

Let $T \in \cT_n$ be a fixed tree, and consider the geodesics from $T$
to a variable tree $X \in \cT_n$.  The tree $X$ can be thought of as a
vector in~$\RR^\cE_+$, whose coordinates are expressed using the
corresponding lower-case letter~$x$.  If the geodesic from~$X$ to~$T$
has support pair $(\cA,\cB)$ as in Theorem~\ref{t:prelimThm}, then the
squared distance $d(X,T)^2$ from $X$ to $T$ is expressed as the
function
\begin{equation}\label{eq:S}
  S_{T}(x)
  =
  \sum_{i=1}^k\big(\norm{x_{A_i}}+\norm{B_i}\big)^2
  %+\sum_{e\in C}(x_e-|e|_{T})^2.
\end{equation}
in which $x_{A_i}$ is the vector whose coordinates are restricted to
edges in $A_i$.  It follows that for a set ${\bm T}=\{T^1,\ldots,T^r\}
\subseteq \cT_n$ of trees, the variance function $S(X,{\bm T})$ can be
written
\begin{equation}\label{d:var}
  S(x) :=  S(X,{\bm T})= \sum_{\ell=1}^r S_{T^\ell}(x).
\end{equation}
Thus the mean $\ol T$ can be thought of as the point $x^*$ that
minimizes $S(x)$ over $x \in\cT_n$.

To state the next result, a real-valued function $f: \cT \to \RR$ on a
metric space~$\cT$ is \emph{strictly convex} if~$f \circ \gamma$ is a
strictly convex real-valued function on~$\RR$ for all
geodesics~$\gamma$; that is, if
$$%
  f\big(\gamma(\lambda)\big)
  <
  (1-\lambda)f\big(\gamma(0)\big) + \lambda f\big(\gamma(1)\big)
  \text{ whenever } 0 < \lambda < 1.
$$

\begin{prop}\label{p:convex}
The variance function $S(x)$ is strictly convex as a function
on~$\cT_n$.  Consequently, the mean is the unique local minimum
of~$S(x)$ in~$\cT_n$.
\end{prop}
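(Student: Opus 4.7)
My plan is to deduce the proposition from the defining property of global NPC (CAT(0)) spaces via two steps: strict convexity of each squared-distance summand propagates to the sum, and then strict convexity of $S$ forces uniqueness of the local minimum.

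First, I would invoke the Reshetnyak (quadruple) inequality, one of the standard equivalent characterizations of a global NPC space and the form used throughout Sturm's framework~\cite{sturm}: for every fixed $p\in\cT_n$ and every geodesic $\gamma:[0,1]\to\cT_n$,
\[
  d\big(\gamma(\lambda),p\big)^2
  \leq (1-\lambda)\,d\big(\gamma(0),p\big)^2
  + \lambda\,d\big(\gamma(1),p\big)^2
  - \lambda(1-\lambda)\,d\big(\gamma(0),\gamma(1)\big)^2.
\]
Applied with $p = T^\ell$, this gives convexity of $S_{T^\ell}$ along $\gamma$, and strict convexity whenever $\gamma$ is non-constant, because the correction term $\lambda(1-\lambda)\,d(\gamma(0),\gamma(1))^2$ is strictly positive for $\lambda\in(0,1)$. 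Strict convexity is preserved under summation, so $S = \sum_{\ell=1}^r S_{T^\ell}$ is strictly convex on $\cT_n$.

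For the uniqueness statement, I would argue as follows. Suppose $x^*$ is a local minimum of $S$ and $y\neq x^*$ is any other point. Because $\cT_n$ is global NPC, there is a unique geodesic $\gamma$ from $x^*$ to $y$ lying in $\cT_n$, and strict convexity along $\gamma$ gives $S(\gamma(\lambda)) < (1-\lambda)S(x^*) + \lambda S(y)$ for $\lambda\in(0,1)$. If $S(y)\leq S(x^*)$, this forces $S(\gamma(\lambda)) < S(x^*)$ for $\lambda$ arbitrarily close to $0$, contradicting local minimality of $x^*$. Hence $S(y) > S(x^*)$ for all $y\neq x^*$, so $x^*$ is the unique (global) minimum, namely the Fr\'echet mean $\ol T$.

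The main ``obstacle'' is really only a citation: the argument relies entirely on the CAT(0) squared-distance inequality, which is built into the global NPC framework of Sturm referenced in this section, so it is available without further work. A self-contained alternative would be to derive the inequality on $\cT_n$ from the explicit geodesic description in Theorem~\ref{t:prelimThm}, but this is unnecessary given the general theory.
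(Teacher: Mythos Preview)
Your proof is correct and is essentially the content of the citation the paper gives: the paper's proof is simply a reference to \cite[Proposition~1.7]{sturm} (with a pointer to Example~\ref{e:dconvex}), and what you have written is precisely an unpacking of that result, using the NPC squared-distance inequality to obtain strict convexity of each summand and hence of~$S$, followed by the standard strict-convexity argument for uniqueness of the local minimum. There is nothing missing or different in approach.
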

\begin{proof}
\cite[Proposition 1.7]{sturm}.  See also Example~\ref{e:dconvex}.
\end{proof}

The differentiability of the variance function $S$ is critical to the
construction of gradient-descent methods for minimizing $S$.  This in
turn depends on the differentiability of the individual functions
$S_T$ in (\ref{d:var}).  Identifying the geodesic from $X$ to~$T$ by
its support $(\cA,\cB)$, Eq.~\eqref{eq:S} yields the partial
derivatives of $S_T$ with respect to each of the coordinates~$x_e$:
\begin{align}\label{eq:partialS}\nonumber
\frac{\partial S_T(X)}{\partial x_e}
&= 2\big(\norm{x_{A_i}} + \norm{B_i}\big) \frac{x_e}{\norm{x_{A_i}}}
\\
&= 2x_e\Big(1 + \frac{\norm{B_i}}{\norm{x_{A_i}}}\Big),
\end{align}
where $A_i$ is the set containing $e$.  This is well-defined whenever
$x$ lies in the interior of its maximal orthant, although the
functional form of (\ref{eq:partialS}) depends upon the combinatorial
type of the geodesic, and in particular on $(\cA,\cB)$.  It turns out,
however, that throughout the interior of any maximal orthant the
function $S$ is continuously differentiable.

\begin{thm}\label{t:diff}
The variance function $S(x)$ is continuously differentiable on the
interior of every maximal orthant~$\Or$.
\end{thm}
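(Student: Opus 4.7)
The plan is to reduce to a single tree, observe that formula~(\ref{eq:partialS}) is smooth within each vistal cell, and match these partial derivatives across cell boundaries inside $\mathrm{int}(\Or)$. Since $S = \sum_{\ell=1}^r S_{T^\ell}$ and continuous differentiability is closed under finite sums, it suffices to show each $S_T$ is continuously differentiable on $\mathrm{int}(\Or)$. Fix $T$. On the open dense set where the minimal support $(\cA,\cB)$ of the geodesic from $X$ to $T$ is locally constant \textemdash{} namely the interior of each vistal cell from Theorem~\ref{t:vistal_face_combinatorics} \textemdash{} formula~(\ref{eq:S}) expresses $S_T$ as a polynomial in the functions $\norm{x_{A_i}}$, each of which is smooth on $\mathrm{int}(\Or)$ (since $A_i \ni e$ and $x_e > 0$ force $\norm{x_{A_i}} > 0$). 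The partial derivatives are then given by~(\ref{eq:partialS}) and are smooth within each cell.

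The crux is matching these partial derivatives across boundaries separating adjacent vistal cells inside $\mathrm{int}(\Or)$. By Lemma~\ref{uniquecover}, the minimal support can change only when either (i) two consecutive pairs $(A_i,B_i)$ and $(A_{i+1},B_{i+1})$ develop equal ratios (equality in (P2)), or (ii) some pair $(A_i,B_i)$ admits a non-trivial partition $(I_1,J_1), (I_2,J_2)$ with $\norm{I_1}/\norm{J_1} = \norm{I_2}/\norm{J_2}$ and $I_2 \cup J_1$ compatible (equality in (P3)). In either case the two supports valid at the boundary share a common ratio $\rho$. For case (i), an edge $e \in A_i$ has partial derivative $2x_e(1 + \norm{B_i}/\norm{x_{A_i}}) = 2x_e(1 + \rho^{-1})$ under the unmerged support, while under the merged pair $(A_i \cup A_{i+1}, B_i \cup B_{i+1})$, the identities $\norm{B_i}^2 = \rho^{-2}\norm{x_{A_i}}^2$ and $\norm{B_{i+1}}^2 = \rho^{-2}\norm{x_{A_{i+1}}}^2$ at the boundary give
\[
  \frac{\norm{B_i \cup B_{i+1}}}{\norm{x_{A_i \cup A_{i+1}}}} = \frac{\sqrt{\norm{B_i}^2 + \norm{B_{i+1}}^2}}{\sqrt{\norm{x_{A_i}}^2 + \norm{x_{A_{i+1}}}^2}} = \rho^{-1},
\]
so the merged formula yields the same value $2x_e(1 + \rho^{-1})$. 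Case (ii) is handled identically after replacing $(A_i,B_i),(A_{i+1},B_{i+1})$ by $(I_1,J_1),(I_2,J_2)$.

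The main obstacle is therefore the matching step: enumerating the ways the minimal support can degenerate along a vistal-cell boundary and verifying derivative continuity in each. The polyhedral description of vistal cells from Theorem~\ref{t:vistal_face_combinatorics} reduces this to codimension-one transitions of types (P2) and (P3), where the algebraic collapse to a common ratio $\rho$ makes the matching routine; higher-codimension boundaries then inherit continuity by an immediate limit argument.
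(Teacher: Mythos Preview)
Your proof is correct and rests on the same computation as the paper: when a support pair is split or merged at an equality in (P2) or (P3), the ratio $\norm{B_i}/\norm{x_{A_i}}$ appearing in~(\ref{eq:partialS}) is unchanged, so the partial derivatives agree.  The paper packages this more economically: rather than enumerating codimension-one transitions between vistal cells, it simply observes that at \emph{any} point~$X$, every support representing the geodesic from~$X$ to~$T$ is obtained from the minimal support by refining pairs into equality subsequences, and within an equality subsequence the ratio $\norm{B_j}/\norm{A_j}$ equals that of the unrefined pair.  Hence formula~(\ref{eq:partialS}) gives the same value regardless of which valid support one computes with, and continuity follows at once from continuity of the formula on the (closed) region where each fixed support is valid.

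The practical differences are two.  First, the paper's argument is self-contained at this point in the exposition: it needs only Lemma~\ref{uniquecover} and the structure of supports, whereas you invoke the vistal-cell machinery of Theorem~\ref{t:vistal_face_combinatorics}, a forward reference (not circular, but avoidable).  Second, your framing as ``match across codimension-one faces, then take limits for higher codimension'' is more elaborate than needed: the representation-independence argument handles all points uniformly without any case analysis or limiting step.  Your explicit computation with the common ratio~$\rho$ is, however, exactly what underlies the paper's one-line claim that the ratio is preserved, so the mathematical content is the same.
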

\begin{proof}%
{}From (\ref{d:var}) it suffices to show that the
function~(\ref{eq:partialS}) is continuously differentiable on the
interior of~$\Or$.  By Lemma~\ref{uniquecover} the geodesic between
$X$ and $T$ can be represented uniquely by support $(\cA^0,\cB^0)$
satisfying (\ref{eq:strict_inequalities}).  Any other support
$(\cA,\cB)$ for this geodesic consists of a sequence of sets
partitioning the $A^0_i$ and $B^0_i$ into equality subsequences, with
the ratios $\norm{A_j}/\norm{B_j}$ in the equality subsequence equal
to the corresponding ratio $\norm{A^0_i}/\norm{B^0_i}$ of the sets
from which they were partitioned.  But this means that the ratio
$\norm{B_i}/\norm{x_{A_i}}$, and hence $\partial S_T(X)/\partial x_e$,
is the same regardless of which representation we choose for the
geodesic.  It follows that the partial derivatives are continuous
everywhere in the interior of~$\Or$.
\end{proof}

%%%%%%%%%%%%%%%%%%%%%%%%%%%%%%%%%%%%%%%%%%%%%%%%%%%%%%%%%%%%%%%%%%%%%%
\subsection{Sturm's algorithm}\label{sub:sturmAlg}%%%%%%%%%%%%%%%%%%%%

The orthant structure of tree space~$\cT_n$ prevents the averaging of
finite point sets using the standard Euclidean centroid.  The
following serves as an approximate replacement, introduced by Sturm
\cite[Definition~4.6]{sturm} in the context of probability theory on
arbitrary globally nonpositively curved spaces.

\begin{defn}\label{d:indMean}
For a set $X^1,X^2,\ldots$ of points in $\cT_n$ and an index~$k$, the
\emph{inductive mean value} of $X^1,\ldots,X^k$ is the point $\mu_k$
%$$%
%  \mu_k
%  =
%  \displaystyle\frac1k\sum_{\ell=1,\ldots,k}^{\longrightarrow} X^\ell
%$$
defined by setting $\mu_1 = X^1$ and for $\ell=2,\ldots,k$, letting
$\mu_\ell$ be the point $\gamma_{1/{\ell}}=\gamma(1/{\ell})$ that is
$1/{\ell}$ along the geodesic $\gamma$ from $\mu_{{\ell}-1}$ to
$\gamma_1 = X^{\ell}$.
\end{defn}

Note that if all of the $X^{\ell}$ lie in the same orthant, then the
inductive mean value of $X^1,\ldots,X^k$ in fact equals the standard
centroid of $X^1,\ldots,X^k$.  For general points in $\cT_n$, though,
the inductive mean may not be the Fr\'echet mean; it may in fact give
different points for different orderings of the
points~$X^1,\ldots,X^k$ (see Example~\ref{e:indMean}).  Sturm goes on
to prove \cite[Theorem~4.7]{sturm} the following \emph{strong law of
large numbers} for the Fr\'echet mean.

\begin{thm}\label{t:sturmPoints}
Fix a set $\{T^1,\ldots,T^r\} \subseteq \cT_n$ of trees.  If
$X^1,X^2,\ldots$ is a sequence of points sampled uniformly and
independently from $\{T^1,\ldots,T^r\}$, then with probability~$1$,
the sequence of inductive mean values $\mu_1,\mu_2,\ldots$ approaches
the mean $\ol T$ of $\{T^1,\ldots,T^r\}$.
%; that is,
%$$%
%  \frac1k\sum_{\ell=1,\ldots,k}^{\longrightarrow}X^\ell \to \ol T.
%$$
\end{thm}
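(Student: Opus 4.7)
The plan is to invoke Sturm's strong law of large numbers \cite[Theorem~4.7]{sturm} for the inductive mean of i.i.d.\ samples from a probability measure on a global NPC space, specialized to the empirical measure $\nu = \frac{1}{r}\sum_{\ell=1}^r \delta_{T^\ell}$ on tree space. The hypotheses of that theorem hold trivially here: $\cT_n$ is global NPC by the theorem of Billera, Holmes, and Vogtmann recalled in Section~\ref{ss:geodesics}, and $\nu$ is compactly supported (hence has finite second moment), so the Fr\'echet mean $\ol T$ exists and is unique by Proposition~\ref{p:convex}.

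To indicate how Sturm's argument runs in this setting, I would highlight two ingredients. The first is the CAT(0) (quadrilateral) inequality applied to the geodesic $\gamma$ from $\mu_{k-1}$ to $X^k$, so that $\mu_k = \gamma(1/k)$:
$$
  d(\ol T, \mu_k)^2
  \leq \bigl(1 - \tfrac{1}{k}\bigr)\, d(\ol T, \mu_{k-1})^2
  + \tfrac{1}{k}\, d(\ol T, X^k)^2
  - \tfrac{1}{k}\bigl(1 - \tfrac{1}{k}\bigr)\, d(\mu_{k-1}, X^k)^2.
$$
The second is the NPC variance inequality satisfied by the Fr\'echet mean: for any $w \in \cT_n$,
$$
  \int d(w, x)^2\, d\nu(x)
  \;\geq\;
  \int d(\ol T, x)^2\, d\nu(x) + d(w, \ol T)^2.
$$

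Taking the conditional expectation of the CAT(0) inequality given $\cF_{k-1} = \sigma(X^1,\ldots,X^{k-1})$ and applying the variance inequality with $w = \mu_{k-1}$ to control the expectation of the $d(\mu_{k-1}, X^k)^2$ term, one extracts a recursion of the shape
$$
  \mathbb{E}\bigl[d(\ol T, \mu_k)^2 \,\bigm|\, \cF_{k-1}\bigr]
  \;\leq\;
  \bigl(1 - \tfrac{1}{k}\bigr)^{\!2} d(\ol T, \mu_{k-1})^2
  + \tfrac{1}{k^2}\, V,
$$
where $V = \int d(\ol T, x)^2\, d\nu(x)$. Unrolling this inequality yields the deterministic bound $\mathbb{E}[d(\ol T, \mu_k)^2] = O(1/k)$, which already gives convergence in $L^2$.

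The final step is to upgrade $L^2$ convergence to almost sure convergence. The standard device is to form a nonnegative supermartingale whose leading term is $d(\ol T, \mu_k)^2$ plus a summable deterministic correction (obtained by absorbing the $V/k^2$ remainder), and then to apply Doob's supermartingale convergence theorem together with the $L^2$ estimate above to identify the almost sure limit as zero. The main obstacle is the bookkeeping needed to massage the one-sided CAT(0) bound into an honest supermartingale recursion; once that is in place the rest is general NPC machinery from \cite{sturm} and requires no tree-specific input.
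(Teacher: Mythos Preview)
Your proposal is correct and matches the paper's approach exactly: the paper does not prove Theorem~\ref{t:sturmPoints} independently but simply invokes \cite[Theorem~4.7]{sturm}, noting that tree space is global NPC and the measure has finite second moment. Your sketch of Sturm's argument (CAT(0) inequality, variance inequality, supermartingale upgrade) goes beyond what the paper records but is faithful to Sturm's proof.
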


To be precise, Sturm shows that if we take the inductive mean $\mu_k$
as a random variable dependent on the sampling of the points~$X^\ell$,
then the distance $d(\mu_k,\ol T)$ from~$\mu_k$ to the true mean~$\ol
T$ has expected value bounded above by $S({\bm T},\ol T)/k$.  This
gives us a way of estimating the Fr\'echet mean $\ol T$ through a
sequence of inductive means $\mu_1,\mu_2,\ldots$ obtained by randomly
sampling trees from the set $\{T^1,\ldots,T^r\}$.

\vbox{
\begin{alg}[Sturm's algorithm]\label{a:sturm}
\end{alg}
\begin{alglist}
\begin{routinelist}{input}
    \item[a set $\{T^1,\ldots,T^r\}$ of trees in $\cT_n$]
    \item[positive integers $K$ and $N$]
    \item[positive real number $\varepsilon$ ]
\end{routinelist}
\routine{output} $\mu_k = k^{\text{th}}$ approximation of the mean tree
\begin{routinelist}{initialize}
\item [choose a tree $T \in \{T^1,\ldots,T^r\}$ uniformly at random]
\item [set $\mu_1 := T$]
\item [set $k := 1$]
\end{routinelist}
\routine{while} $k<K$ or pairwise distances $d(\mu_j, \mu_\ell)$ for $k - N <
j,\ell \leq k$ are not all $\leq \varepsilon$
\begin{routinelist}{do}
    \item [choose tree $T \in \{T^1,\ldots,T^r\}$ uniformly at random]
    \item [set $\gamma$ := the geodesic from $T$ to $\mu_k$]
    \item [set $\mu_{k+1} := \gamma_{1/(k + 1)}$]
    \item [set $k := k + 1$]
\end{routinelist}
\routine{end}{}\procedure{while-do}
\routine{return} $\mu_k$, the $k^{th}$ approximation of the mean tree
\end{alglist}
}

\begin{rem}\label{r:precision}
The choice of stopping criterion involves two parts.
\begin{enumerate}[(i)]
\item%
Running the algorithm a specified initial number $K$ of iterations
guarantees an upper bound of $\frac{r}{K+1}S(\ol T,{\bm T})$ on the
expected distance of the final tree $\mu_k$ to the mean~$\ol T$.  This
is derived from the proof of Theorem 4.7 in \cite{sturm}.
\item%
Comparing the final $N$ sample means
serves as a proxy for testing that the sample means
act like a Cauchy sequence for $N$ steps.
\end{enumerate}
Thus in principle, proper settings for $K$, $N$, and $\varepsilon$
could be used to set confidence intervals on the distance $d(\mu_i,\ol
T)$ by using Sturm's result.  This would involve a more sophisticated
statistical analysis, which we did not undertake in this paper.  
In practice, we chose $K$, $N$, and $\varepsilon$ to balance
run-time with the desired precision, working under the rough
assumption that if $N$ is chosen
large enough, then the approximate mean will be within $\varepsilon$ of 
the mean tree.  For Example~\ref{e:gene}, we chose $K = 1\,000\,000$, $N = 10$, and 
$\varepsilon=10^{-4}$.
%For
%our experiments, such as those reported in 
%%Examples~\ref{e:brain} and~
%\ref{e:gene}, we chose $K = 1\,000\,000$ and $N = 5$.  We either
%fix~$\varepsilon$ to be some value, as in Example~\ref{e:brain}, or we
%calculate it based on the square root of the sample variance of the
%sample mean tree $\mu_{5r}$.  In Example~\ref{e:gene}, for instance,
%we use $\varepsilon=10^{-5}(S(\mu_{5r},{\bm T}))^{1/2}$.  If a
%function is utilized to determine any of these parameters, then it
%should depend on the size~$r$ of the set of trees to be averaged.
\end{rem}

\begin{rem}\label{r:implement}
We have made software implementing this algorithm freely available
\cite{Owen12}.
\end{rem}

%%%%%%%%%%%%%%%%%%%%%%%%%%%%%%%%%%%%%%%%%%%%%%%%%%%%%%%%%%%%%%%%%%%%%%
\section{The combinatorics of geodesics in tree space}\label{s:vistal}
%%%%%%%%%%%%%%%%%%%%%%%%%%%%%%%%%%%%%%%%%%%%%%%%%%%%%%%%%%%%%%%%%%%%%%

This section investigates the combinatorial structure of geodesics
in~$\cT_n$ and their relationship to the variance function.  To be
more precise, fix a source %
%
%\footnote{The terminology comes from studies of polyhedral unfolding
%\cite{unfolding}, where the unmoving point $T$ emits a signal at unit
%speed, so at time $d(X,T)$ the wavefront passes through~$X$.}
%
tree $T \in \cT_n$.  The shortest path from an arbitrary tree $X \in
\cT_n$ to~$T$ has a ``combinatorial type'', determined through
Theorem~\ref{t:prelimThm} by the sequence of orthants that it passes
through, or more specifically the support pair $(\cA,\cB)$ associated
with the geodesic.  This combinatorial type can change, even when $X$
has the same topology, depending on the precise values of the lengths
of the edges in $X$.  We are interested in the partition of
$\cT_n$ \textemdash{} called%
\footnote{Our use of the term ``vistal subdivision'' here differs from
\cite[Conjecture~9.6]{unfolding}: vistal facets in
Definition~\ref{d:vistal} here are analogous to \emph{cut cells} in
\cite[Definition~5.4]{unfolding}.  In contrast, the equivalence
relation in
% \cite[Conjecture~9.6]{unfolding}
\cite{unfolding} declares two points \emph{equivistal} when their
vistal subdivisions---in the sense of
Theorem~\ref{t:cell_complex}---are combinatorially the same.}
the \emph{vistal subdivision} of $\cT_n$---into regions for which the
geodesics to the fixed tree $T$ have the same combinatorial type.

We begin by describing a simple change of coordinates, the
\emph{squaring map}, and characterizing the faces of maximal dimension
in the vistal subdivision (Section~\ref{sub:vistal}).  In particular,
Propositions~\ref{p:cone} and \ref{p:polyhedral} establish that after
applying the squaring map the vistal facets are polyhedral regions
that cover tree space but have disjoint interiors.  Next we provide a
simple description of the faces of lower dimension in these polyhedra
(Section~\ref{sub:lowerdim}).  Finally, we prove that the vistal
facets constitute the maximal cells of a polyhedral subdivision of
tree space, called the \emph{vistal polyhedral subdivision}
(Section~\ref{sub:vistalSub}).

\begin{rem}
The idea of studying the paths taken by geodesics emanating from a 
source point has been studied in computational geometry, in the areas
of single source shortest path queries \cite{Mitchell00} and polyhedral unfolding
\cite{unfolding}.  Recently Chepoi and Maftuleac studied the single source
 shortest path problem for CAT(0) rectangular
complexes, where each cell is a 2D rectangle.  When the underlying 
space is intrinsically 2D, these shortest path subdivisions are often polyhedral.
However, in general, we can not expect this in higher dimensions, and indeed, 
without the squaring map, the vistal subdivisions are not polyhedral,
as illustrated in Example~\ref{e:subdivision}.  The squaring map is only possible
in $\cT_n$ because all of the combinatorial complexity of the space happens
about the origin. 
\end{rem}

\begin{ex}\label{e:4dim}
Vistal subdivisions are in general far from polyhedral, even after
changes of coordinates such as squaring.  A prerequisite for a
squaring map to produce polyhedrality would be that (every component
of the) the bounding hypersurface has degree at most~$2$.  However,
global NPC cubical complexes can have vistal cells bounded by
hypersurfaces of degree greater than~$2$.  We conjecture that the
bounding hypersurface can have components of arbitrarily high degree.

For a specific example, consider first the arrangement
$$%
  \includegraphics[height=30ex]{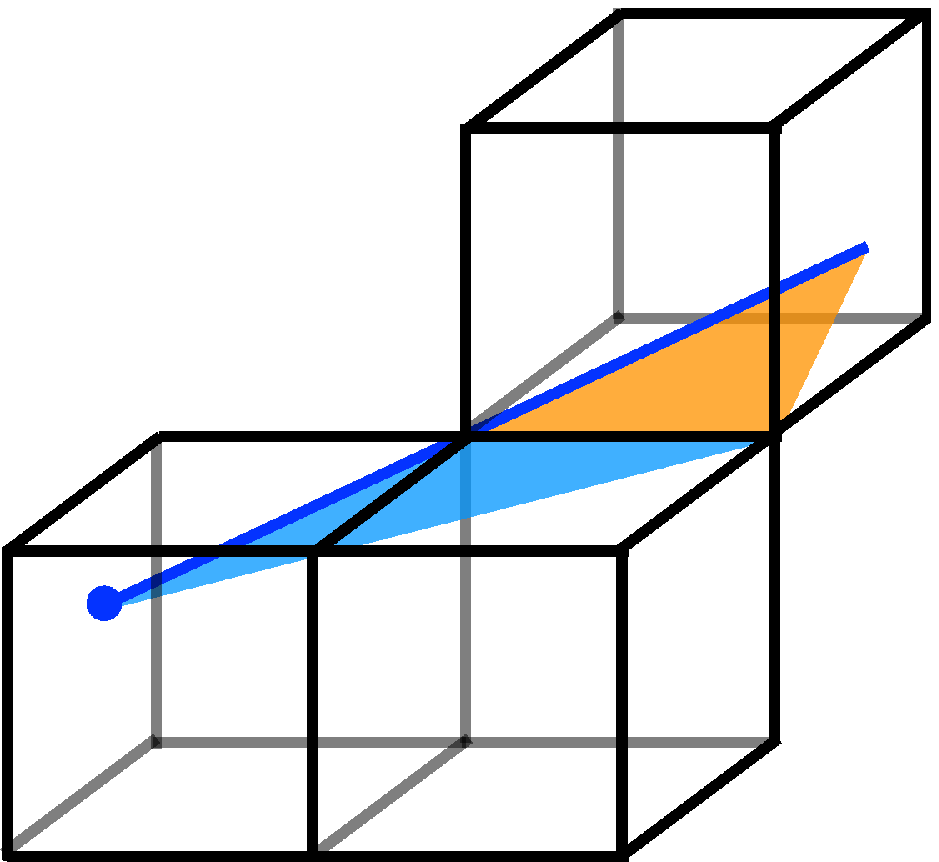}
$$
of three $3$-dimensional cubes in which the bottom two cubes are
joined along a common face, the top cube meets one bottom cube along
an edge, and all three cubes meet at a central vertex.  Consider the
set~$\cV$ of points whose shortest path back to a fixed starting point
(the dot in the left bottom cube) passes through the interior of the edge joining the
top cube to the bottom cubes, as opposed to passing through the
central vertex.  The set~$\cV$ is obtained by rotating the top shaded
triangle about the shared cube edge.  The boundary of~$\cV$ is a cone
and hence is described as the vanishing locus (in the top cube) of a
polynomial of degree~$2$.

To get the desired example, glue a $4$-cube to the right-hand $2$-face
of the top cube in the figure.  If it were merely a $3$-cube glued on,
then the cone~$\cV$ would simply continue to expand into the added
cube, creating a frustum in the added $3$-cube.  But once the new
facet is a $4$-cube, the frustum rotates freely around the shared
$2$-face.  The boundary of such a rotated frustum is the locus of
zeros (in the $4$-cube) of a polynomial, but any such polynomial~$f$
has degree at least~$4$.  Indeed, intersecting the hypersurface in
question with a generic hyperplane yields a union of two cones,
% (any 2-plane that cuts the boundary of the frustum into two lines
% rotates around the 2-face to yield two cones)
each of which has degree~$2$.  Therefore $f$ restricts to a polynomial
of degree at least~$4$ on the hyperplane, whence the hypersurface
itself has degree at least~$4$.
\end{ex}

%%%%%%%%%%%%%%%%%%%%%%%%%%%%%%%%%%%%%%%%%%%%%%%%%%%%%%%%%%%%%%%%%%%%%%
\subsection{Vistal facets}\label{sub:vistal}%%%%%%%%%%%%%%%%%%%%%%%%%%

\begin{defn}\label{d:vistal}
Given a source tree $T \in \cT_n$, a maximal orthant $\Or \subseteq
\cT_n$, and a support $(\cA, \cB)$, let $\cV(T,\Or; \cA, \cB)$ be the
closure of the set of trees $X \in \Or$ for which the geodesic joining
$X$ to~$T$ has support $(\cA,\cB)$ satisfying (P2) and (P3) with
strict inequalities.  A \emph{previstal facet}\/ is any nonempty set
$\cV(T,\Or; \cA, \cB)$ of this form.
\end{defn}

The description of $\cV(T,\Or; \cA,\cB)$ becomes linear after a simple
change of variables.

\begin{defn}\label{d:squaring}
The \emph{squaring map} $\cT_n \to \cT_n$ acts on $x \in \cT_n
\subseteq \RR^E_+$ by squaring coordinates:
$$%
  (x_e \mid e \in E) \mapsto (\xi_e \mid e \in E),
  \text{ where }
  \xi_e = x_e^2.
$$
Denote by $\cT_n^2$ the image of this map, and let $\xi_e=x^2_e$
denote the coordinate indexed by $e \in E$.  The image of an orthant
in $\cT_n$ is then the equivalent orthant in $\cT_n^2$, and the image
of a previstal facet $\cV(T,\Or; \cA, \cB)$ in~$\cT^2_n$ is a
\emph{vistal facet} denoted $\cV^2(T,\Or; \cA, \cB)$.  With this
change of variables, $\norm A = \sum_{e\in A} \xi_e$ for any set of splits $A$.
\end{defn}

The squaring map induces on the variance function $S$ a corresponding
pullback function
\begin{equation}\label{e:squared_variance}
  S^2(\xi)
  =
  S(\sqrt\xi\,), \text{ where } (\sqrt\xi\,)_e = \sqrt{\xi_e}\,.
\end{equation}
Since the variance function $S(x)$ is continuous on~$\cT_n$ with a
uniquely attained minimum by Proposition~\ref{p:convex}, and
continuously differentiable on the interior of each maximal orthant by
Theorem~\ref{t:diff}, the same properties hold for $S^2$.  Thus we can
apply steepest descent methods
% in the squared coordinates as we would in the original ones.
after squaring just as we would beforehand.  This is further explored
in~Section~\ref{s:computing}.

Theorem~\ref{t:prelimThm} implies a nice description of the vistal
facets of~$\cT^2_n$.

\begin{prop}\label{p:cone}
The vistal facet $\cV^2(T,\Or; \cA, \cB)$ is a convex polyhedral cone
in $\cT^2_n$ defined by the following inequalities on $\xi \in
\RR^{E\!}$, where all norms $\norm{\cdot}$ are to be
interpreted~as~$\norm{\cdot}_{T}$.
\begin{enumerate}[\quad\rm (P1)]\setlength\itemsep{-.5ex}
\item[{\rm (O)}]\addtocounter{enumi}{1}%
$\xi \in \Or$; that is, $\xi_e \geq 0$ for all $e \in E$, and $\xi_e =
0$ for $e \notin \cE$, where $\Or = \RR^\cE_+$.

\item%(P2)
$\displaystyle\norm{B_{i+1}}^2 \sum_{e \in A_i} \xi_e \leq
\norm{B_i}^2\!\! \sum_{e \in A_{i+1}} \xi_e$ for all $i=1,\ldots,k-1$.

\item%(P3)
$\displaystyle\norm{B_i\!\minus\! J}^2\!\!\! \sum_{e \in A_i \minus I}
\xi_e \geq \norm{J}^2 \sum_{e \in I} \xi_e$ for all $i=1,\ldots,k$
and subsets $I \subseteq A_i$, $J\subseteq B_i$ such that $I \cup J$
is compatible.
\end{enumerate}
\end{prop}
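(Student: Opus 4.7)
The plan is to translate the support conditions (P2) and (P3) from Theorem~\ref{t:prelimThm} into linear inequalities in the squared coordinates $\xi$, then read off polyhedrality directly.

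First I would unpack the definitions. By Definition~\ref{d:vistal}, the previstal facet $\cV(T,\Or;\cA,\cB)$ is the closure in $\Or$ of the locus on which the geodesic from $X$ to $T$ has support $(\cA,\cB)$ satisfying (P2) and (P3) with strict inequality. The squaring map is a homeomorphism on each orthant and sends closures to closures, so $\cV^2(T,\Or;\cA,\cB)$ is the closure, inside $\cT_n^2$, of the image of that strict-inequality locus. Because the defining conditions are continuous, this closure is exactly the locus where the weak forms of (P2) and (P3), together with $\xi \in \Or$, are simultaneously satisfied.

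Second I would verify linearity. The key identity is $\norm{A}_X^2 = \sum_{e\in A}\xi_e$, which is linear in $\xi$, while $\norm{B'}_T$ for any $B' \subseteq B_i$ is a constant because the endpoint $T$ is fixed. Condition (P2) of Theorem~\ref{t:prelimThm} reads $\norm{A_i}_X/\norm{B_i}_T \le \norm{A_{i+1}}_X/\norm{B_{i+1}}_T$; cross-multiplying yields an inequality between nonnegative quantities, so squaring is monotone and produces precisely the inequality (P2) stated in the proposition. Substituting $I = I_2$ and $J = J_1$ in the hypothesis of (P3) from Theorem~\ref{t:prelimThm} and repeating the clear-and-square step yields the stated (P3). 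The orthant condition (O) is linear by definition. The locus cut out by (O), (P2), (P3) is therefore a finite intersection of closed halfspaces through the origin, hence a convex polyhedral cone.

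The main technical obstacle is the bookkeeping for support pairs containing common edges, where convention~(b) allows $\norm{A_i}_X$ to be negative and squaring is not immediately monotone. Here I would invoke Remark~\ref{r:common_edges}: the pairs at ratios $-\infty$, strictly negative, and $0$ are pinned to fixed positions in the ratio sequence, so restricting the conditions (P2) and (P3) to the positive-ratio portion of $(\cA,\cB)$ is harmless, and on that portion all norms of $A_i$ in $X$ and $B_i$ in $T$ are strictly positive. Degenerate cases with $I = \nothing$ or $J = \nothing$ in (P3) reduce to instances of the orthant constraint (O) and so are automatic. Once these conventions are in place, the argument is a purely algebraic translation.
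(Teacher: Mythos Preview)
Your proposal is correct and follows essentially the same route as the paper's own proof: translate the orthant condition and the support conditions (P2) and (P3) into linear inequalities in the squared variables~$\xi$ by cross-multiplying, squaring, and using $\norm{A}^2_X=\sum_{e\in A}\xi_e$. You are, if anything, more careful than the paper about the closure step, the substitution $I=I_2$, $J=J_1$ needed to match the two formulations of~(P3), and the common-edge bookkeeping handled via Remark~\ref{r:common_edges}.
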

\begin{proof}
For a vector $x = (x_e \mid e \in E) \in \Or$ to lie in $\cV(T,\Or;
\cA, \cB)$, the tree $X$ must be in an orthant of $\cT_n$ and satisfy
properties (P2) and~(P3).  The orthant condition immediately implies
the nonnegativity conditions in (O).  The inequalities corresponding
to (P2) are
$$%
  \frac{\norm{A_i}}{\norm{B_i}}
  \leq
  \frac{\norm{A_{i+1}}}{\norm{B_{i+1}}}.
$$
Squaring, cross-multiplying, and substituting~$\xi_e$ for $x^2_e$
yields the corresponding linear inequality in $\cV^2(T,\Or; \cA, \cB)$
in~$\cT^2_n$.  The inequalities for (P3) are obtained in the same
manner.
\end{proof}

\begin{prop}\label{p:polyhedral}
The vistal facets are of dimension $2n-1$, have pairwise disjoint
interiors, and cover $\cT^2_n$.  A point $\xi \in \cT_n^2$ lies
interior to a vistal facet $\cV^2(T,\Or; \cA, \cB)$ if and only if the
inequalities in (O), (P2), and~(P3) are strict.
\end{prop}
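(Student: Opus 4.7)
The plan is to combine the polyhedral description of Proposition~\ref{p:cone} with the uniqueness of the minimal support from Lemma~\ref{uniquecover}, together with a finiteness/density argument for the covering claim.

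For the dimension and interior characterization, by Definition~\ref{d:vistal} the vistal facet $\cV^2(T,\Or;\cA,\cB)$ is the closure of the open set $U$ of trees in $\Or$ whose geodesic to $T$ has support $(\cA,\cB)$ with (P2) and (P3) strict. The reasoning in Proposition~\ref{p:cone} shows that $U$ is precisely the locus where all inequalities (O), (P2), and (P3) hold strictly, so $U$ is the (relative) topological interior in $\cT_n^2$ of the convex polyhedral cone from Proposition~\ref{p:cone}. Since $U$ is nonempty by definition of being a previstal facet, this cone is full-dimensional in $\Or$, giving dimension $2n-1$, and its topological interior in $\cT_n^2$ equals $U$. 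This simultaneously establishes the dimension claim and the ``iff'' characterization of the interior.

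For pairwise disjointness, I would suppose $\xi$ lies in the interiors of both $\cV^2(T,\Or;\cA,\cB)$ and $\cV^2(T,\Or';\cA',\cB')$. The strict form of~(O) forces the support of the coordinates of $\xi$ to coincide, so $\Or=\Or'$. Both support pairs then describe the same unique geodesic from $\sqrt\xi$ to~$T$ via Theorem~\ref{t:prelimThm}, and each satisfies (P2) with strict inequalities, so Lemma~\ref{uniquecover} forces $(\cA,\cB) = (\cA',\cB')$. For the covering claim, the finiteness of orthants in $\cT_n$ together with the finiteness of combinatorial types of supports yields only finitely many vistal facets. The points where some (P2) or (P3) inequality fails to be strict form a finite union of lower-dimensional polyhedral pieces, so the union of vistal facet interiors is open and dense in $\cT_n^2$. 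Any $\xi \in \cT_n^2$ is then the limit of a sequence $\xi_n$ lying in interiors, and after passing to a subsequence on which the combinatorial data $(\Or_n, \cA_n, \cB_n)$ is constant, $\xi$ lies in the corresponding closed vistal facet.

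The main obstacle is the density step in the covering argument: when $\xi$ has minimal support $(\cA,\cB)$ with one or more (P3) inequalities at equality, one needs a nearby $\xi'$ whose own minimal support places it in the interior of a genuine vistal facet. The cleanest route is to observe that, after the squaring map, the (P2) and (P3) boundary loci are finitely many linear hyperplanes in $\Or$ and therefore carve $\Or$ into finitely many relatively open cells whose full-dimensional members are exactly the vistal facet interiors lying in $\Or$. Every point of $\Or$ therefore lies in the closure of some full-dimensional cell, which combined with the uniqueness step above identifies that cell with a unique vistal facet. Making this combinatorial accounting rigorous \textemdash{} in particular, ruling out the possibility that a putative full-dimensional vistal facet collapses to lower dimension because some (P3) is forced to equality by the rest of the system \textemdash{} is the delicate point, but polyhedrality from Proposition~\ref{p:cone} makes it a finite check.
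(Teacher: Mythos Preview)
Your proof is correct. The paper's own argument is much terser—two sentences—and handles the covering claim quite differently. The paper simply asserts that the vistal facets cover $\cT_n^2$ ``by definition'': every point $X$ has a geodesic to~$T$, that geodesic has a support $(\cA,\cB)$, and therefore $X$ belongs to $\cV^2(T,\Or;\cA,\cB)$. The interior characterization is then attributed to ``standard properties of convex polyhedra presented as solutions to systems of linear inequalities,'' and the dimension and disjoint-interiors claims are left entirely implicit as consequences of that.

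Your density argument for covering is more careful, and the concern you flag in your final paragraph is genuine: the paper's direct route tacitly needs the cone arising from the support at~$X$ to actually be a vistal facet in the sense of Definition~\ref{d:vistal}, i.e., to have nonempty strict-inequality locus. Nothing prior to Proposition~\ref{p:polyhedral} guarantees this for an arbitrary support of the geodesic at~$X$; the machinery that does (valid support sequences, Lemma~\ref{l:valid_support_vector}, Theorem~\ref{t:vistal_face_combinatorics}) appears only afterward. Your finitely-many-hyperplanes argument sidesteps this dependency and makes the proposition self-contained at this point in the paper. The disjoint-interiors step via Lemma~\ref{uniquecover} is exactly what is needed and is simply omitted in the paper's write-up.

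One small imprecision: the full-dimensional chambers of the hyperplane arrangement you describe need not coincide with vistal facet interiors—a given facet interior is typically a union of several such chambers, since you are cutting by the (P2)/(P3) hyperplanes coming from \emph{all} supports, not just the one relevant to a given facet. But each chamber lies in some vistal facet interior (take the minimal support of any point in it), which is all your closure argument requires.
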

\begin{proof}
The vistal facets cover $\cT^2_n$ by definition: $T \in \cV^2(T,\Or;
\cA, \cB)$ if the geodesic from $X \in \Or$ to~$T$ has support $(\cA,
\cB)$.  The second statement follows by definition~and by standard
properties of convex polyhedra presented as solutions to systems of
linear inequalities.
\end{proof}

%%%%%%%%%%%%%%%%%%%%%%%%%%%%%%%%%%%%%%%%%%%%%%%%%%%%%%%%%%%%%%%%%%%%%%
\subsection{Vistal cells}\label{sub:lowerdim}%%%%%%%%%%%%%%%%%%%%%%%%%

Henceforth in this section we focus our attention on the squared tree
space $\cT^2_n$ and its expression as a union of polyhedral vistal
facets as given by Proposition~\ref{p:cone}.
% \begin{defn}\label{d:vistalCell}
% A \emph{vistal cell}\/ is any face of a vistal facet; see
% Definition~\ref{d:signature} for more detail.
% \end{defn}
% \noindent
This subsection concerns faces of vistal facets, including compact
characterizations thereof.
% (Definition~\ref{d:signature});
% and their arrangements in the vistal subdivision.
% (It turns out that every face of a vistal facets is a vistal face;
% see Theorem~\ref{t:cell_complex}, particularly the verification of
% condition~(C1) in its proof).

\subsubsection{Signatures and vistal cells}\label{ss:ratio}%%%%%%%%

\begin{defn}\label{d:signature}
Fix a source tree $T \in \cT_n$, a (not necessarily maximal) orthant
$\Or \subseteq \cT_n$, and a support $(\cA, \cB)$.  A
\emph{signature}\/ associated with the support $(\cA, \cB)$ is a
length $k-1$ sequence $\cS = (\sigma_1,\ldots, \sigma_{k-1})$ of
symbols $\sigma_i \in \{=\:,\,\leq\}$.  The
% \emph{(closed) previstal cell}\/
\emph{previstal cell}\/ defined by $\Or$, $\cA$, $\cB$, and $\cS$ is
the set $\cV(T, \Or; \cA, \cB; \cS)$ of points $X$ in $\Or$ for
which the ratio sequence for $(\cA,\cB)$ at the point~$X$ has the
following specific form:
\begin{equation}\label{eq:ratio_sequence}
\displaystyle
  \frac{\norm{A_1}}{\norm{B_1}}
  \sigma_1
  \frac{\norm{A_2}}{\norm{B_2}}
  \sigma_2
  \cdots
  \sigma_{k-2}
  \frac{\norm{A_{k-1}}}{\norm{B_{k-1}}}
  \sigma_{k-1}
  \frac{\norm{A_k}}{\norm{B_k}}.
\end{equation}
The
% \emph{(closed) vistal cell}
\emph{vistal cell} $\cV^2(T, \Or; \cA, \cB; \cS) \subseteq \cT_n^2$ is
the image of $\cV(T, \Or; \cA, \cB; \cS)$ under~squaring.
\end{defn}

\begin{rem}\label{r:cell}
Vistal cells are convex polyhedra that need not be bounded,
% see Theorem~\ref{t:vistal_face_combinatorics}
and as such they might not be topological cells.  However, the
interior of a convex polyhedron is a topological cell, so every vistal
cell is the closure of a topological cell.
\end{rem}

\begin{lem}\label{l:full_dimensional}
The dimension of the vistal cell $\cV^2(T, \Or; \cA, \cB; \cS)$ is at
most $\dim(\Or) - m(\cS)$, where $m(\cS)$ is the number of~``=''
components in~$\cS$.  The vistal cell $\cV^2(T, \Or; \cA, \cB; \cS)$
is full-dimensional if and only if there exists a point $X\in \cV(T,
\Or; \cA, \cB; \cS)$ satisfying the following two properties.
\begin{enumerate}[\quad\rm(V1)]
\item%(V1)
For each $i=1,\ldots,k-1$, $\frac{\norm{A_i}}{\norm{B_i}}=
\frac{\norm{A_{i+1}}}{\norm{B_{i+1}}}$ if $\sigma_i$ is~``='' and
$\frac{\norm{A_i}}{\norm{B_i}}< \frac{\norm{A_{i+1}}}{\norm{B_{i+1}}}$
if $\sigma_i$ is~``$\leq$''.
\item%(V2)
The inequalities in (P3) are satisfied strictly.
\end{enumerate}
\end{lem}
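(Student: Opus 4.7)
The plan is to work in the squared coordinates of $\cT_n^2$, where by Proposition~\ref{p:cone} the conditions defining $\cV^2(T, \Or; \cA, \cB; \cS)$ become a system of linear constraints on $\xi \in \Or$: namely, $m(\cS)$ linear equalities (obtained by replacing ``$\leq$'' with ``$=$'' in (P2) at each position where $\sigma_i$ is ``$=$''), $(k-1)-m(\cS)$ linear inequalities (from the remaining (P2) positions), together with the linear (P3) inequalities. The cell is then a convex polyhedron inside~$\Or$, and its dimension is controlled by how much these equalities cut down the ambient orthant.

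For the upper bound, I would first show that the $m(\cS)$ equalities are linearly independent. Each such equality reads $\norm{B_{i+1}}_T^2 \sum_{e \in A_i} \xi_e = \norm{B_i}_T^2 \sum_{e \in A_{i+1}} \xi_e$ and involves variables only from the disjoint blocks $A_i$ and $A_{i+1}$ of the partition $\cA$. For any hypothetical linear combination of these equalities that vanishes, picking the smallest index $i$ with nonzero coefficient shows that the variables in $A_i$ appear in no other equation of the combination, forcing that coefficient to vanish. The codimension inside~$\Or$ of the subspace they cut out is therefore exactly $m(\cS)$, giving the bound $\dim \cV^2(T, \Or; \cA, \cB; \cS) \leq \dim(\Or) - m(\cS)$.

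For the ``if'' direction of the characterization, assume such an $X$ exists. Its squared coordinates~$\xi$ lie in $\cV^2$, satisfy the $m(\cS)$ equalities exactly by (V1), and satisfy each of the non-equality (P2) inequalities as well as all (P3) inequalities strictly by (V1) and~(V2). By continuity of the defining linear forms, a small relative neighborhood of $\xi$ inside the affine subspace cut out by those equalities (intersected with the relative interior of~$\Or$) continues to satisfy every strict inequality and hence lies in~$\cV^2$. Linear independence of the equalities makes this relative neighborhood have dimension $\dim(\Or) - m(\cS)$, so the cell attains the maximum dimension.

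For the ``only if'' direction, I would invoke standard facts about convex polyhedra: a nonempty convex polyhedron has a nonempty relative interior, and every point there satisfies all defining constraints strictly except for the implicit equalities. If $\dim \cV^2 = \dim(\Or) - m(\cS)$, then the affine hull of~$\cV^2$ has the same dimension as the common zero set of the $m(\cS)$ given linearly independent equalities inside~$\Or$, so no further defining inequality can be an implicit equality on~$\cV^2$. A relative interior point thus produces a tree $X$ satisfying (V1) and~(V2). The main subtlety is the linear independence claim above, together with handling the degenerate endpoint pairs $(A_0, B_0)$ and $(A_k, B_k)$ of Remark~\ref{r:common_edges}; since Definition~\ref{d:signature} runs the ratio sequence only over the strictly positive ratios, those endpoint positions do not contribute to~$m(\cS)$ and can be set aside.
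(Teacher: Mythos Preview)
Your proof is correct and is exactly the approach the paper takes: the paper's entire argument is the single sentence ``This follows from standard polyhedral theory, as treated in \cite{ziegler}, for instance,'' and you have simply spelled that theory out, including the key linear-independence check on the $m(\cS)$ (P2)-equalities via the block structure of the~$A_i$. Your write-up is in fact considerably more detailed than what the paper supplies.
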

\begin{proof}
This follows from standard polyhedral theory, as treated in
\cite{ziegler}, for instance.
\end{proof}

Proposition~\ref{p:cone} implies that (i)~vistal cells are faces of
vistal facets, and that (ii)~vistal facets are vistal cells for which
$\Or$ is maximal and the signature contains only ``$\leq$'' symbols.
What we prove here is that \emph{all}\/ faces of vistal facets can be
represented as vistal cells, and that under some simple conditions on
$(\cA, \cB)$, Definition~\ref{d:signature} provides a canonical
description of each vistal cell.  We start by determining all supports
and signatures associated with the geodesic $\gamma$ from $T$ to a
particular point $X$.  By Lemma~\ref{uniquecover}, the
geodesic~$\gamma$ can be represented by a unique minimal support
$(\cA, \cB)$ satisfying (\ref{eq:strict_inequalities}):
$$%
\displaystyle
\frac{\norm{A_1}}{\norm{B_1}}
  <
  \frac{\norm{A_2}}{\norm{B_2}}
  <
  \cdots
  <
  \frac{\norm{A_k}}{\norm{B_k}}.
$$
Any other support $(\cA', \cB')$ of $\gamma$ corresponds to a ratio
sequence in which at least one ratio $\norm{A_i}/\norm{B_i}$ is
replaced by a \emph{ratio subsequence} formed from a partition of
$A_i$ and $B_i$, with equalities between all terms.  Any ratio
subsequence for which $X$ continues to satisfy (P3) together with
equalities between terms of the ratio subsequences constitutes a valid
support for $\gamma$.  We next give a specific method for determining
all such support sequences.

\subsubsection{Incompatibility graphs and equality subsequences}%%%%%%
\label{ss:incompatibility}%%%%%%%%%%%%%%%%%%%%%%%%%%%%%%%%%%%%%%%%%%%%

In \cite{OwenProvan10} it was shown how condition~(P3) for
support pair $(A_i, B_i)$ can be rephrased in terms of conditions on a
special node-weighted graph derived from the compatibility relations
between $X$ and $T$ and their coordinate values.  We summarize the
technique here.  Denote the coordinates of $X$ and $T$ by $X = (x_e
\mid e \in \cE_X)$ and $T = (t_e \mid e \in \cE_T)$, and let $\xi_e =
x_e^2$ and $\tau_e = t_e^2$ be their squared coordinates.

\begin{defn}\label{d:incompatibilityGraph}
The \emph{incompatibility graph}\/ $G(A_i,B_i)$ between $A_i$ and
$B_i$ is the weighted bipartite graph with vertex set $A_i\cup B_i$
and an edge from $a\in A_i$ to $b\in B_i$ whenever $a$ and~$b$ are
incompatible.  The weight of each vertex $a\in X$ is $\tilde{\xi}_a =
\xi_a/\sum_{e\in A_i}\xi_e$, and the weight of each vertex $b\in T$ is
$\tilde{\tau}_b = \tau_b/\sum_{e\in B_i}\tau_e$.  A \emph{(vertex)
cover} for $G(A_i,B_i)$ is a set $C\subset A_i\cup B_i$ having the
property that every edge of $G(A_i,B_i)$ has at least one endpoint in
$C$.  The weight of $C$ is the sum of the weights of its vertices.
\end{defn}

\begin{lem}[{\cite[Section~3]{OwenProvan10}}]\label{l:P3_for_incomp_graph}
Property (P3) holds for support pair $(A_i, B_i)$ if and only if every
cover of $G(A_i, B_i)$ has weight~$\geq 1$.\qed
\end{lem}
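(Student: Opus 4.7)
The plan is to set up a bijection between vertex covers of $G(A_i,B_i)$ and the quadruples $(I_1,I_2,J_1,J_2)$ appearing in~(P3), then verify by a short algebraic manipulation that under this bijection the weight-$\geq 1$ condition coincides exactly with the (P3) inequality.

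First I would exploit that $G(A_i,B_i)$ is bipartite: every subset $C\subseteq A_i\cup B_i$ splits uniquely as $C = I_1\cup J_2$ with $I_1 = C\cap A_i$ and $J_2 = C\cap B_i$. Setting $I_2 = A_i\minus I_1$ and $J_1 = B_i\minus J_2$ gives partitions of $A_i$ and $B_i$. Since the edges of $G(A_i,B_i)$ are precisely the incompatible pairs across the bipartition, $C$ is a vertex cover exactly when no edge runs from $I_2$ to $J_1$, which is to say that every $(a,b)\in I_2\times J_1$ is compatible, i.e.\ $I_2\cup J_1$ is compatible. This is the compatibility condition imposed in~(P3), yielding a bijection between covers of $G(A_i,B_i)$ and the partition data admitted by~(P3).

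Next I would translate the weight of such a cover using Definition~\ref{d:incompatibilityGraph}:
\[
  w(C)
    = \sum_{a\in I_1}\tilde{\xi}_a + \sum_{b\in J_2}\tilde{\tau}_b
    = \frac{\norm{I_1}^2}{\norm{A_i}^2} + \frac{\norm{J_2}^2}{\norm{B_i}^2}.
\]
Using $\norm{A_i}^2 = \norm{I_1}^2 + \norm{I_2}^2$ and $\norm{B_i}^2 = \norm{J_1}^2 + \norm{J_2}^2$, clearing denominators and cancelling the shared $\norm{I_1}^2\norm{J_1}^2$ and $\norm{I_1}^2\norm{J_2}^2$ terms gives
\[
  w(C)\geq 1
  \iff
  \norm{I_1}^2\norm{J_2}^2 \geq \norm{I_2}^2\norm{J_1}^2
  \iff
  \frac{\norm{I_1}}{\norm{J_1}} \geq \frac{\norm{I_2}}{\norm{J_2}},
\]
which is exactly the (P3) inequality for that partition. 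Quantifying over both sides of the bijection then closes the equivalence stated in the lemma.

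The only bookkeeping hazard---not really an obstacle---is the handling of degenerate partitions in which one of $I_1,I_2,J_1,J_2$ is empty; these correspond to the trivial covers $C = A_i$, $C = B_i$, or $C = A_i\cup B_i$, where both the weight condition and the limiting form of the (P3) inequality are directly verifiable. The substantive content of the lemma is thus concentrated entirely in recognizing the bijection between vertex covers of $G(A_i,B_i)$ and the partition data of~(P3); the rest is a one-line algebraic identity.
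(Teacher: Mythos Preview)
Your argument is correct. The paper itself does not supply a proof of this lemma---it is stated with a \verb|\qed| and attributed to \cite[Section~3]{OwenProvan10}---so there is no in-paper proof to compare against; your bijection between vertex covers $C=I_1\cup J_2$ and the (P3) partition data, together with the algebraic identity reducing $w(C)\geq 1$ to $\norm{I_1}^2\norm{J_2}^2\geq\norm{I_2}^2\norm{J_1}^2$, is exactly the intended content and matches the argument in the cited reference.
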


By Lemma~\ref{l:P3_for_incomp_graph}, testing a support pair $(A_i,
B_i)$ for property (P3) is equivalent to showing that the \emph{min
weight cover}\/ for $G(A_i, B_i)$ has weight 1. The problem of finding
the minimum cover in $G(A_i, B_i)$ in turn can be reduced to solving a
max flow problem (see \cite{AhujaEtAl93}, Section 12.3) on a specially
defined flow network $F(A_i,B_i)$.  To construct $F(A_i,B_i)$, start
with $G(A_i,B_i)$, attach a source~$\bar s$ to the $A_i$-vertices of
$G(A_i,B_i)$ and a sink~$\bar t$ to the $B_i$-vertices
of~$G(A_i,B_i)$, and direct all edges from $\bar s$ toward $\bar t$.
Set the capacity of each edge $(\bar s,a)$ to~$\tilde{\xi}_a$, set the
capacity of each edge $(b,\bar t)$ to~$\tilde{\tau}_b$, and set the
capacities of edges in $G(A_i,B_i)$ to~$\infty$.  The Max-Flow-Min-Cut
Theorem implies that the value of the maximum $(\bar s,\bar t)$-flow
$f$ for $F(A_i,B_i)$ is equal to the capacity of a minimum capacity of
an $(\bar s,\bar t)$-cut $K$ in $F(A_i,B_i)$, which in turn
corresponds to a minimum weight cover $C$ for $G(A_i,B_i)$.  Thus the
condition in Lemma~\ref{l:P3_for_incomp_graph} for $G(A_i,B_i)$ is
equivalent to the property that the max flow in $F(A_i,B_i)$ is $\geq
1$.  The precise relationship between max flows in $F(A_i,B_i)$ and
min covers in $G(A_i,B_i)$ is crucial to determining the possible
ratio subsequences that can replace a term $\norm{A_i}/\norm{B_i}$ in
(\ref{eq:strict_inequalities}), and we clarify this relationship
below.

\begin{ex}\label{e:picard_queyranne}
Figure~\ref{f:picard_queyranne}
\begin{figure}
%\centering
\scalebox{0.8}{ \input{picard_queyranne.pstex_t} }
%\vspace{3.5in}\comment{[...page-long figure goes here...]}\vspace{3.5in}
\caption{Characterizing ratio subsequences}
\label{f:picard_queyranne}
%\centering
\end{figure}
demonstrates this for a hypothetical support pair $(A_i,B_i)$ with
$A_i = \{x_1,x_2,x_3,x_4,x_5,x_6,x_7,x_8\}$ and $B_i =
\{t_1,t_2,t_3,t_4,t_5,t_6,t_7\}$, compatibility graph $G(A_i,B_i)$,
and values $\xi_a$, and $\tau_b$ as given in
Figure~\ref{f:picard_queyranne}(a).
Figure~\ref{f:picard_queyranne}(b) depicts the associated flow graph
$F(A_i,B_i)$ and max flow.  For simplicity, the weights are not
normalized, so that all numbers are scaled by $23$, the sum of the
weights.  This flow has value 23, which means that the pair
$(A_i,B_i)$ satisfies (P3).
\end{ex}

\subsubsection{Residual graphs and ratio subsequences}\label{ss:residual}

Now consider the problem of determining the possible ratio
subsequences replacing a term $\norm{A_i}/\norm{B_i}$ in the ratio
sequence of a minimal support for $X$ and $T$.  We use the optimal
flow conditions on $F(A_i,B_i)$ to do this.  Recall that here $(A_i,
B_i)$ also satisfies (P3), so that the max flow $f$ on $F(A_i,B_i)$
has value~$1$.  The associated minimum weight cover for $G(A_i,B_i)$
can then be obtained from this flow.  To do this, we define another
auxiliary graph.

\begin{defn}\label{d:residual}
The \emph{residual graph} $G_i^r$ with respect to $f$ has
\begin{enumerate}[\quad(a)]\setlength\itemsep{-.5ex}
\item%
all edges of $G(A_i,B_i)$, directed as in $F(A_i,B_i)$, and
%all edges of $G(A_i,B_i)$;
\item%
all edges~$e$ of $F(A_i,B_i)$ \textemdash{} but in the reverse direction \textemdash{} where
$f_e > 0$.
%an edge~$e$ of $G(A_i,B_i)$---but in the reverse direction---if $f_e >
%0$; and
%\item%
%any edge adjacent to the source~$\bar s$ or sink~$\bar t$---but in the
%reverse direction.
\end{enumerate}
An \emph{$(\bar s,\bar t)$-cut}\/ in $G_i^r$ is any partition
$(H,\oH)$ of the nodes of $G_i^r$ having the property that no edge
of~$G_i^r$ goes from~$H$ to~$\oH$.
\end{defn}

It is easy to see that by this definition, $H$ contains $\bar s$ and
$\oH$ contains $\bar t$.
The definition of residual graph is based on the structure of
$F(A_i,B_i)$ and the fact that the flow $f$ saturates (is at capacity
on) all edges adjacent to either $\bar s$ or $\bar t$.  The
Max-Flow-Min-Cut Theorem states that every $(\bar s,\bar t)$-cut in
$G_i^r$ corresponds to a cut of capacity 1 in $F(A_i,B_i)$, which in
turn corresponds to a cover of weight 1 in $G(A_i,B_i)$.  This leads
to the following result.

\begin{lem}\label{l:residual}
Let $(H,\oH)$ be a $(\bar s,\bar t)$-cut in the residual graph $G_i^r$.  Then the sets
$I_1= \oH\cap A_i$, $J_1=\oH\cap B_i$, $I_2 = H\cap A_i$, and $J_2=
H\cap B_i$ have the property that $\frac{\norm{I_1}}{\norm{J_1}} =
\frac{\norm{I_2}}{\norm{J_2}}$ can replace
$\frac{\norm{A_i}}{\norm{B_i}}$ in (\ref{eq:strict_inequalities}) and
the resulting sequence still satisfies (P2) and (P3).
\end{lem}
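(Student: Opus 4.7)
The plan is to translate the cut condition on $G_i^r$ into structural statements about which arcs can carry flow between the four pieces $I_1, J_1, I_2, J_2$, and then to exploit flow conservation together with the fact that every arc incident to $\bar s$ or $\bar t$ is saturated (since the max flow value equals~$1$). First I would unpack the condition ``no arc of $G_i^r$ goes from $H$ to $\ol H$'' into two facts: (a) no forward arc $(a,b)$ of $G(A_i, B_i)$ has $a\in I_2$ and $b\in J_1$, so $I_2\cup J_1$ is compatible; and (b) no reverse arc $(b,a)$ arising from a positive-flow forward arc has $b\in J_2$ and $a\in I_1$, so $f_{(a,b)}=0$ whenever $a\in I_1$ and $b\in J_2$. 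Together these say that all flow leaving $I_1$ stays in $J_1$ and, symmetrically, every unit of flow entering $J_2$ originates in $I_2$.

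Second, since the max flow equals $1$ and the capacities $\tilde\xi_a$ leaving $\bar s$ sum to $1$, every arc $(\bar s,a)$ is saturated, as is every arc $(b,\bar t)$. Flow conservation across $I_1$ then forces
$$
  \frac{\norm{I_1}}{\norm{A_i}} = \sum_{a\in I_1}\tilde\xi_a = \text{(flow from $I_1$ to $J_1$)} = \sum_{b\in J_1}\tilde\tau_b = \frac{\norm{J_1}}{\norm{B_i}},
$$
which rearranges to $\norm{I_1}/\norm{J_1} = \norm{A_i}/\norm{B_i}$, and the symmetric calculation yields $\norm{I_2}/\norm{J_2} = \norm{A_i}/\norm{B_i}$. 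These are the equalities claimed by the lemma, and together they make (P2) immediate for the refined sequence: the two new equal ratios coincide with the one they replace and so are already sandwiched correctly between $\norm{A_{i-1}}/\norm{B_{i-1}}$ and $\norm{A_{i+1}}/\norm{B_{i+1}}$. Support property (P1) for the refined partition is also automatic, since $I_2\cup J_1$ is compatible by (a) while every other new incidence is inherited from the original support.

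The main obstacle is verifying that (P3) survives for each refined pair. By Lemma~\ref{l:P3_for_incomp_graph}, this amounts to exhibiting a flow of value $1$ in the auxiliary network $F(I_1,J_1)$, whose vertex weights are renormalized by $\norm{I_1}$ and $\norm{J_1}$ rather than by $\norm{A_i}$ and $\norm{B_i}$. My plan is to restrict $f$ to the arcs with both endpoints in $\{\bar s\}\cup I_1\cup J_1\cup\{\bar t\}$ (only $I_1 \leftrightarrow J_1$ interior arcs survive by (a) and (b)) and then multiply every arc value by the factor $\norm{A_i}/\norm{I_1}$. Saturation of each $(\bar s,a)$-arc becomes $\xi_a/\norm{A_i}\cdot\norm{A_i}/\norm{I_1}=\xi_a/\norm{I_1}$, which matches the new capacity; saturation of each $(b,\bar t)$-arc becomes $\tau_b/\norm{B_i}\cdot\norm{A_i}/\norm{I_1}=\tau_b/\norm{J_1}$, which matches its new capacity precisely because of the ratio identity $\norm{I_1}/\norm{J_1}=\norm{A_i}/\norm{B_i}$ established above. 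Hence the rescaled flow is feasible with value $1$, so (P3) holds for $(I_1,J_1)$, and the symmetric argument handles $(I_2,J_2)$, completing the proof.
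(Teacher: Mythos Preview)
Your argument is correct and follows the same line as the paper's proof: forward arcs of $G(A_i,B_i)$ in $G_i^r$ give compatibility of $I_2\cup J_1$, absence of reverse arcs across the cut forces all flow from $I_1$ to land in $J_1$, and saturation of the source/sink arcs plus flow conservation yields the equal ratios. Your treatment of~(P3) is in fact more explicit than the paper's, which simply observes that the restricted flow ``continues to saturate the edges adjacent to $\bar s$ and~$\bar t$''; you spell out the rescaling that makes this a value-$1$ flow in the renormalized network $F(I_1,J_1)$. One small notational point: since $\tilde\xi_a = \xi_a/\sum_{e\in A_i}\xi_e$ and $\sum_{e\in A_i}\xi_e = \norm{A_i}^2$, the displayed identity and the rescaling factor should carry squares (e.g.\ $\norm{I_1}^2/\norm{A_i}^2$ and $\norm{A_i}^2/\norm{I_1}^2$), but this does not affect the conclusion.
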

\begin{proof}
By Definition \ref{d:residual}(a) all edges of $G(A_i,B_i)$ are in
$G^r_i$, so in particular there can be no edge from any element in
$I_2$ to any element in $J_1$.  Thus $I_2\cup J_1$ is compatible.
Further, by Definition \ref{d:residual}(b) there are no edges of
$G^r_i$ from $H$ to $\oH$, so the flow is conserved in~$H$, and hence
in~$\oH$.  This implies $\norm{I_1} = \norm{J_1}$ and $\norm{I_2} =
\norm{J_2}$, and thus the ratios are equal.  Finally, since the flow
$f$ restricted to each of the subgraphs $F(I_1,J_1)$ and $F(I_2,J_2)$
continues to saturate the edges adjacent to $\bar s$ and~$\bar t$,
property (P3) continues to be satisfied on the replacement support
pairs $(I_1,J_1)$ and~$(I_2,J_2)$.
\end{proof}

\begin{ex}[continuation of Example~\ref{e:picard_queyranne}]
One min cut with respect to the flow in
Figure~\ref{f:picard_queyranne}(b) has $\oH = \{x_1,x_2,x_3,t_1,\bar
t\}$ and~$H$ its complement; this corresponds to the pairs $I_1 =
\{x_1,x_2,x_3\}$, $J_1 = \{t_1\}$, $I_2 = \{x_4,x_5,x_6,x_7,x_8\}$,
and $J_2 = \{t_2,t_3,t_4,t_5,t_6,t_7\}$, with squared ratios
$\frac{9}{9} = \frac{14}{14}$.
\end{ex}

Iteratively applying Lemma~\ref{l:residual} to the resulting graphs
$G(I_1,J_1)$ and $G(I_2,J_2)$ can produce various replacement
subsequences for $(A_i,B_i)$, depending upon the choice of min cuts
and the number of times the lemma is applied.  Picard and Queyranne
\cite{picard_queyranne} give a method to find all cuts for this flow
problem, thereby allowing us to characterize all ratio subsequences
associated with $(A_i,B_i)$.

\begin{defn}
Write $G_i^*(X)$ for the result of modifying the residual graph
$G_i^r$ by contracting all edges contained in directed cycles.
\end{defn}

The directed graph $G_i^*(X)$ is acyclic, is independent of the actual
(max) flow~$f$, and has nodes corresponding to a partition of the
nodes of $A_i\cup B_i\cup\{\bar s,\bar t\}$.  Furthermore, the nodes
in any partition obtained by iteratively applying
Lemma~\ref{l:residual} must consist of unions of the sets
corresponding to the nodes of $G_i^*$.

\begin{defn}
An \emph{upper ideal}\/ for~$G_i^*(X)$ is any set~$I$ of nodes
of~$G_i^*(X)$ such that $v \in I$ whenever $u \in I$ and $(u,v)$ is an
edge of~$G_i^*(X)$.
\end{defn}

A partition $(H,\oH)$ is therefore a cut if and only if $H$ is an
upper ideal.  Let ${\mathcal I}_i$ denote the set of upper ideals
of~$G_i^*(X)$, excluding the trivial ideal~$\{\bar s\}$.  The next
corollary follows from this discussion.

\begin{cor}\label{c:path_ordering}
The maximum size of any ratio subsequence that can replace $(A_i,
B_i)$ in~(\ref{eq:strict_inequalities}) is equal to the number of
vertices in $G_i^*(X)\setminus\{\bar s,\bar t\}$.  Moreover, the ratio
subsequences
$$%
\displaystyle
\frac{\norm{A_{i,1}'}}{\norm{B_{i,1}'}}
  = \frac{\norm{A_{i,2}'}}{\norm{B_{i,2}'}}
  = \cdots
  = \frac{\norm{A_{i,\ell}'}}{\norm{B_{i,\ell}'}}
$$
are in bijection with nested sequences of sets in ${\mathcal I}_i$.\qed
\end{cor}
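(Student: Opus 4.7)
The plan is to identify each valid ratio subsequence replacing $(A_i,B_i)$ in~(\ref{eq:strict_inequalities}) with a nested chain of upper ideals of $G_i^*(X)$, and then bound the maximum length by standard poset combinatorics.

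First I would unpack the iterative application of Lemma~\ref{l:residual} described in the preceding discussion. A single cut $(H,\oH)$ of the current residual (sub)graph refines the working pair $(A'',B'')$ into two subpairs with common ratio $\norm{A_i}/\norm{B_i}$. Because no directed cycle in $G_i^r$ can straddle an $(\bar s,\bar t)$-cut, such a cut descends unambiguously to a cut in $G_i^*(X)$, which by the paragraph immediately before the corollary is the data of an upper ideal $H$ of $G_i^*$ with $\bar s\in H$ and $\bar t\notin H$. Composing the successive cuts, in whatever order they are performed, yields a well-defined nested chain
\[
  \{\bar s\} = H_0 \subsetneq H_1 \subsetneq \cdots \subsetneq H_\ell = V
\]
of upper ideals of $G_i^*$, where $V$ is the full vertex set, with the $\ell$ resulting blocks given by $P_j := H_j\setminus H_{j-1}$ and $(A'_{i,j},B'_{i,j}) = (P_j\cap A_i,\,P_j\cap B_i)$. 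Conversely, any such chain realizes a ratio subsequence of length $\ell$: the max flow $f$ restricts to a max flow on each subproblem, so Lemma~\ref{l:residual} applies at each step $H_{j-1}\subsetneq H_j$, preserving compatibility of $I_2\cup J_1$ and preserving all ratios. This gives the claimed bijection between ratio subsequences of length $\ell$ and nested sequences $H_1\subsetneq\cdots\subsetneq H_\ell = V$ in $\mathcal{I}_i$ (each $H_j$ properly contains the excluded trivial ideal $\{\bar s\}$).

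For the maximum, I would argue as follows. Every cycle in $G_i^r$ alternates between vertices of $A_i$ and vertices of $B_i$, since edges go either $a\to b$ (original incompatibility) or $b\to a$ (reversed flow), and no cycle can pass through $\bar s$ or $\bar t$ because those have, respectively, no outgoing and no incoming edges in $G_i^r$. Consequently every non-$\{\bar s,\bar t\}$ vertex of $G_i^*$ is an SCC containing both some $a\in A_i$ and some $b\in B_i$, so it can appear alone as a valid singleton block. In contrast, the vertex $\{\bar t\}$ contains no element of $A_i\cup B_i$, so the block containing $\bar t$ must absorb at least one additional vertex, costing one chain step. A chain of upper ideals from $\{\bar s\}$ to $V$ can be extended one vertex at a time along any linear extension of the DAG $G_i^*$, yielding at most $|G_i^*|-1$ increments; one is lost to bundling $\bar t$, so the maximum ratio-subsequence length is $|G_i^*|-2 = |G_i^*\setminus\{\bar s,\bar t\}|$. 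The main obstacle I anticipate is tightening the bijection: one must verify that the subflow of $f$ restricted to each subgraph remains a max flow (so that Lemma~\ref{l:residual} is repeatedly applicable), and that the order in which successive cuts are executed does not affect the ultimate partition, so that ratio subsequences correspond cleanly to unordered nested chains of upper ideals.
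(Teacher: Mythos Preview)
Your proposal is correct and follows the same approach the paper intends: the paper offers no explicit proof, stating only that the corollary ``follows from this discussion'' (the Picard--Queyranne characterisation of min cuts as upper ideals of $G_i^*$), so you are simply supplying the omitted details. Your identification of ratio subsequences with chains of upper ideals via iterated application of Lemma~\ref{l:residual}, your verification that each non-$\{\bar s,\bar t\}$ SCC contains both an $A_i$-vertex and a $B_i$-vertex (because every $a$ carries positive through-flow to some $b$, forcing a $2$-cycle in $G_i^r$), and your count giving maximum length $|G_i^*|-2$ are all sound and match the paper's reasoning and its subsequent Corollary~\ref{c:topological_orderings}.
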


This simplifies further.  A \emph{topological ordering}\/ of
$G_i^*(X)$ is any numbering of the vertices so that for every edge
$(u,v)$ of~$G_i^*(X)$, vertex~$v$ is numbered lower than~$u$.  Every
acyclic graph has at least one topological ordering.

\begin{cor}\label{c:topological_orderings}
The maximum-cardinality ratio subsequences of
Corollary~\ref{c:path_ordering} are in bijection with the topological
orderings of $G_i^*(X)$.  In fact, any ratio subsequence for a
particular pair $(A_i,B_i)$ corresponds to a partition of the vertices
of $G_i^*(X)$ according to one of these topological orderings.
\end{cor}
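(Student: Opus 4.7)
The plan is to reduce to Corollary~\ref{c:path_ordering} and then invoke the classical order-theoretic bijection between saturated chains of upper ideals in a finite acyclic digraph and topological orderings of that digraph. By Corollary~\ref{c:path_ordering}, a maximum-cardinality ratio subsequence replacing $(A_i,B_i)$ corresponds to a maximal nested chain $\{\bar s\} \subsetneq I_1 \subsetneq I_2 \subsetneq \cdots \subsetneq I_N$ in $\mathcal{I}_i$, where $N$ is the number of vertices of $G_i^*(X)\setminus\{\bar s\}$, so each containment must add exactly one vertex.

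First I would show that such saturated chains are in bijection with topological orderings of $G_i^*(X)$. If $I_{j+1}\setminus I_j = \{v_{j+1}\}$, then the upper-ideal condition demands that every edge $(v_{j+1},w)$ of $G_i^*(X)$ has $w \in I_{j+1}$; since $G_i^*(X)$ is acyclic and $v_{j+1} \notin I_j$, we must have $w \in I_j$. Thus listing $v_1,v_2,\ldots,v_N$ in order of addition produces a sequence in which every edge of $G_i^*(X)$ points from a higher-indexed vertex to a lower-indexed one, i.e., a topological ordering. Conversely, given any topological ordering $v_1,\ldots,v_N$, the prefixes $I_j = \{\bar s, v_1,\ldots,v_j\}$ form upper ideals (edges out of $v_j$ by definition go to $v_1,\ldots,v_{j-1}$, already in $I_{j-1}$), and consecutive prefixes differ by a single element; the two constructions are manifestly inverse.

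For the second statement, an arbitrary nested chain $I_1 \subsetneq I_2 \subsetneq \cdots \subsetneq I_\ell$ in~$\mathcal{I}_i$ partitions the vertices of $G_i^*(X)\setminus\{\bar s\}$ into consecutive blocks $B_j = I_j \setminus I_{j-1}$ (with $I_0 = \{\bar s\}$). Each restriction $G_i^*(X)|_{B_j}$ is acyclic, so admits a topological ordering, and concatenating these block-orderings in the order $B_1,B_2,\ldots,B_\ell$ produces a global topological ordering of $G_i^*(X)$ whose level sets refine back to the given chain: indeed, any edge $(u,w)$ with $u \in B_j$ forces $w \in I_j$ by the upper-ideal property, and hence $w$ lies in $B_{j'}$ for some $j' \le j$, so the concatenated listing remains topological. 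Therefore every ratio subsequence of $(A_i,B_i)$ arises as a coarsening of some topological ordering into consecutive intervals, which is exactly the assertion.

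The main obstacle is really only bookkeeping around the auxiliary vertices $\bar s$ and $\bar t$: the trivial ideal $\{\bar s\}$ is explicitly excluded from $\mathcal{I}_i$, and one must verify that $\bar t$ appears at a forced position in every topological ordering (namely, as the unique source of $G_i^*(X)$ in the reversed orientation after flow cancellation), so that the $N$ nontrivial slots match the bound on subsequence length from Corollary~\ref{c:path_ordering}. No deep additional ingredient is needed beyond the observation that, in an acyclic digraph, maximal chains in the lattice of upper ideals correspond precisely to linear extensions of the induced partial order.
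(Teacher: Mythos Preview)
Your argument is correct and follows the only natural route: the classical bijection between saturated chains of upper ideals in a finite DAG and its linear extensions, applied to $G_i^*(X)$ via Corollary~\ref{c:path_ordering}. The paper itself gives no proof for this corollary; it is stated immediately after Corollary~\ref{c:path_ordering} and the definition of topological ordering, and treated as self-evident. So there is nothing to compare against---you have simply supplied the details that the authors left implicit.

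One small bookkeeping slip: you write that the saturated chain has length $N = |G_i^*(X)\setminus\{\bar s\}|$, but Corollary~\ref{c:path_ordering} gives the maximum subsequence length as the number of vertices in $G_i^*(X)\setminus\{\bar s,\bar t\}$. This is exactly the issue you flag in your final paragraph, and it resolves as you suspect: in the residual graph $G_i^r$, the node $\bar s$ has only incoming edges (from the $A_i$-side, reversed because those arcs are saturated) and $\bar t$ has only outgoing edges (to the $B_i$-side, likewise reversed), so after contraction $\bar s$ is the unique sink and $\bar t$ the unique source of $G_i^*(X)$. Hence every topological ordering places $\bar s$ first and $\bar t$ last, and the bijection with orderings of $G_i^*(X)\setminus\{\bar s,\bar t\}$ is immediate.
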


\begin{ex}[continuation of Example~\ref{e:picard_queyranne}]\label{e:cont}
Applying Corollary~\ref{c:topological_orderings} to the example in
Figure~\ref{f:picard_queyranne}, the only two acyclic orderings
of~$G^*$ are $(U,V,X,W)$ and $(U,V,W,X)$, which results in the two
maximal subsequences
$$%
(\cA,\cB)
  =
  \left\{\begin{array}{@{}l}
    \big(\{x_1,x_2,x_3\},\{t_1\}\big),
    \big(\{x_4,x_5,x_6\},\{t_2,t_3,t_4,t_5\}\big),
    \big(\{x_7\},\{t_6\}\big),
    \big(\{x_8\},\{t_7\}\big)
  \\[.5ex]
    \big(\{x_1,x_2,x_3\},\{t_1\}\big),
    \big(\{x_4,x_5,x_6\},\{t_2,t_3,t_4,t_5\}\big),
    \big(\{x_8\},\{t_7\}\big),
    \big(\{x_7\},\{t_6\}\big)
% \big((\{x_1,x_2,x_3\},\{t_1\}),
%     (\{x_4,x_5,x_6\},\{t_2,t_3,t_4,t_5\}),
%     (\{x_7\},\{t_6\}),
%     (\{x_8\},\{t_7\})\big)
% \\
% \big(
%     (\{x_1,x_2,x_3\},\{t_1\}),
%     (\{x_4,x_5,x_6\},\{t_2,t_3,t_4,t_5\}),
%     (\{x_8\},\{t_7\}),
%     (\{x_7\},\{t_6\})
% \big)
  \end{array}\right.
$$
respectively, both of which have squared ratios of $\frac 99 =
\frac{12}{12} = \frac 11 = \frac 11 = 1$.  The set of possible
replacement subsequences for $(\cA,\cB)$ corresponds to the twelve
distinct contiguous partitions that can be formed from one of the
above two sequences.
\end{ex}

\subsubsection{Valid support sequences}\label{ss:valid}%%%%%%%%%%%%%%%

We next set up the combinatorial structure to give a canonical
description of the vistal cell $\cV^2(T, \Or; \cA, \cB; \cS)$.

\begin{defn}\label{d:valid}
Let $(A_i,B_i)$ be a support pair for the minimal support $(\cA,
\cB)$.  A \emph{valid support sequence}\/ for $(A_i,B_i)$ is comprised
of a set of pairs
$(A'_{i,1},B'_{i,1}),\ldots,(A'_{i,\ell},B'_{i,\ell})$ with the
following properties.
\begin{enumerate}[\quad\rm (F1)]\setlength\itemsep{-.5ex}
\item%
The sets $A'_{i,j}$ and $B'_{i,j}$ are nonempty and partition $A_i$
and~$B_i$, respectively.
\item%
The incompatibility graph $G(A'_{i,j},B'_{i,j})$ is connected for each
$j=1,\ldots,\ell$.
\item%
Contracting the sets $A'_{i,j}\cup B'_{i,j}$ in $G(A_i,B_i)$ results
in an acyclic graph.
\end{enumerate}
\end{defn}

\begin{ex}[continuation of Examples~\ref{e:picard_queyranne} and~\ref{e:cont}]
Any support derived from the maximal supports in Example~\ref{e:cont}
is a valid support sequence, except for the two supports
\begin{align*}
& \big(\{x_1,x_2,x_3\},\{t_1\}\big),
  \big(\{x_4,x_5,x_6\},\{t_2,t_3,t_4,t_5\}\big),
  \big(\{x_7,x_8\},\{t_6,t_7\}\big)
\\\text{and }
& \big(\{x_1,x_2,x_3,x_4,x_5,x_6\},\{t_1,t_2,t_3,t_4,t_5\}\big),
  \big(\{x_7,x_8\},\{t_6,t_7\}\big),
\end{align*}
whose final pairs do not correspond to connected subgraphs of the
compatibility graph.
\end{ex}

\begin{lem}\label{l:valid_support_vector}
Let $X\in\cT_n$ have associated $(X,T)$-geodesic with minimal support
$(\cA, \cB)$ satisfying~(\ref{eq:strict_inequalities}), and for some
index~$i$ let $(A'_{i,1},B'_{i,1}),\ldots, (A'_{i,\ell},B'_{i,\ell})$
be a valid support sequence for $(A_i,B_i)$.  There is an element
$X'\in \cT_n$ 
%with $\Or(X')=\Or(X)$ 
in the same orthant as $X$ for which the geodesic between
$X'$ and $T$ has support\vspace{-.5ex}
\begin{align*}
  \cA'
& = A_1,\ldots,A_{i-1},A'_{i,1},\ldots,A'_{i,\ell},A_{i+1},\ldots,A_k
\\\cB'
& = B_1,\ldots,B_{i-1},B'_{i,1},\ldots,B'_{i,\ell},B_{i+1},\ldots,B_k
\\[-5ex]
\end{align*}
with
%begin{equation}\label{eq:expanded_sequence}
$$%
  \frac{\norm{A_1}}{\norm{B_1}}
< \cdots
< \frac{\norm{A_{i-1}}}{\norm{B_{i-1}}}
< \frac{\norm{A_{i,1}'}}{\norm{B_{i,1}'}}
= \cdots
= \frac{\norm{A_{i,\ell}'}}{\norm{B_{i,\ell}'}}
< \frac{\norm{A_{i+1}}}{\norm{B_{i+1}}}
< \cdots
< \frac{\norm{A_k}}{\norm{B_k}}.
$$
%end{equation}
Further, for any pair $(A'_{i,j}, B'_{i,j})$ and any partition $I_1
\cup I_2$ of~$A_{i,j}'$ and~$J_1\cup J_2$ of~$B'_{i,j}$ in which
$I_2\cup J_1$ is compatible,
%begin{equation}\label{eq:expanded_sequence2}
$$%
  \frac{\norm{I_1}}{ \norm{J_1}} > \frac{ \norm{ I_2}}{ \norm{J_2} }.
$$
%end{equation}
\end{lem}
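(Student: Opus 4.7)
The idea is to construct $X'$ by keeping the coordinates of $X$ outside $A_i$ unchanged and carefully redistributing mass within each $A'_{i,j}$. Specifically, for each $j = 1, \ldots, \ell$ I choose positive numbers $x'_e$ for $e \in A'_{i,j}$ so that $\sum_{e \in A'_{i,j}} (x'_e)^2 = r^2 \norm{B'_{i,j}}_T^2$ for a common ratio~$r$, using the remaining degrees of freedom to place the configuration in general position. Choosing $r$ inside an open interval around $\norm{A_i}_X/\norm{B_i}_T$ (narrow enough to preserve the strict inequalities of~(\ref{eq:strict_inequalities}) on either side of position~$i$) gives the prescribed (P2) chain: strict inequalities between the neighboring pairs and the refined subsequence, equalities within the subsequence by construction, and unchanged relationships among the remaining original pairs. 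Since $x'_e > 0$ iff $x_e > 0$, the orthant is preserved, and property~(P3) for the untouched pairs $(A_j, B_j)$, $j \neq i$, transfers directly from~$X$.

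The main obstacle is the strict (P3) conclusion within each sub-pair $(A'_{i,j}, B'_{i,j})$. A violation would produce a nontrivial partition $I_1 \cup I_2 = A'_{i,j}$, $J_1 \cup J_2 = B'_{i,j}$ with $I_2 \cup J_1$ compatible and $\norm{I_1}/\norm{J_1} = \norm{I_2}/\norm{J_2}$, which by the max-flow/min-cover analysis of Section~\ref{ss:residual} translates into a nontrivial $(\bar s, \bar t)$-cut in the residual graph of $F(A'_{i,j}, B'_{i,j})$ at~$X'$. To rule such cuts out, I aim to show that for a generic configuration on $A'_{i,j}$ subject to the norm constraint, the entire set $A'_{i,j} \cup B'_{i,j}$ forms a single strongly connected component in this residual graph. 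The crucial ingredient is condition~(F2): connectivity of $G(A'_{i,j}, B'_{i,j})$ is exactly what allows one to route a max flow placing positive flow on a spanning set of edges, yielding reverse edges on all of them in the residual graph and collapsing $A'_{i,j} \cup B'_{i,j}$ into one SCC. Strict (P3) is an open condition on coordinates, so once a single admissible $X'$ is exhibited, a whole open set works.

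Once strict (P3) is verified per sub-pair, Theorem~\ref{t:prelimThm} identifies $(\cA', \cB')$ as a support of the geodesic from~$X'$ to~$T$ with exactly the ratio structure claimed. The role of~(F3) in the argument is to ensure that the refinement is globally consistent: contracting each $A'_{i,j} \cup B'_{i,j}$ in $G(A_i, B_i)$ produces an acyclic graph, so the sub-pair blocks can be listed in a topological order compatible with the flow structure, and no hidden equalities among non-adjacent sub-pairs force additional combinatorial constraints. The delicate step throughout is the translation between connectivity of $G(A'_{i,j}, B'_{i,j})$ and SCC structure in the residual graph \textemdash{} an openness argument combined with the existence of a flow saturating a spanning subset of edges closes the proof.
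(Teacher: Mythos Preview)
Your overall architecture matches the paper's: leave the coordinates of $X$ outside $A_i$ unchanged, redistribute within $A_i$ so that each $\norm{A'_{i,j}}/\norm{B'_{i,j}}$ equals a common value, and then verify strict (P3) on each sub-pair using connectivity~(F2). But the genericity step is where the argument breaks.

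It is \emph{not} true that a generic point on the sphere $\sum_{e \in A'_{i,j}} \xi_e = r^2\norm{B'_{i,j}}^2$ satisfies (P3), let alone strictly. Take $A'_{i,j}=\{a_1,a_2\}$, $B'_{i,j}=\{b_1,b_2\}$ with equal $\tau$-weights and incompatibility edges $(a_1,b_1),(a_1,b_2),(a_2,b_2)$: the graph is connected, but the (P3) region on the constraint simplex is only the half $\xi_{a_1}\ge\xi_{a_2}$, so a uniformly chosen point lands outside it half the time. ``General position'' therefore does no work; you must actually exhibit one point in the strict-(P3) region, and you never do. Your sentence about routing a max flow with positive flow on a spanning set of edges is the right instinct, but it is stated backwards: the max flow is determined by the weights, not the other way around, so you cannot simply choose the flow after fixing arbitrary weights on the norm sphere.

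The paper closes this gap by reversing the order of construction. First write down a flow on $G(A_i,B_i)$ that pushes $\tilde\tau_b/\deg_j(b)$ along every edge $(a,b)$ inside each block $G(A'_{i,j},B'_{i,j})$ and zero on edges between blocks; \emph{then define} the new squared weight at $a\in A'_{i,j}$ to be whatever total flows out of~$a$, namely $\tilde\xi'_a=\sum_{b\in E_j(a)}\tilde\tau_b/\deg_j(b)$. By construction this flow saturates every source and sink edge, hence is a max flow; (F1) and~(F2) make every $\tilde\xi'_a$ strictly positive; and because the flow is strictly positive on \emph{every} edge of each $G(A'_{i,j},B'_{i,j})$, any nontrivial partition $I_1\cup I_2$, $J_1\cup J_2$ with $I_2\cup J_1$ compatible must, by connectivity, have an edge from $I_1$ to $J_2$ carrying positive flow, and that surplus forces $\norm{I_1}/\norm{J_1}>\norm{I_2}/\norm{J_2}$ directly. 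No openness or genericity argument is needed.
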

\begin{proof}
%First, if $(A_i, B_i)$ is a support pair corresponding to tree edges
%common to $T$ and $X$, then by (F2) the only valid support
%subsequences for $(A_i, B_i)$ will have $|A'_{i,j}|=|B'_{i,j}| = 1$,
%and so $X' = X$ satisfies (\ref{eq:expanded_sequence}) and
%(\ref{eq:expanded_sequence2}) vacuously.
%Now let $(A_i,B_i)$ be a nontrivial support pair.
For support pair $(A_i, B_i)$, let $\tilde\xi$ and $\tilde\tau$ be the
weights on the vertices of $G(A_i,B_i)$.  Define $X'$ by replacing the
(squared) weights on $X$ for each $a \in A'_{i,j}$ by
$$%
  \tilde\xi'_a = \sum_{b\in E_j(a)}\frac{\tilde\tau_b}{\deg_j(b)},
$$
where $E_j(a)$ is the set of vertices $b\in B'_{i,j}$ such that
$(a,b)$ is in the incompatibility graph, and $\deg_j(b)$ is the number
of edges of the incompatibility graph from $A'_{i,j}$ to~$b$.  These
values are all well-defined and positive by (F1) and (F2).  Place the
following flow $f$ on the associated flow graph: for edge $(a,b)$
where $a \in A'_{i,j}$ and $b \in B'_{i,j}$ for any $1 \leq j \leq l$,
let the flow on that edge be $\tilde\tau_b/\deg_j(b)$; for all other
edges, let the flow be~$0$.  Then the flow into node~$b$ is
exactly~$\tilde\tau_b$ and the flow out of~$a$ is
exactly~$\tilde\xi_a$.  Corollary~\ref{c:path_ordering} and
property~(F3) ensure that $f$ is a max flow with respect to the flow
graph, with flow value $\sum_{B_i}\tilde\tau_b = \sum_{A_i}\tilde\xi_a
= 1$, and since flow is conserved between each $A'_{i,j}$
and~$B'_{i,j}$, the original (un-normalized) weights satisfy
$$%
  \frac{\norm{A_{i,1}'}}{\norm{B_{i,1}'}}
= \cdots
= \frac{\norm{A_{i,\ell}'}}{\norm{B_{i,\ell}'}}
= \frac{\norm{A_i}}{\norm{B_i}}.
$$
Finally, for a pair $(A'_{i,j},B'_{i,j})$, let $I_1\cap I_2$ and
$J_1\cap J_2$ be partitions of $A'_{i,j}$ and $B'_{i,j}$ respectively,
in which $I_2\cup J_1$ is compatible.  This means that there are no
edges of $G(A'_{i,j},B'_{i,j})$ from $I_2$ to~$J_1$, and since
$G(A'_{i,j},B'_{i,j})$ is connected there must be at least one edge
going from $I_1$ to~$J_2$.  Since flow is positive on all edges of
$G(A'_{i,j},B'_{i,j})$, there is a net flow from~$I_1$ away
from~$J_1$, and from the definition of $\xi'$ it follows that
$\frac{\norm{I_1}}{\norm{J_1}} > \frac{\norm{I_2}}{\norm{J_2}}$.
\end{proof}

\subsubsection{Canonical description of vistal cells}\label{ss:canonical}

Finally, we extend Propositions ~\ref{p:cone} and \ref{p:polyhedral}
to describe all vistal cells associated with $(X,T)$-geodesics from
points~$X$ in an orthant~$\Or$.  Since a valid support sequence is
determined by the combinatorics of the splits and not by their edge
lengths, we can define the following.

\begin{defn}
A \emph{valid support sequence for $(\Or,T)$} is a support $(\cA,\cB)$
for which each maximal equality subsequence
\begin{equation}\label{eq:equality_subsequence}
\frac{\norm{A_i}}{\norm{B_i}}
= \frac{\norm{A_{i+1}}}{\norm{B_{i+1}}}
= \cdots
= \frac{\norm{A_j}}{\norm{B_j}}
\end{equation}
satisfies properties (F1)\textendash{}(F3) with respect to the pair
$(\bigcup_{\ell=i}^j A_\ell,\bigcup_{\ell=i}^j B_\ell)$.  Write
$G(\Or,T)$ for the corresponding incompatibility graph $G(\cA,\cB)$.
\end{defn}

\begin{thm}\label{t:vistal_face_combinatorics}
Fix a tree $T \in \cT_n$.\vspace{-1ex}
\begin{enumerate}\setlength\itemsep{-.5ex}
\item%
Vistal cells associated with geodesics to~$T$ are exactly those of the
form $\cV^2(T, \Or; \cA, \cB; \cS)$, where $(\cA,\cB)$ is a valid
support sequence for $(\Or,T)$ and $\cS$ is a signature on
$(\cA,\cB)$.
\item%
The dimension of the vistal cell $\cV^2(T, \Or; \cA, \cB; \cS)$ is
$\dim(\Or) - m(\cS)$, where $m(\cS)$ is the number of~``='' components
in~$\cS$.
\item%
The representation by a valid support sequence and signature is unique
up to reordering the support sets within each equality subsequence
of~$\cS$.
\end{enumerate}
\end{thm}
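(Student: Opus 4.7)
The plan is to prove the three statements in order, leveraging the combinatorial machinery of Sections \ref{ss:incompatibility}--\ref{ss:valid} (minimal supports, residual graphs, and valid support sequences) together with the polyhedral description already established in Propositions \ref{p:cone}--\ref{p:polyhedral} and the dimension criterion in Lemma \ref{l:full_dimensional}.

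For part (1), I would prove both inclusions. The ``if'' direction is essentially immediate from Lemma \ref{l:valid_support_vector}: given a valid support sequence $(\cA,\cB)$ for $(\Or,T)$ and a signature $\cS$, apply the lemma separately to each maximal equality subsequence of $\cS$ to produce a point $X' \in \Or$ whose geodesic to $T$ has support $(\cA,\cB)$ with exactly the equality/inequality pattern dictated by $\cS$ (and strict (P3)). Then $X' \in \cV^2(T,\Or;\cA,\cB;\cS)$, so the set is a genuine vistal cell by Definition \ref{d:signature}. For the ``only if'' direction, take any point $X$ in the interior of a vistal cell and let $(\cA^0,\cB^0)$ be the minimal support of its geodesic to $T$, which is unique by Lemma \ref{uniquecover}. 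Any support realizing this geodesic refines $(\cA^0,\cB^0)$ by partitioning each pair $(A_i^0,B_i^0)$; by Corollary \ref{c:path_ordering} and Corollary \ref{c:topological_orderings}, the admissible refinements are precisely those arising from upper ideals / topological orderings of $G_i^*(X)$, and the combinatorial content of this constraint is exactly conditions (F1)--(F3) on each equality block.

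For part (2), the upper bound $\dim \cV^2(T,\Or;\cA,\cB;\cS) \le \dim(\Or) - m(\cS)$ is the first assertion of Lemma \ref{l:full_dimensional}: each ``='' in $\cS$ produces one linear equation on the squared coordinates (after cross-multiplying as in the proof of Proposition \ref{p:cone}), and one checks these equations are linearly independent because they involve disjoint collections of edges $A_i \cup B_i$. Equality is attained by invoking the second half of Lemma \ref{l:full_dimensional}, whose hypotheses (V1) and (V2) are satisfied by the point $X'$ produced in part (1) via Lemma \ref{l:valid_support_vector}.

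For part (3), uniqueness of the representation follows by reversing the argument for the ``only if'' inclusion above. Given the vistal cell, pick any interior point $X$; then $(\cA^0,\cB^0)$ is determined, and the signature $\cS$ together with the equality blocks of $(\cA,\cB)$ is determined by which pairs of ratios coincide at $X$. Within a single equality block, the partition of $A_i^0 \cup B_i^0$ into the sets $\{A'_{i,j} \cup B'_{i,j}\}$ is determined by the strongly connected components of the residual graph (i.e., the vertices of $G_i^*(X)$), while the \emph{order} in which the pairs are listed corresponds to a choice of topological ordering of $G_i^*(X)$, and different topological orderings give the same vistal cell. This is exactly the claimed ambiguity.

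The main obstacle will be the ``only if'' half of part (1): one must argue that the combinatorial conditions (F1)--(F3), phrased in terms of the incompatibility graph of the abstract pair $(\Or,T)$, coincide with the structural conditions on refinements of the minimal support of a geodesic from a generic interior point $X$. This requires knowing that for an interior point of the cell, the residual graph $G_i^*(X)$ depends only on the combinatorics of $(A_i,B_i)$ (not on the precise edge lengths of $X$), so that (F3) becomes a condition on $(\Or,T)$ rather than on~$X$; this dependence is implicit in the definition of $G(\Or,T)$ and is where the interplay between Lemma \ref{l:valid_support_vector} and Corollary \ref{c:topological_orderings} must be made precise.
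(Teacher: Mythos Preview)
Your plan for the ``if'' direction of Part~1 and for Part~2 is essentially what the paper does: construct a witness point via Lemma~\ref{l:valid_support_vector} on each maximal equality block, scale the blocks so that the strict inequalities separating them hold, and then invoke Lemma~\ref{l:full_dimensional} with the linear independence of the equality constraints. That part is fine.

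The divergence is in the ``only if'' direction of Part~1 and in Part~3, and here the obstacle you flag at the end is real, not merely technical. Your route goes through the minimal support of an interior point $X$ and then argues via Corollaries~\ref{c:path_ordering}--\ref{c:topological_orderings} that the admissible refinements are governed by $G_i^*(X)$. But $G_i^*(X)$ is built from the residual graph of a max flow whose capacities depend on the edge lengths of~$X$, so its strongly connected components are not a~priori invariants of~$(\Or,T)$; making the passage from ``valid refinement at~$X$'' to the purely combinatorial conditions (F1)--(F3) precise is exactly the missing step. The paper avoids this entirely by arguing each of (F1)--(F3) directly: (F1) and (F3) fall out of positivity of~$\xi$ and the support axiom~(P1), while for (F2) one observes that if some $G(A_i,B_i)$ were disconnected into $I_1\cup J_1$ and $I_2\cup J_2$, then flow conservation forces $\norm{I_1}/\norm{J_1}=\norm{I_2}/\norm{J_2}$, so the corresponding (P3) inequality holds with equality and $\xi$ cannot be interior. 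This is shorter and sidesteps any dependence on~$X$.

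For Part~3 your argument has a further gap: you claim that within an equality block the partition into the sets $A'_{i,j}\cup B'_{i,j}$ is \emph{determined} by the strongly connected components of the residual graph. But (F1)--(F3) allow coarser partitions as well (any contiguous merge of vertices of $G_i^*(X)$ along a topological order, provided the merged incompatibility subgraph stays connected), so knowing the cell does not immediately force the finest partition. The paper instead argues purely linear-algebraically: two representations of the same cell have the same affine hull, hence row-equivalent systems of (P2) equalities; but each such equality involves only the variables in a single $A_i\cup A_{i+1}$, so if the two supports did not comprise the same sets, no row operations could separate the variables of some $A'_k$ into the distinct blocks $A_i$ and~$A_j$. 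This argument is independent of any flow or residual-graph structure and directly yields uniqueness of the \emph{sets} (with ordering free within each equality block).
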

\begin{proof}
Claim~1.  Let $\cV^2(T, \Or; \cA, \cB; \cS)$ be a vistal cell
containing an interior point~$\xi$.  The definition of support and the
fact that $\xi$ is positive implies that (F1) and~(F3) hold for
$G(\Or,T)$.  Now suppose that (F2) fails to hold; that is, some
$G(A_i,B_i)$ has a partition into two disjoint subgraphs on vertex
sets $I_1\cup J_1$ and $I_2\cup J_2$, respectively.  Let $f$ be the
max flow in $G(A_i,B_i)$.  Since (P3) is satisfied, $f$ saturates all
arcs adjacent to the source and sink.  But since flow in each of the
disjoint subgraphs $G(I_1,J_1)$ and $G(I_2,J_2)$ is self-contained,
$\frac{\norm{I_1}}{\norm{J_1}} = \frac{\norm{I_2}}{\norm{J_2}} =
\frac{\norm{A_i}}{\norm{B_i}}$.  This means that the corresponding
tree~$X$ satisfies one of its (P3) inequalities at equality, so $\xi$
cannot be in the interior of $\cV^2(T, \Or; \cA, \cB; \cS)$, a
contradiction.  Thus (F2) is also satisfied, so $(\cA,\cB)$ is a valid
support sequence with respect to~$(\Or,T)$.

Conversely, let $(\cA,\cB)$ be a valid support sequence with respect
to $(\Or,T)$.  Consider a ratio
subsequence~(\ref{eq:equality_subsequence}) with all terms equal.
Since $(\cA,\cB)$ is a valid support sequence,
Lemma~\ref{l:valid_support_vector} constructs positive weights
$X^\ell$ on the edges indexed by~$A_\ell$, for $\ell = i,\ldots,j$, so
that~(\ref{eq:equality_subsequence}) holds and all~(P3) inequalities
are strict inside each support pair.  Now for each maximal-length
equal-ratio subsequence, scale the vectors of each term by the same
positive multiplier~$\lambda_{ij}$ so that the sequence of
multipliers~$\lambda_{ij}$ is increasing with the indices.  The scaled
$x^\ell$ vectors concatenate into a vector $X$ in the interior of
$\Or$ having the correct signature indicated by~$\cS$, and for which
the (P2) inequalities hold strictly between the equal-ratio
subsequences.  The squared point~$\xi$ corresponding to~$X$ therefore
lies interior to $\cV^2(T, \Or; \cA, \cB; \cS)$, and the desired
result follows.

Claim~2.  The vector~$\xi$ constructed in the proof of Claim~1 is
positive in~$\Or$, satisfies all (P3) inequalities strictly, and
satisfies all (P2) inequalities strictly for which the corresponding
component of $\cS$ is~``$\leq$''.  Therefore the dimension of
$\cV^2(T, \Or; \cA, \cB; \cS)$ is determined entirely by the set of
equalities defined by the component of~$\cS$ that are~``=''.  Since
these are linearly independent, the dimension is as stated.

Claim~3.  Let $F = \cV^2(T, \Or; \cA, \cB; \cS)$ and $F' = \cV^2(T,
\Or'; \cA', \cB'\cS')$ be two representations of vistal cells, defined
by valid supports $(\cA, \cB)$ and $(\cA', \cB')$ respectively.  Any
permutation of support pairs within an equality
subsequence~(\ref{eq:equality_subsequence}) results in the same set of
equalities, so if the representations differ only by such a
permutation, then $F = F'$.  Conversely, suppose $F = F'$.  Since all
cell constraint inequalities other than those specified by $\cS$ are
satisfied strictly, the set of equalities dictated by~$\cS$ define the
affine hulls of~$F$ and~$F'$.  This means that the two associated
equality systems are row-equivalent.  Now suppose that the supports
$(\cA, \cB)$ and $(\cA', \cB')$ do not comprise the same sets; that
is, by symmetry the two sets $A_i$ and~$A_j$ both have nonempty
intersection with the same set~$A'_k$.  Since the variables of $A'_k$
do not appear in any other $A'_\ell$ for $\ell \neq k$, no row
transformation of the equality system for $F'$ could possibly separate
the variables in $A_i\cap A'_k$ from those in $A_j\cap A'_k$.  Thus
the two equality systems are not the same, a contradiction.
\end{proof}
\begin{cor}\label{c:disjoint}
Distinct vistal cells have disjoint relative interiors.
\end{cor}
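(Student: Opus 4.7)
The plan is to show that if $\xi$ lies in the relative interiors of two vistal cells $F = \cV^2(T, \Or; \cA, \cB; \cS)$ and $F' = \cV^2(T, \Or'; \cA', \cB'; \cS')$, then the representations $(\cA, \cB; \cS)$ and $(\cA', \cB'; \cS')$ must coincide up to the reorderings allowed in Theorem~\ref{t:vistal_face_combinatorics}(3), which forces $F = F'$ and contradicts the assumed distinctness.

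First, dispatch the routine observations. Because every relative interior point of a vistal cell satisfies $\xi_e > 0$ exactly when $e \in \cE(\Or)$, both $\Or$ and $\Or'$ are determined by the positive support of $\xi$, so $\Or = \Or'$. Next, collapsing each maximal ``$=$'' subsequence of $\cS$ in $(\cA, \cB)$ by taking unions of the sub-pairs produces a support whose ratios at $\xi$ are strictly increasing, since the ``$\leq$'' positions of $\cS$ are strict at relative interior points by condition (V1) of Lemma~\ref{l:full_dimensional}. This collapsed support still satisfies (P2) and (P3), so by Lemma~\ref{uniquecover} it is the unique minimal support $(\cA^0, \cB^0)$ of the geodesic from $\sqrt\xi$ to $T$. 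The same argument for $(\cA', \cB'; \cS')$ shows that both representations collapse to the same $(\cA^0, \cB^0)$.

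The crux is to show that within each equality subsequence of $\cS$, the refinement $(A_{i,1}, B_{i,1}), \dots, (A_{i,\ell_i}, B_{i,\ell_i})$ of $(A^0_i, B^0_i)$ is maximum-cardinality in the sense of Corollary~\ref{c:topological_orderings}, i.e., each sub-pair corresponds to a single vertex of the residual graph $G_i^*(\sqrt\xi)$.  For if some $(A_{i,j}, B_{i,j})$ corresponded to two or more such vertices, Corollary~\ref{c:topological_orderings} would supply a valid partition of it into nonempty $(I_1, J_1), (I_2, J_2)$ (via a topological ordering of the contained vertices) with $I_2 \cup J_1$ compatible by (P1) and $\norm{I_1}/\norm{J_1} = \norm{I_2}/\norm{J_2}$ both equal to the common ratio of the equality subsequence at $\xi$.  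This violates the strict (P3) required by condition (V2) of Lemma~\ref{l:full_dimensional} for $\xi$ to lie in the relative interior of $F$.  Applying the same argument to $F'$ shows that both refinements are maximum-cardinality, and Corollary~\ref{c:topological_orderings} says they then differ only by the choice of topological ordering of $G_i^*(\sqrt\xi)$: a permutation within the equality subsequence, which is precisely the equivalence allowed in Theorem~\ref{t:vistal_face_combinatorics}(3).

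The hard part is the indecomposability argument above: converting the strict (P3) condition characterizing a relative interior point into the combinatorial statement that each sub-pair of the refinement must be a singleton in $G_i^*(\sqrt\xi)$.  Once this translation is in hand, the minimal-support uniqueness of Lemma~\ref{uniquecover} and the permutation uniqueness of Theorem~\ref{t:vistal_face_combinatorics}(3) immediately finish the proof.
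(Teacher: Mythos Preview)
Your proof is correct and follows the same overall route as the paper---reduce to the uniqueness statement in Theorem~\ref{t:vistal_face_combinatorics}---but you supply considerably more detail than the paper does. The paper's argument is essentially a two-sentence appeal: a relative interior point forces the valid-support conditions (F2) and (F3), and then Theorem~\ref{t:vistal_face_combinatorics} is invoked to conclude that the two cells coincide. What the paper leaves implicit, and what you make explicit, is precisely why the valid representation is \emph{determined by the point}~$\xi$ (up to the allowed reorderings): your observation that strict (P3) at a relative interior point forces every sub-pair $(A_{i,j},B_{i,j})$ in an equality subsequence to correspond to a single vertex of~$G_i^*(\sqrt\xi)$, via Corollary~\ref{c:topological_orderings}, is exactly the missing link between ``$\xi$ lies in the relative interior'' and ``the representation is pinned down.'' Your proof therefore reads as a careful unpacking of the paper's terse citation rather than as a genuinely different approach.
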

\begin{proof}
Let $\xi$ be an element in the relative interior of two faces
in~$\cT^2_n$, given by valid representations.
% $\cV^2(T, \Or; \cA, \cB; \cS)$ and $\cV^2(T', \Or'; \cA', \cB';
% \cS')$.
Then $\xi$ satisfies (F2) and (F3) with respect to both faces, and by
Theorem~\ref{t:vistal_face_combinatorics} the only way this could
happen is if the faces coincide.
\end{proof}

%%%%%%%%%%%%%%%%%%%%%%%%%%%%%%%%%%%%%%%%%%%%%%%%%%%%%%%%%%%%%%%%%%%%%%
\subsection{Vistal subdivisions}\label{sub:vistalSub}%%%%%%%%%%%%%%%%%

Theorem~\ref{t:vistal_face_combinatorics} allows us a purely
combinatorial way of describing vistal cells.  This gives us the
machinery to prove the principal result of the section, namely that
the vistal cells are the faces of a polyhedral subdivision of tree
space under the squaring map.  To make this precise, we start with
some definitions concerning polyhedra; see \cite[Lecture~5]{ziegler}
for further background.

\begin{defn}\label{d:polyhedralComplex}
A \emph{polyhedral complex} $\Sigma$ is a finite
% nonempty
collection of polyhedra
% called \emph{cells} of~$\Sigma$,
such that
\begin{enumerate}[\quad\rm(C1)]
%item%
%Each cell is a polyhedron (actually a polytope).
%item%
%The union of the cells in $\Sigma$ is all of $L_n$.
%item%
%The relative interiors of different cells are disjoint.
\item%
every polyhedral face of every polyhedron in~$\Sigma$ is a polyhedron
in~$\Sigma$;
\item%
the intersection of any pair of polyhedra in~$\Sigma$ is a face of each.
\end{enumerate}
The \emph{dimension}\/ of~$\Sigma$ is the largest dimension of a
polyhedron in~$\Sigma$.  The \emph{facets}\/ of~$\Sigma$ are the
maximal cells.  The \emph{underlying set} of~$\Sigma$ is the union
$\bigcup_{V \in \Sigma} V$ of the polyhedra in~$\Sigma$.
\end{defn}

\begin{ex}\label{e:treeSigma}
Tree space $\cT_n$ has a \emph{natural polyhedral structure} as the
underlying space of a polyhedral complex whose polyhedra are its
orthants.  This polyhedral structure is unchanged by the squaring map,
and thus also found in $T^2_n$.
\end{ex}

The relation between vistal cells and orthants is one of refinement,
in the following sense.

\begin{defn}\label{d:subdivision}
Let $\Sigma$ and~$\Sigma'$ be polyhedral complexes.  Then $\Sigma'$ is
a \emph{subdivision} of~$\Sigma$ (it is also said that $\Sigma'$
\emph{refines}~$\Sigma$) if each polyhedron in~$\Sigma'$ is contained
in a single polyhedron in~$\Sigma$.
\end{defn}

\begin{thm}\label{t:cell_complex}
For tree space~$\cT_n$ and fixed source tree $T$, the vistal cells of
$\cT^2_n$ with respect to $T$ refine the natural
% $(2n-1)$-dimensional
polyhedral structure of~$\cT^2_n$ to form a \emph{vistal
polyhedral subdivision} of~$\cT^2_n$.
\end{thm}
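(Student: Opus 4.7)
The plan is to verify the two polyhedral-complex axioms (C1) (closure under faces) and (C2) (intersections are common faces) together with the refinement property, for the collection of vistal cells indexed by the combinatorial data $(\Or, \cA, \cB, \cS)$ from Theorem~\ref{t:vistal_face_combinatorics}. Finiteness of the collection follows since that data ranges over a finite set; each vistal cell is a convex polyhedron by Proposition~\ref{p:cone} together with the linear equalities imposed by the ``$=$'' symbols in~$\cS$; and each cell is contained in a single orthant by construction, so the refinement condition of Definition~\ref{d:subdivision} holds automatically. It remains to verify (C1) and (C2).

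For (C1), every face of a vistal cell $\cV^2(T, \Or; \cA, \cB; \cS)$ arises by activating a subset of the defining inequalities~(O), (P2), and~(P3) of Proposition~\ref{p:cone}. Each such activation corresponds to a combinatorial operation that produces another valid description in the sense of Theorem~\ref{t:vistal_face_combinatorics}: activating an inequality in (O) restricts $\Or$ to one of its face orthants; activating a (P2) inequality turns a ``$\leq$'' entry of~$\cS$ into an ``$=$''; and activating a (P3) inequality corresponds to a nontrivial $(\bar s,\bar t)$-cut in the residual graph of some support pair $(A_i,B_i)$, which by Lemma~\ref{l:residual} splits $(A_i,B_i)$ into finer pairs of equal ratio, giving a valid support sequence by Lemma~\ref{l:valid_support_vector} and Definition~\ref{d:valid}. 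Thus every face is itself a vistal cell.

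For (C2), let $F_1,F_2$ be vistal cells in orthants $\Or_1,\Or_2$ and set $P = F_1 \cap F_2$, a polyhedron. Replacing each $F_j$ by the face $F_j \cap \Or_1 \cap \Or_2$ of itself (which is a vistal cell by~(C1)), we may assume both lie in a common orthant $\Or$. Choose $\xi$ in the relative interior of~$P$; by Proposition~\ref{p:polyhedral} together with Corollary~\ref{c:disjoint}, the point $\xi$ lies in the relative interior of a unique vistal cell $F$. Within the hyperplane arrangement in~$\Or$ formed by the (P2) and (P3) hyperplanes across all valid supports for~$\Or$, the inequalities that are tight at $\xi$ cut out exactly~$F$; since $\xi \in F_j$, every inequality that is tight on~$F_j$ is also tight at~$\xi$, making $F$ a face of each $F_j$ and forcing $F \subseteq P$. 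Conversely any $\zeta \in P$ satisfies the tight inequalities of both $F_1$ and $F_2$, hence those of~$F$, so $\zeta \in F$ and therefore $P = F$. The main obstacle is this last step: distinct vistal cells are described by distinct supports, and translating the geometric statement about tight inequalities into the combinatorial specialization of supports and signatures requires the uniqueness clause of Theorem~\ref{t:vistal_face_combinatorics}(3) in order to identify the unique vistal cell~$F$ as the sought common face.
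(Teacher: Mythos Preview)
Your overall strategy---verify (C1) and (C2), with refinement automatic---matches the paper's.  Your treatment of (C2) is essentially the paper's argument in a different wrapping: the paper takes the minimal faces of $V$ and $V'$ containing $V\cap V'$, finds a point in the relative interior of each, takes the midpoint, and applies Corollary~\ref{c:disjoint}.  Your version works too, though your ``hyperplane arrangement'' language is misleading (the vistal cells are \emph{not} the faces of a single arrangement in~$\Or$, since the (P3) hyperplanes depend on the support), and your final sentence about $\zeta$ satisfying ``the tight inequalities of both $F_1$ and $F_2$, hence those of~$F$'' is not the right justification---what you need is the standard face property: since $\xi$ lies in the relative interior of~$P$ and $F$ is a face of~$F_1\supseteq P$ containing~$\xi$, the whole of~$P$ must lie in~$F$.

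The real gap is in (C1).  You treat each of the three inequality types in one line, but the paper spends most of its proof on the complications you skip.  For the nonnegativity constraints~(O): setting $\xi_e=0$ does \emph{not} simply restrict to a face orthant with the same support.  Removing~$e$ from the incompatibility graph $G(A_i,B_i)$ can disconnect it into components $(A'_1,B'_1),\ldots,(A'_\ell,B'_\ell)$, which must then be inserted into the support with ``$=$'' signs between them to restore validity (condition~(F2) of Definition~\ref{d:valid}).  Worse, one of the resulting $A'_j$ can be empty, forcing the corresponding ratio---and by~(P2) every earlier ratio---to~$0$, so all of $A_1\cup\cdots\cup A_i$ collapses to zero and those edges become common edges of $X$ and~$T$.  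For the (P3) constraints the same issues arise: after splitting $(A_i,B_i)$ via the partition $(I_1,J_1),(I_2,J_2)$, the pieces $G(I_1,J_1)$ and $G(I_2,J_2)$ may themselves be disconnected, and some component may have empty $A$-part (forcing ratio~$0$ and collapse of earlier pairs) or empty $B$-part (forcing ratio~$\infty$ and collapse of later pairs).  Your appeal to Lemma~\ref{l:residual} gives the split, but Lemma~\ref{l:valid_support_vector} does not by itself guarantee that the resulting sequence is a \emph{valid} support sequence describing exactly the face in question; you must track these degenerations explicitly, as the paper does, to exhibit the face as a vistal cell $\cV^2(T,\Or';\cA',\cB';\cS')$.
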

\begin{proof}
% old (C1) follows from Propositions~\ref{p:cone}.
The vistal cells are polyhedra whose union is~$\cT^2_n$ by
Propositions~\ref{p:cone} and~\ref{p:polyhedral}.

% old (C2) follows from the definition of $\Sigma$.

% \comment{EM: old (C3) follows formally from old (C5).  That said,
% the disjointness seems to be needed for the proof of new (C2), so
% I've simply moved this paragraph to the relevant place (the end of
% the proof).}

% \comment{EM: (C1) is old (C4)}
For (C1), we show that changing any of the inequalities defining a
vistal cell to equality results in a set that can be described as a
vistal cell.  Let $V = \cV^2(T, \Or; \cA, \cB; \cS)$ be a vistal cell,
so that by Lemma~\ref{l:valid_support_vector}, $(\cA, \cB)$ is a valid
support sequence, and let~$F$ be a proper face of~$V$ obtained by
setting one of its boundary inequalities to equality.  There are three
types of inequalities that define~$F$: (P2) constraints, nonnegativity
constraints, and (P3) constraints.

For the (P2) constraints, consider the inequality
$\frac{\norm{A_i}}{\norm{B_i}} <
\frac{\norm{A_{i+1}}}{\norm{B_{i+1}}}$, where the corresponding
component of the signature~$\cS$ is~``$\leq$''.  Let $\cS'$ be
obtained from $\cS$ by setting this inequality to~``=''.  Since
neither $\Or$ nor $(\cA, \cB)$ has changed, this constitutes a valid
support sequence, and $F = \cV^2(T, \Or; \cA, \cB; \cS')$.

For the nonnegativity constraints, consider the inequality $x_e > 0$,
where $e$ is a split indexing a coordinate of~$\Or$.  Let $A_i$ be the
set containing $e$.  Now remove $e$ from $G(\Or,T)$.  This splits
$G(A_i,B_i)$ into components corresponding to partitions
$(A'_1,B'_1),\ldots,(A'_\ell,B'_\ell)$ of $(A_i,B_i)$.  Because these
partitions correspond to separate components in $G(A_i, B_i)$, they
can appear in any order in a valid support sequence for $F$.
% without violating~(P2).
Thus every point in $F$ must satisfy every (P2) inequality between the
pairs $(A'_i,B'_i)$ at equality, since otherwise the $(A'_j,B'_j)$
sets could be interchanged so that some (P3) condition is violated.
First consider the case where all of the $A'_j$ are nonempty.  Define
the support $(\cA',\cB')$ by inserting $(A'_1,B'_1), \ldots,
(A'_\ell,B'_\ell)$ in place of $(A_i,B_i)$ in $(\cA,\cB)$:
\begin{align*}
  (\cA',\cB')
  = (A_1,B_1),\ldots,(A_{i-1},B_{i-1}),(A'_1,B'_1),& \ldots,
  (A'_\ell,B'_\ell),\\
  &(A_{i+1},B_{i+1}),\ldots,(A_k,B_k)
\end{align*}
and extend the signature~$\cS$ to~$\cS'$ by adding ``='' signs between
each of the sets in the primed subsequence.  Then $(\cA',\cB')$ is
valid, and $F = \cV^2(T, \Or\setminus\{e\}; \cA', \cB'; \cS')$.

Now suppose that one of the support pairs $(A'_j,B'_j)$ has
$A'_j=\nothing$.  The associated ratio must be~$0$, which implies in
turn that every ratio corresponding to the pairs
$(A'_1,B'_1),\ldots,(A'_\ell,B'_\ell)$ is~$0$.  Furthermore, the
ratios are also~$0$ for any earlier support pairs.
%Let $p$ be the smallest index of a support pair not containing common edges.
So $x_f = 0$ for every $f \in \oH_i = %A_p
A_1 \cup \cdots \cup A_i$.  In this case set\vspace{-.25ex}
\begin{align*}
\ol\Or{}'
&= \Or\setminus \oH_i
\\
(\ol\cA{}',\ol\cB{}')
%&= (A_1, B_1), \ldots, (A_{p-1},B_{p-1}),(\nothing, B_p \cup \cdots
 %  \cup B_i),(A_{i+1},B_{i+1}),\ldots,(A_k,B_k)
&=(A_{i+1},B_{i+1}),\ldots,(A_k,B_k)
\\
\ol\cS{}'
%&= \text{the first $p-1$ terms of~$\cS$, then ``$<$'' followed by
&=\text{$\cS$ restricted to the last $k - i$ pairs of the sequence.}
\end{align*}
By Remark~\ref{r:common_edges} we have been ignoring the non-positive
ratios; however, they still exist if there are common edges
between~$X$ and~$T$.  In this case, the edges $B_1 \cup \cdots \cup
B_i$ become common edges, and are added to the $0$-valued ratio if it
already exists, or form it anew, if it does not.  Again
$(\ol\cA{}',\ol\cB{}')$ is valid, and $F=\cV^2(T, \ol\Or{}';
\ol\cA{}', \ol\cB{}'; \ol\cS{}')$.

Next consider the (P3) constraints.  For some support pair $(A_i,B_i)$
let $I_1\cup I_2$ and $J_1\cup J_2$ be partitions of $A_i$ and $B_i$
with $I_2\cup J_1$ compatible, and consider the constraint
$$%
  \frac{\norm{I_1}}{\norm{J_1}} > \frac{\norm{I_2}}{\norm{J_2}}.
$$
Let $(A'_1,B'_1),\ldots,(A'_k,B'_k)$ and
$(A'_{k+1},B'_{k+1}),\ldots,(A'_\ell,B'_\ell)$ be pairs corresponding
to the components of $G(I_1,J_1)$ and $G(I_2,J_2)$, respectively.

First consider the case where all of the $A'_j$ and~$B'_j$ are
nonempty.  The same nonempty sets argument as above applies, and we
obtain the the face $F = \cV^2(T, \Or; \cA', \cB'; \cS')$ with $(\cA',
\cB')$ and~$\cS'$ defined as in the nonempty-set case above.

Next suppose that one of the sets $(A'_j,B'_j)$ has $A'_j=\nothing$.
As in the empty-set case above, this forces $x_f$ to be 0 for every
% $f\in S_i=A_j\cup\cdots\cup A_i$,
$f \in S_i = A_1 \cup \cdots \cup A_i$,
% for the $j >0 $ such that $(A_{j-1},B_{j-1})$ contains common edges,
% but $(A_j, B_j)$ does not,
and so $F = \cV^2(T, \ol\Or{}'; \ol\cA{}', \ol\cB{}'; \ol\cS{}')$
with~$\ol\Or{}'$, $(\ol\cA{}', \ol\cB{}')$ and~$\ol\cS{}'$ defined as
in the empty-set case above.

Now suppose that one of the sets $(A'_j,B'_j)$ has $B'_j = \nothing$.
This forces the ratios for every pair in
$(A'_1,B'_1),\ldots,(A'_\ell,B'_\ell)$ to be $\infty$, which in turn
means that $x_f=0$ for every $f\in \tilde S'_i=B_{i+1}\cup\cdots\cup
B_k$.  Thus if we define
\begin{align*}
\wt\Or'
&= \Or\setminus \tilde S'_i
\\
(\tilde\cA',\tilde\cB')
&= (A_1,B_1),\ldots,(A_{i-1},B_{i-1})
%,(A_i\cup\cdots\cup A_k,\nothing)
\\
\tilde\cS'
&= \cS\text{ restricted to the first }i-1\text{ pairs of the sequence,}
%   followed by ``$<$'',}
\end{align*}
then again $(\tilde\cA',\tilde\cB')$ is a valid sequence, and so we
obtain the face
 $F=\cV^2(T, \tilde\Or';\allowbreak \tilde\cA', \tilde\cB';\cS')$.  As before, the edges $A_i \cup \cdots \cup A_k$ become common
edges, and hence be added to the $\infty$-valued ratio if it exists
and otherwise form that ratio.

Finally, suppose that there are pairs $(A'_{j'},B'_{j'})$ and
$(A''_{j''},B''_{j''})$ with $A'_{j'}=B''_{j''}=\nothing$.  This
forces all of the $x_f$ where $f$ is not a common edge to be~$0$, and
we just get the face corresponding to the common edges.

% \comment{EM: (C2) is old (C5)}
% For (C2), suppose that $V$ and~$V'$ are vistal cells.  Since $V$ and
% $V'$ are polyhedra, their intersection $V \cap V'$ is also a
% polyhedron, say of dimension~$k$.  \comment{EM: I don't understand
% the following deduction:} $V \cap V'$ is therefore contained in
% $k$-dimensional faces $F$ and~$F'$ of $V$ and~$V'$, respectively,
% which by (C4) are vistal cells \comment{EM: why can't $F$ and~$F'$
% have dimension strictly bigger than~$k$---for example, two planar
% disks in 4-space can intersect at a point that lies in the relative
% interior of both}.

For (C2), suppose that $V$ and~$V'$ are vistal cells, so that $V \cap
V'$ is a convex polyhedron.  Let $F \subseteq V$ and $F' \subseteq V'$
be minimal faces of $V$ and~$V'$, respectively, containing $V \cap
V'$.  Then by (C1), $F$ and~$F'$ are vistal cells, and since $F$ and
$F'$ are minimal, then there must be a $p \in V \cap V'$ in the
relative interior of~$F$ and a $p' \in V \cap V'$ in the relative
interior of~$F'$.  It follows that the midpoint of the line segment
joining $p$ to~$p'$ must lie in the relative interiors of both~$F$
and~$F'$, and Corollary~\ref{c:disjoint} then implies that $F = F'$.
Thus $F = F' \subseteq V \cap V'$, whence $V \cap V'= F = F'$ and (C2)
follows.
\end{proof}

\subsection{Examples of vistal complexes}\label{sub:vistalEx}%%%%%%%%%

\begin{ex}\label{e:cell}
To demonstrate Theorem~\ref{t:cell_complex}, consider the
incompatibility graph from Figure~\ref{f:picard_queyranne} and treat
it as the incompatibility graph for two trees $T$ and~$X$.  Take
values on~$T$ as given in the figure, and consider the vistal cell $V
= \cV^2(T, \Or; \cA, \cB; \cS)$ defined by
\begin{align*}
\Or
&= \{x_1,x_2,x_3,x_4,x_5,x_6,x_7,x_8\}
\\
(\cA,\cB)
&= \big(\{x_1,x_2,x_3\}, \{t_1\}\big),
   \big(\{x_4,x_5,x_6,x_7,x_8\},\{t_2,t_3,t_4,t_5,t_6,t_7\}\big)
\\
\cS
&= (\leq).
\end{align*}
This is a valid sequence, and in particular, using
Lemma~\ref{l:valid_support_vector} we can assign weights as follows.
\[
  \begin{array}{|cccccccc|}
  \hline
  x_1&x_2&x_3&x_4&x_5&x_6&x_7&x_8\\
  \hline
  &&&&&&&\\[-.8em]
  2&2&2&3\frac{1}{3}&4\frac{1}{3}&5\frac{1}{2}&\frac{1}{3}&\frac{1}{2}\\[.2em]
  \hline
  \end{array}
\]
(The first three weights have additionally been scaled so that (P2) is
satisfied strictly.)  Here are examples of the three types of faces
of~$V$.
\begin{bullets}
\item%
Setting the single (P2) constraint to equality: this gives the face
corresponding to the numbers in Figure~\ref{f:picard_queyranne}.

\item%
Setting $x_j=0$: for $j \neq 5,6$ the face has the same structure as
the cell~$V$, except that $x_j$ is removed from the corresponding
sets.  For $j= 5,6$, removal of $x_j$ disconnects $(A_2,B_2)$ by
isolating $t_4$ or $\{t_3, t_5\}$,
respectively, and thus setting $x_5$ or $x_6$ to 0 collapses the face
to the single origin point.

\item%
Setting the (P3) constraint with $I_1=\{x_4,x_5,x_6\}$,
$J_1=\{t_2,t_3,t_4,t_5\}$, $I_2=\{x_7,x_8\}$, and $J_2=\{t_6,t_7\}$ to
equality: here
$$%
  \frac{\norm{I_1}^2}{\norm{J_1}^2}
  =
  \frac{79}{66}>\frac{5}{12}
  =
  \frac{\norm{I_2}^2}{\norm{J_2}^2}.
$$
Now $G(I_2,J_2)$ is not connected, and has nontrivial components on
vertex sets $\{x_7,t_6\}$ and $\{x_8,t_7\}$.  Thus the face obtained
by setting the above inequality to equality is $\cV^2(T, \Or; \cA',
\cB'; \cS')$, where
\begin{align*}
(\cA',\cB')
&= \big(\{x_1,x_2,x_3\}, \{t_1\}\big),
   \big(\{x_4,x_5,x_6\},\{t_2,t_3,t_4,t_5\}\big),
   \big(\{x_7\},\{t_6\}\big), \\
   &\qquad\qquad\qquad\qquad\qquad\qquad\qquad\qquad\qquad\qquad\qquad\qquad\big(\{x_8\},\{t_7\}\big)
\\
%(\cA',\cB')&=&\big((\{x_1,x_2,x_3\},
%\{t_1\}),(\{x_4,x_5,x_6\},\{t_2,t_3,t_4\}),(\{x_7\},\{t_6\}),(\{x_8\},\{t_7\})\big)\\
\cS'
&= (\leq,=,=).
\end{align*}\vspace{-4ex}
\end{bullets}
\end{ex}

\begin{ex}\label{e:subdivision}
Figure~\ref{f:example_complex} gives the restriction of a vistal
polyhedral subdivision to a maximal orthant in~$\cT_5$.  The trees are
depicted in Figure~\ref{f:original_trees}, with \mbox{$t_1 = t_2 = t_3
= 1$}.  Figures~\ref{f:vistal_sub_unsquared} and~\ref{f:vistal_sub}
\begin{figure}
\centering
\subfigure[Trees $X$ and $T$.]{
\includegraphics[trim=0cm 0cm 0cm 0.5cm,clip=true,scale=0.35]{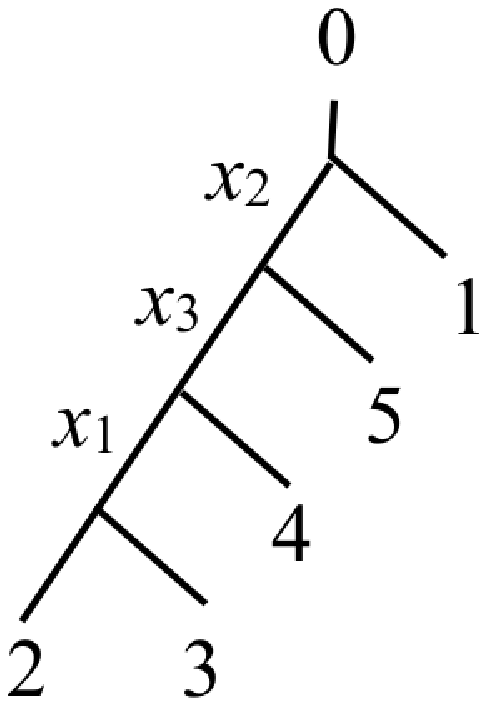}
\quad
\includegraphics[trim=0cm 0cm 0cm 0.5cm,clip=true,scale=0.35]{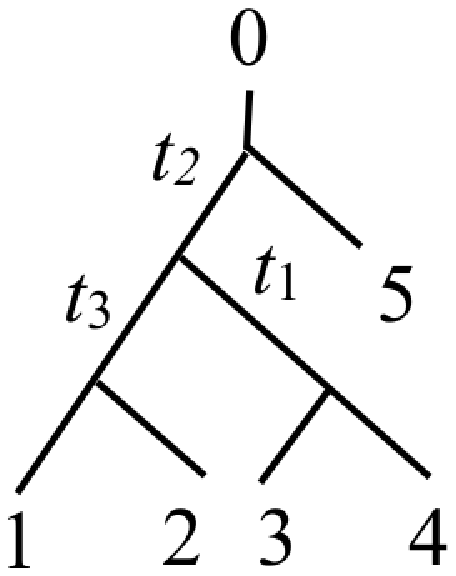}
\label{f:original_trees}
}
\subfigure[A cross-section
of the orthant corresponding to tree topology $X$ before the squaring map.]%
{\includegraphics[width=120mm]{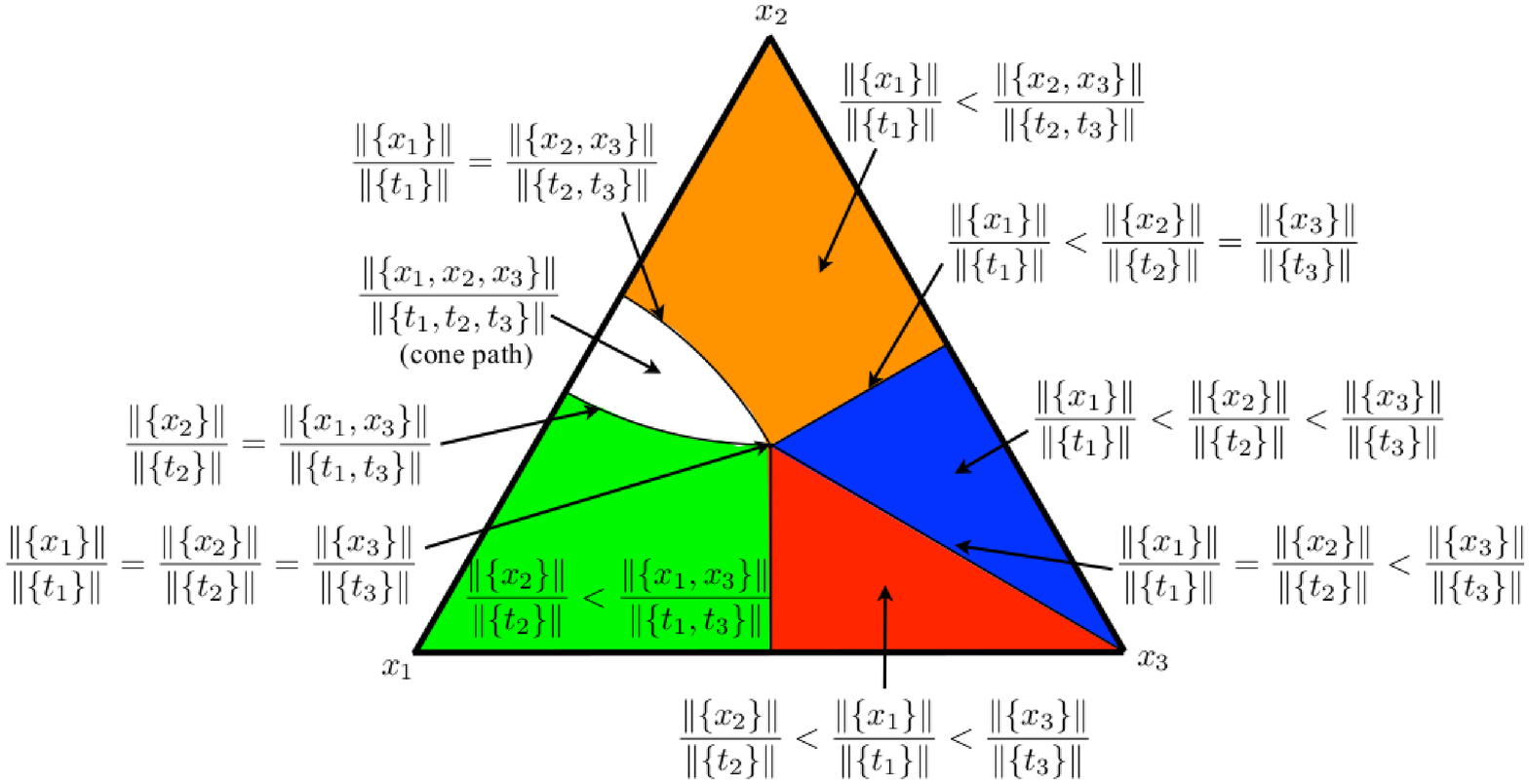}\label{f:vistal_sub_unsquared}}
\subfigure[A cross-section
of the orthant corresponding to tree topology $X$ under the squaring map.  Vistal cells are labelled as in Figure~\ref{f:vistal_sub_unsquared}]%
{\includegraphics[width=70mm]{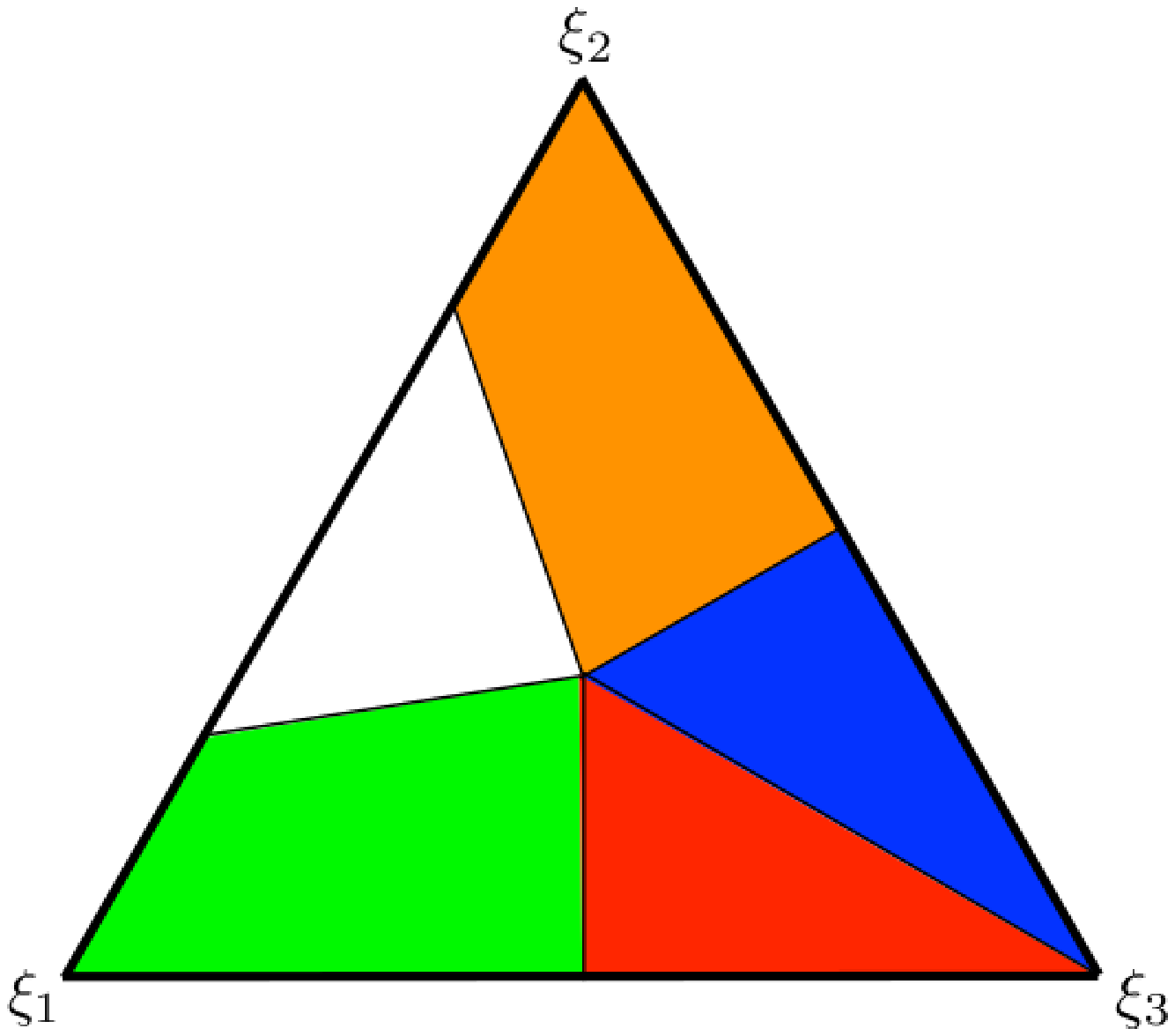}\label{f:vistal_sub}}
\caption{The vistal polyhedral subdivision between variable tree $X$ and fixed tree $T$
 in~$\cT_5$.\label{f:vistal_sub_all}}
\centering
\label{f:example_complex}
\end{figure}
depict the vistal cells in orthant $\Or(\{x_1,x_2,x_3\})$ before and under the 
squaring map, respectively,
as they intersect
with the sets \mbox{$x_1 + x_2 + x_3 = 1$} and
\mbox{$\xi_1 + \xi_2 + \xi_3 = 1$}, respectively.
The vistal cells are
labeled by the corresponding ratio sequences, using ``='' or
``$<$'' to indicate the behavior of points in the interior of the
cell.  We also label the six cells of lower dimension that are
the intersections of the vistal facets.
Some of the vistal cells before the squaring map are 
not polyhedral, because the boundary equations are those 
of circular cones.
\end{ex}

%%%%%%%%%%%%%%%%%%%%%%%%%%%%%%%%%%%%%%%%%%%%%%%%%%%%%%%%%%%%%%%%%%%%%%
\subsection{Multivistal complexes}\label{sub:multivistal}%%%%%%%%%%%%%

It is a straightforward matter to extend vistal cells to the case
where there is a collection ${\bm T} = \{T^1,\ldots,T^r\}$ of source
trees in $\cT_n$, and we are interested in the set of points $X\in
\cT_n$ for which the geodesic to each tree in ${\bm T}$ has a
specified combinatorial structure.
%This is a more relevant property when we are studying properties of
%the mean of the set ${\bm T}$.
\begin{defn}\label{d:multivistal_def}
A \emph{premultivistal cell} for a collection $\bm T$ of trees is a
set of the form
$$%
  \cV(\bm{T};\Or;\cA^{\bm T}, \cB^{\bm T}) =
  \bigcap_{\ell=1}^r \cV(T^\ell,\Or;\cA^\ell, \cB^\ell),
$$
where $\cV(T^\ell,\Or; \cA^\ell, \cB^\ell)$ are previstal cells, $\Or
\subseteq \cT_n$ is an orthant, and
$$%
  (\cA^{\bm T}, \cB^{\bm T})
  =
  \big\{(\cA^1,\cB^1)\ldots,(\cA^r,\cB^r)\big\}
$$
is a collection of support pairs for $(T^i,X)$-geodesics.  A
\emph{multivistal cell} is the image in~$\cT_n^2$ of a premultivistal
cell.
\end{defn}

\begin{cor}\label{c:multivistal}
% For a fixed set ${\bm T}$ of source trees, the collection of links
% of multivistal cells form a $(2n-1)$-dimensional polyhedral cell
% complex on $L_n$.
The multivistal cells of tree space~$\cT_n$ for any fixed set source
trees refine the natural polyhedral structure of~$\cT_n$ to form a
\emph{multivistal polyhedral subdivision} of~$\cT_n$.
\end{cor}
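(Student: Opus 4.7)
The plan is to derive Corollary~\ref{c:multivistal} from Theorem~\ref{t:cell_complex} by recognizing the multivistal subdivision as the common refinement of the $r$ vistal subdivisions $\Sigma^1,\ldots,\Sigma^r$ of $\cT_n^2$ associated with the source trees $T^1,\ldots,T^r$. By Definition~\ref{d:multivistal_def}, each multivistal cell $\cV^2(\bm T;\Or;\cA^{\bm T},\cB^{\bm T})$ is precisely an intersection $\bigcap_{\ell=1}^r V^\ell$ in which $V^\ell\in\Sigma^\ell$ is a vistal cell and all $V^\ell$ lie in the common orthant $\Or$. Thus the multivistal cells are exactly the nonempty cells of the common refinement of $\Sigma^1,\ldots,\Sigma^r$, and what has to be shown is that this common refinement is a polyhedral subdivision refining the orthant structure.

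First I would dispatch the easy properties. Each $V^\ell$ is a convex polyhedron contained in $\Or$ by Proposition~\ref{p:cone}, so each multivistal cell $\bigcap_\ell V^\ell$ is a finite intersection of polyhedra in a single orthant, hence a polyhedron lying in that orthant; this already forces refinement of the natural polyhedral structure of $\cT_n^2$. Covering is equally immediate: any $\xi\in\cT_n^2$ lies in some cell of each $\Sigma^\ell$, and those $r$ cells must share the orthant containing $\xi$, so $\xi$ lies in a multivistal cell. For the intersection axiom (C2), given $V=\bigcap_\ell V^\ell$ and $V'=\bigcap_\ell W^\ell$ I would apply (C2) for each $\Sigma^\ell$ to write $G^\ell := V^\ell\cap W^\ell$ as a common face of $V^\ell$ and $W^\ell$; then $V\cap V'=\bigcap_\ell G^\ell$ is cut out of $V$ (respectively $V'$) by adjoining the defining equalities of $G^\ell$ inside $V^\ell$ (respectively $W^\ell$), exhibiting it simultaneously as a multivistal cell and as a face of both $V$ and $V'$.

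The one step that needs real care is the face axiom (C1): setting a single defining inequality of $V=\bigcap_\ell V^\ell$ to equality must yield another multivistal cell. The key observation is that every defining inequality of $V$ is inherited from a unique factor $V^{\ell_0}$, so setting it to equality replaces $V^{\ell_0}$ by one of its faces $F^{\ell_0}$ while leaving the other factors unchanged, giving the face $F = F^{\ell_0}\cap\bigcap_{\ell\neq\ell_0}V^\ell$ of $V$. The main obstacle is bookkeeping combinatorial labels: as the proof of Theorem~\ref{t:cell_complex} shows, taking a face of a vistal cell may force relabelling the orthant, the support $(\cA^{\ell_0},\cB^{\ell_0})$, and the signature $\cS^{\ell_0}$, and the resulting orthant may be a proper face $\Or'\subseteq\Or$. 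However, Theorem~\ref{t:cell_complex} already produces an explicit valid triple $(\Or',(\cA'{}^{\ell_0},\cB'{}^{\ell_0}),\cS'{}^{\ell_0})$ for $F^{\ell_0}$, and intersecting each $V^\ell$ ($\ell\neq\ell_0$) with $\Or'$ is again a vistal cell for $T^\ell$ by iterated application of (C1) to $\Sigma^\ell$; reassembling these yields an expression of $F$ in the form required by Definition~\ref{d:multivistal_def}. Iterating this over the defining inequalities of $V$ produces every face of $V$ as a multivistal cell, which completes~(C1) and with it the corollary.
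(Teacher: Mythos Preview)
Your proof is correct and follows the same idea as the paper's: the multivistal subdivision is the common refinement of the individual vistal subdivisions supplied by Theorem~\ref{t:cell_complex}. The paper's argument is simply the one-line observation that the common refinement of finitely many polyhedral subdivisions of a fixed polyhedral complex is again a polyhedral subdivision, whereas you have unpacked that observation by verifying (C1) and (C2) directly.
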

\begin{proof}
The common refinement of any finite collection of polyhedral
subdivisions of a given polyhedral complex is a polyhedral subdivision
of the same polyhedral complex, and so the result follows from
Theorem~\ref{t:cell_complex}.
\end{proof}

% We define the \emph{multivistal complex} as the fan of cones
% generated by the complex given in Corollary~\ref{c:multivistal}.

\begin{rem}\label{r:cw}
The previstal cells for any fixed source tree form a subdivision
of~$\cT_n$, called a \emph{premultivistal complex}, that is the image
of the corresponding multivistal polyhedral subdivision of~$\cT^2_n$
under the inverse $\xi \to \sqrt\xi$ of the squaring map, which is a
homeomorphism.  However, the
% (links of the)
cells in this subdivision are not polyhedral.  One might hope that the
premultivistal complex is a \emph{CW~complex}, in the standard
topological sense (see \cite{munkres}, for example),
% A \emph{CW~complex} is a collection of \emph{cells}, with each cell
% homeomorphic to an $n$-ball for some $n$, and which satisfy
% conditions (C2), (C3), and (C5).  (See \cite{munkres} for
% definitions and additional background.)
but it is not, for the same reason that multivistal polyhedral
subdivisions are not CW~complexes: the closed cells are not images of
closed balls under continuous maps (a cone of positive dimension fails
to be compact).  The situation can be
% rectified
remedied by considering the \emph{link} $L_n$ of the origin
in~$\cT^2_n$, namely the set of trees whose edge lengths sum to~$1$.
Intersecting $L_n$ with any multivistal polyhedral subdivision yields
a polyhedral CW~complex whose preimage under the squaring map is a
(non-polyhedral) CW~complex.  Thus a premultivistal complex is
essentially a (noncompact, unbounded) cone over a CW~complex.
% the failure of a premultivistal complex to be a CW~complex is a
% homeomorphic image of the same failure of the multivistal polyhedral
% subdivision to be a CW~complex.
\end{rem}

% \begin{cor}\label{c:premultivistal}
% For a fixed set ${\bm T}$ of source trees, the collection of
% premultivistal cells form the fan of a $(2n-1)$-dimensional CW~complex
% on (the link of the origin of) $\cT_n$.
% \end{cor}
% \begin{proof}
% The inverse of the squaring map $\xi\to \sqrt{\xi}$ is a homeomorphism
% on each orthant, and hence all previstal cells are homeomorphic to
% closed balls \comment{no, unbounded polyhedra aren't closed balls}.
% \end{proof}

\begin{rem}\label{r:vistalExp}
In general, the number of vistal facets is exponential in~$n$, even
within a single orthant \cite{Owen10}.  Thus an efficient method to
move through the vistal facets -- or prune the list of relevant ones
-- would likely improve calculation time of the mean.
\end{rem}

%%%%%%%%%%%%%%%%%%%%%%%%%%%%%%%%%%%%%%%%%%%%%%%%%%%%%%%%%%%%%%%%%%%%%%
\section{Computing the mean in tree space}\label{s:computing}%%%%%%%%%
%%%%%%%%%%%%%%%%%%%%%%%%%%%%%%%%%%%%%%%%%%%%%%%%%%%%%%%%%%%%%%%%%%%%%%

Although the algorithm to calculate the mean in
Section~\ref{sub:sturmAlg} is simple and seems to perform well for
small data sets, Remark~\ref{r:precision} indicates that its
convergence rate is sublinear, so in theory it is a poor iterative
method.
% for finding the mean.
This section outlines a general framework for a descent method to find
the mean of a set $T^1,\ldots,T^r$ of $n$-trees.
%, at least at the local level.
It promises to accelerate the convergence considerably by generalizing
powerful nonlinear programming techniques to apply to optimization in
tree space.

%%%%%%%%%%%%%%%%%%%%%%%%%%%%%%%%%%%%%%%%%%%%%%%%%%%%%%%%%%%%%%%%%%%%%%
\subsection{Optimality criteria}\label{sub:optimality}%%%%%%%%%%%%%%%%

We start by analyzing the variance function $S(x)$ of a variable point
$X\in\cT_n$ whose components are represented by the variable vector
$x$.  For $\ell=1,\ldots,r$, let $\geod_\ell$ be the geodesic from $X$
to $T_\ell$, with associated support pair $(\cA^\ell,\cB^\ell)$.  By
summing the lengths $L(\geod_{\ell})$ of these geodesics as given by
Eq.~\eqref{eq:pathlength2}, write the variance in~$\cT_n$~as
\begin{equation*}
  S(x)
  =
  \sum_{\ell=1}^r (L(\geod_{\ell}))^2
  =
  \sum_{\ell=1}^r
    \left[
    \sum_{i=1}^{k_\ell}\big(\norm{A^\ell_i}+\norm{B^\ell_i}\big)^2
    \right]
\end{equation*}
with its derivative given by Eq.~\eqref{eq:partialS}.  Consider this
function in its $\cT^2_n$-version $S^2(\xi)$ as given in
Definition~\ref{d:squaring}.  Using the notation
$$%
  \bar \xi^\ell_i= \sum_{e\in A^\ell_i}\xi_e
$$
and
$$%
\delta^\ell_i
  = \left\{\begin{array}{@{}ll}
    +1&\text{if }A^l_i\text{ and }B^l_i\text{ are disjoint}\\[.2em]
    -1&\text{if }A^l_i = B^l_i\text{ are made up of common edges},\\
    \end{array}\right.
$$
then the corresponding pullback function for $\xi \in \cT_n^2$ can be
derived from Eqs.~\eqref{eq:S} and~\eqref{d:var}:
\begin{equation}\label{eq:squared variance formula}
  S^2(\xi)
  =
  \sum_{\ell=1}^r\sum_{i=1}^{k_\ell} \Big(\delta^\ell_i\sqrt{\bar
  \xi^\ell_i} + \norm{B^\ell_i}\Big)^2.
\end{equation}
If $i(e,\ell)$ denotes the index of the set~$A^\ell_i$ containing~$e$,
then the gradient of~$S^2$ can be obtained from Eq.~\eqref{eq:squared
variance formula}:
\begin{equation*}
  \frac{\del S^2(\xi)}{\del \xi_e}
  =
  \sum_{\ell=1}^r \Bigg(1+\delta^\ell_{i(e,\ell)}
  \frac{||B^\ell_{i(e,\ell)}||}{\sqrt{\bar \xi^l_{i(e,\ell)}}}\Bigg).
\end{equation*}

The differentiability of~$S$ transfers to~$S^2$, as well.

\begin{cor}\label{t:squarediff}
The function $S^2(\xi)$ is continuously differentiable on the interior
of every maximal orthant~$\Or$.
\end{cor}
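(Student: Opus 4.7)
The plan is to deduce continuous differentiability of $S^2$ on the interior of each maximal orthant directly from the corresponding property of $S$ established in Theorem~\ref{t:diff}, using the fact that the squaring map and its inverse are both smooth on the relevant region. Specifically, by Definition~\ref{d:squaring} and Eq.~\eqref{e:squared_variance}, we have $S^2(\xi) = S(\sqrt{\xi}\,)$, where $(\sqrt{\xi}\,)_e = \sqrt{\xi_e}$. In the interior of a maximal orthant $\Or$ of $\cT^2_n$, every coordinate $\xi_e$ is strictly positive, so the coordinate-wise square-root map $\xi \mapsto \sqrt{\xi}$ is $C^\infty$ and sends the interior of $\Or$ diffeomorphically onto the interior of the corresponding maximal orthant in $\cT_n$.

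Since Theorem~\ref{t:diff} gives that $S$ is continuously differentiable on the interior of every maximal orthant in $\cT_n$, the chain rule applied to the composition $S \circ \sqrt{\,\cdot\,}$ immediately shows that $S^2$ is continuously differentiable on the interior of $\Or$. Concretely, the chain rule gives
\[
  \frac{\del S^2(\xi)}{\del \xi_e}
  =
  \frac{1}{2\sqrt{\xi_e}}\cdot\frac{\del S(x)}{\del x_e}\bigg|_{x=\sqrt{\xi}},
\]
and in the interior of $\Or$ the factor $1/(2\sqrt{\xi_e})$ is continuous, as is $\del S/\del x_e$ by Theorem~\ref{t:diff}; their product is therefore continuous.

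As a consistency check, one can also verify the result directly from the explicit formula derived just before the corollary statement: within a fixed vistal cell the support pairs $(\cA^\ell, \cB^\ell)$ are constant, and the gradient of $S^2$ is a sum of terms of the form $1 + \delta^\ell_{i(e,\ell)} \norm{B^\ell_{i(e,\ell)}}/\sqrt{\bar\xi^\ell_{i(e,\ell)}}$, which is manifestly continuous on the interior of $\Or$ provided $\bar\xi^\ell_{i(e,\ell)} > 0$. The only subtlety is when one crosses from one vistal cell to another while staying inside $\Or$: there the same argument as in the proof of Theorem~\ref{t:diff} applies, since by Lemma~\ref{uniquecover} the ratios $\norm{B^\ell_i}/\sqrt{\bar\xi^\ell_i}$ agree on any two adjacent representations of the same geodesic.

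The only (very mild) obstacle is making sure that the formal reduction to Theorem~\ref{t:diff} is clean across boundaries of vistal cells inside the same maximal orthant; but this is already handled at the level of $S$ in Theorem~\ref{t:diff}, and the chain rule then transfers the conclusion to $S^2$ without further work.
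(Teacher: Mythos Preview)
Your proof is correct and follows essentially the same approach as the paper: the paper's argument is simply that the inverse of the squaring map is continuously differentiable on the interior of~$\Or$, so Theorem~\ref{t:diff} applies via the chain rule. Your additional consistency check and discussion of vistal-cell boundaries are accurate but unnecessary, since those issues are already absorbed into Theorem~\ref{t:diff}.
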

\begin{proof}
The inverse of the squaring map is continuously differentiable on the
interior of~$\Or$.  Now apply Theorem~\ref{t:diff}.
\end{proof}

The function $S^2(\xi)$ is not necessarily convex on~$\cT_n^2$.  By
Proposition~\ref{p:convex}, however, it does have a unique local
minimum, which is therefore the mean.  Consequently, optimality
conditions for the function~$S^2(\xi)$ on~$\cT_n^2$ can be based on
its behavior in \emph{any one}\/ of the multivistal facets in which
$\xi$ lies.  In particular, we have the following important result.

\begin{cor}\label{c:unconstrained_mean}
The squared image $\ol \cX$ of the Fr\'echet mean $\ol X$ must satisfy
$\nabla S^2(\ol\cX) = 0$ on its orthant $\Or(\ol\cX)$.  If $\ol\cX$
lies interior to a maximal orthant~$\Or$, then $\ol\cX$ is the squared
image of the mean if and only if the gradient satisfies\/ $\nabla
S^2(\ol\cX) = 0$.  These statements are true regardless on which
multivistal facet of $\Or$ the variance function is derived.
\end{cor}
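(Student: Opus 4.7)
The plan is to leverage the two ingredients already in hand: the strict convexity of $S$ from Proposition~\ref{p:convex}, and the continuous differentiability of $S^2$ on each maximal orthant from Corollary~\ref{t:squarediff}. The first gives uniqueness of the Fr\'echet mean and that any local minimum of $S$ is automatically the global minimum, while the second lets us apply ordinary calculus inside each orthant.

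For the necessity statement, I would first observe that because the squaring map is a homeomorphism of $\cT_n$ that is strictly monotone in each coordinate, $\ol\cX$ is the unique global minimizer of $S^2$ on $\cT_n^2$. Restricting to the orthant $\Or(\ol\cX)$ whose relative interior contains $\ol\cX$, the function $S^2$ is continuously differentiable at $\ol\cX$ (this follows from Corollary~\ref{t:squarediff} applied within any maximal orthant containing $\Or(\ol\cX)$ in its closure and then restricted). Since $\ol\cX$ lies in the relative interior of this convex Euclidean region and is a global minimum of $S^2$, standard first-order optimality forces $\nabla S^2(\ol\cX) = 0$ as a relation on $\Or(\ol\cX)$.

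For the sufficiency statement, suppose $\ol\cX$ lies in the interior of a maximal orthant $\Or$ and satisfies $\nabla S^2(\ol\cX)=0$. The squaring map restricted to $\Or$'s interior is a diffeomorphism, so by the chain rule $\nabla S(\sqrt{\ol\cX})=0$. Because $S$ is strictly convex along every geodesic by Proposition~\ref{p:convex}, and in particular along every straight-line path in $\Or$'s interior, any critical point of $S$ in this interior is a strict local minimum. Strict convexity on the global NPC space $\cT_n$ forces every local minimum to be the unique global minimum, so $\sqrt{\ol\cX}=\ol X$ as required. The final sentence then follows because $\nabla S^2(\ol\cX)$ is an intrinsic quantity: although Eq.~\eqref{eq:squared variance formula} has different algebraic forms on different multivistal facets of $\Or$, the gradients computed from any two such forms must agree at $\ol\cX$ by the continuous differentiability of $S^2$ inside $\Or$.

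The main obstacle I anticipate is handling the case in the first claim where $\ol\cX$ lies on the boundary of multiple maximal orthants, so that $\Or(\ol\cX)$ itself is not a maximal orthant and Corollary~\ref{t:squarediff} does not apply directly to $\ol\cX$ as an interior point. The resolution is to note that for any maximal orthant $\Or \supseteq \Or(\ol\cX)$, the restriction of $S^2$ to $\Or(\ol\cX)$ agrees with the restriction of the $C^\infty$-on-interior function $S^2|_{\Or}$ to the face $\Or(\ol\cX)$, so differentiability along $\Or(\ol\cX)$ is inherited; the first-order condition is then the usual one at a relative-interior minimum of a differentiable function on a convex set, giving vanishing of the gradient along the directions spanning $\Or(\ol\cX)$.
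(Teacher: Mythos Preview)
Your proposal is correct and follows essentially the same approach as the paper: both arguments rest on Corollary~\ref{t:squarediff} (continuous differentiability of $S^2$, hence well-definedness of the gradient independent of multivistal facet) together with the uniqueness of the minimum coming from Proposition~\ref{p:convex}. Your treatment is in fact more careful in one respect: you explicitly transfer the condition $\nabla S^2(\ol\cX)=0$ back to $\nabla S(\sqrt{\ol\cX})=0$ via the chain rule before invoking strict convexity, which is the right thing to do since the paper itself notes that $S^2$ need not be convex; the paper's proof compresses this step into the phrase ``$S^2$ attains a unique minimum.'' Your discussion of the boundary case is also a reasonable elaboration of a point the paper leaves implicit.
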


\begin{proof}
Since by Corollary~\ref{t:squarediff} the gradient is independent of
which vistal facet it is calculated from, the gradient $\nabla S^2(\ol
\cX)$ must be zero on any of them in order $\ol\cX$ to be optimal.
Conversely, since $S^2$ attains a unique minimum on~$\cT^2_n$, it
follows that $X$ must be the mean whenever $\nabla S^2(\ol\cX) = 0$ on
an entire maximal orthant.
\end{proof}

\begin{rem}\label{nondifferentiability}
When a point $\cX$ lies on the boundary of a maximal orthant, the
gradient $\nabla S^2(\cX)$ may be zero on $\Or(\cX)$ even if $\cX$ is
not the squared mean, since there may be a maximal orthant $\Or\supset
\Or(\cX)$ having a point with smaller variance than $\cX$.  Finding
$\Or$ from $\cX$ can be quite difficult, since of~$\partial
S^2(\cX)/\xi_e$ may be undefined or infinite for $e\notin \Or(\cX)$.
Furthermore, directional derivatives may fail to be continuous along
orthant boundaries.  This issue presents serious optimization
difficulties in locating sample means, since there is ample evidence
that reasonably evenly distributed data in tree space yield means that
are likely to occur on orthant boundaries, or indeed, even to lie at
the origin; see Section~\ref{sub:stickiness}.  Thus optimality
conditions for the mean when it occurs on orthant boundaries is an
important topic of further~research.
\end{rem}

%%%%%%%%%%%%%%%%%%%%%%%%%%%%%%%%%%%%%%%%%%%%%%%%%%%%%%%%%%%%%%%%%%%%%%
\subsection{A descent method to compute the mean}\label{sub:descent}%%

In spite of Remark~\ref{nondifferentiability}, we can suggest a basic
method for finding the mean in tree space.  The general idea is to
start with some feasible tree, and construct a sequence of trees whose
variance function is decreasing, until arriving at the mean tree.

\begin{alg}[Descent method for computing the mean]\label{a:descent}
\end{alg}
\begin{alglist}
\routine{input}%
Trees $T^1,\ldots,T^r$ in $\cT_n$

\routine{output}%
The mean tree for $T^1,\ldots,T^r$

\routine{initialize}%
Choose some good starting point $\xi^0 \in \cT_n^2$, for example, by
running Sturm's algorithm for a predetermined number of iterations.

\routine{while} the mean has not been found:
\routine{do}
\begin{enumerate}\setlength\itemsep{-.5ex}
\item%
Find the set $\cM$ of all maximal orthants containing $\xi^t$.
\item%
For each $\cV\in\cM$, choose a point $u^0_\cV$ in the interior of
$\cV$.
\item%
Use a nonlinear interior point/penalty function method to find a local
minimum $u^*_\cV$ of $S^2$ in $\cV$.
\item%
If $u^*_\cV\neq\xi^t$ for any $\cV\in\cM$, then choose the $u^*_\cV$
with minimum $S^2(u^*_\cV)$, and set $\xi^{t+1}=u^*_\cV$.
\end{enumerate}
\routine{end} \textsc{while-do}
\routine{return} $\xi^t$\\
\end{alglist}

The local minimum search in Step 3 should be both straightforward and
reasonably fast, and the accuracy of the points $\xi^k$ as
representing the true local minimum of course depends upon the method
used to find it.  Since the function $S^2$ is continuously
differentiable on all ${\cal V}\in{\cal M}$, the search in fact finds
a local minimum on the orthant~$\cal V$.  Since all neighboring
orthants are searched from $\xi^t$, it follows that whenever all of
these local searches converge back to $\xi^t$ then $\xi^t$ must
necessarily be the mean.  Finally, the algorithm terminates after a
finite number of iterations, since no two $\xi^t$ in the sequence can
lie in the same orthant.  The number of iterations depends both on the
number of iterations $t$ and also the size of $\cal M$, each of which
may be exponentially large.  Thus it is important for the
implementation that a good starting point $\xi^0$ be found, and that a
good method be used to determine descent directions in the set of
maximal orthants adjacent to the point~$\xi^t$.  In general, better
local search techniques and starting solutions, perhaps through a
hybrid of Sturm's Algorithm and descent methods, could improve the
accuracy and reliability of procedures to calculate~the~mean.

%%%%%%%%%%%%%%%%%%%%%%%%%%%%%%%%%%%%%%%%%%%%%%%%%%%%%%%%%%%%%%%%%%%%%%
\section{Properties and applications of the mean}\label{s:apps}%%%%%%%
%%%%%%%%%%%%%%%%%%%%%%%%%%%%%%%%%%%%%%%%%%%%%%%%%%%%%%%%%%%%%%%%%%%%%%

This section contains a series of remarks, results, and computational
studies related to the Fr\'echet mean in tree space.

All synthetic examples in this section will be given using the trees in 
Figure~\ref{f:mean_examples}.  This figure depicts three adjacent
 orthants in $\cT_4$, "flattened out" into the plane, to 
make the visualization easier. The edges $e_1'$ and $e_2$ are not
 compatible, so the $(e_1',e_2)$-orthant (shaded in 
 Figure~\ref{f:mean_examples}) is not part of $\cT_4$.
 For tree $T^1$
(respectively, trees $T^2$ and $T^3$), we specify its interior edge
lengths by a pair of coordinates $(e_1,e_2)$ (respectively,
$(e_1,e_2')$ and $(e_1',e_2')$).  The geodesic between any pair of
these trees is a straight line, unless it would cross the shaded region, in which 
case the geodesic is the \emph{cone path}, consisting of
the two legs joining the given points to the origin.
Likewise, $\ol
T$ is the Euclidean barycenter unless it lies in the shaded region, in which case
$\ol T$ is the point on the boundary of the
shaded region that minimizes the sum of the squared geodesic distances
to the three trees.

%This section contains a series of remarks, results, and computational
%studies related to the Fr\'echet mean in tree space.  Several examples
%in this section use the trees given in Figure~\ref{f:mean_examples}.
%The three orthants in~$\cT_4$ there are adjacent, with the shaded
%$(e_1',e_2)$-orthant missing, since $e_1'$ and $e_2$ are not
%compatible.  The orthants have been drawn ``flattened out'' in the
%same plane to make the geodesics and means easier to see, although
%% they are actually in 4 dimensions in tree space.
%$\cT_4$ is not globally embeddable in the plane.  For a tree $T^1$
%(respectively, trees $T^2$ and $T^3$), we specify its interior edge
%lengths by a pair of coordinates $(e_1,e_2)$ (respectively,
%$(e_1,e_2')$ and $(e_1',e_2')$).  The geodesic between any pair of
%these trees is a straight line whenever it does not cross into the
%shaded region, and otherwise it is the \emph{cone path} consisting of
%the two legs joining the given points to the origin.  Likewise, $\ol
%T$ is the Euclidean barycenter whenever that point does not lie in the
%shaded region, and otherwise it is the point on the boundary of the
%shaded region that minimizes the sum of the squared geodesic distances
%to the three trees.

\begin{figure}[ht]
$$%
  \includegraphics[scale=0.45]{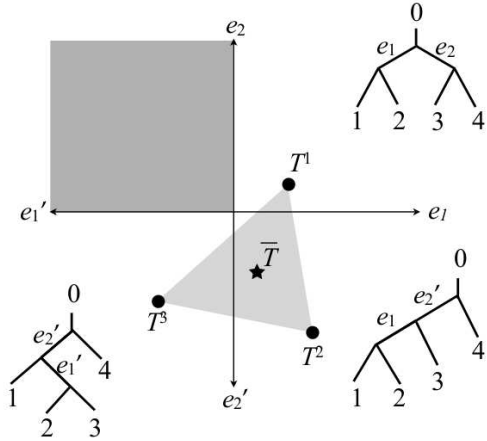}
$$
\caption{Example for the remarks.}\label{f:mean_examples}
\end{figure}

%%%%%%%%%%%%%%%%%%%%%%%%%%%%%%%%%%%%%%%%%%%%%%%%%%%%%%%%%%%%%%%%%%%%%%%
\subsection{Composition of the mean tree}\label{r:edges_in_mean}%%%%%%

The topology of the mean tree depends on both the topologies and the
edge lengths of the sample trees.  Consider, for example, trees
$T^1=(3,1)$ and $T^3=(1,3)$.  The mean between these two trees is the
midpoint $\ol T^2 = (1,1)$ of the segment joining them.  Changing both
edge lengths of~$T^1$ to~$5$, however, yields a midpoint $\ol T^1 =
(1,2)$; similarly, by symmetry, changing both edge lengths of~$T^3$
to~$5$ yield the midpoint $\ol T^3 = (1,2)$.  That said, in general we
can give some indication of what edges lie in the mean tree.

\begin{lem}
\label{l:edges_in_mean}
Every edge of the mean tree is an edge of some sample tree.
Furthermore, if an edge appears in all sample trees, then it must also
appear in the mean tree.
\end{lem}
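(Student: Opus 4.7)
The plan is to prove both claims by contradiction, combining the explicit gradient formula for $S^2$ from Section~\ref{sub:optimality} with Corollary~\ref{c:unconstrained_mean} for the first statement, and using a direct perturbation argument for the second. Suppose first that $e$ is an edge of $\ol T$ but is not an edge of any~$T^\ell$. Since $|e|_{\ol T} > 0$, the coordinate $\xi_e$ of the squared image $\ol\cX$ is positive, hence parametrizes the orthant $\Or(\ol\cX)$, so Corollary~\ref{c:unconstrained_mean} forces $\partial S^2(\ol\cX)/\partial \xi_e = 0$. On the other hand, because $e$ is never a common edge of $\ol T$ and $T^\ell$, every sign $\delta^\ell_{i(e,\ell)}$ in the gradient formula equals $+1$, so each of the $r$ summands is at least $1$ and the partial derivative is at least $r > 0$, a contradiction.

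For the second claim, suppose $e$ is an edge of every $T^\ell$ but is not an edge of $\ol T$. The first claim guarantees that every edge of $\ol T$ belongs to some $T^\ell$; since $e$ is compatible with every other edge of that $T^\ell$, it follows that $e$ is compatible with every edge of~$\ol T$. Hence for each $\epsilon > 0$ the tree $\ol T(\epsilon)$ obtained by adjoining $e$ to $\ol T$ with length $\epsilon$ is a well-defined point of $\cT_n$. For each $\ell$, the edge $e$ is now a common edge of $\ol T(\epsilon)$ and $T^\ell$ compatible with all other edges of both trees, and by the treatment of common edges in Theorem~\ref{t:prelimThm}, $e$ moves into its own front-loaded common-edge support pair $(\{e\},\{e\})$ without disturbing any of the other support pairs of the $\ol T$-to-$T^\ell$ geodesic (from which $e$ has simply been removed from the trailing pair $(\nothing,B_k)$ of $\ol T$-compatible edges of~$T^\ell$). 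The contribution of $e$ to the squared distance therefore changes from $|e|_{T^\ell}^2$ to $(|e|_{T^\ell} - \epsilon)^2$, giving
$$
  S(\ol T(\epsilon)) - S(\ol T)
  = \sum_{\ell=1}^r \bigl[(|e|_{T^\ell} - \epsilon)^2 - |e|_{T^\ell}^2\bigr]
  = -2\epsilon \sum_{\ell=1}^r |e|_{T^\ell} + r\epsilon^2,
$$
which is strictly negative for small $\epsilon > 0$, contradicting the minimality of~$\ol T$.

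The main obstacle will be verifying that the non-$e$ portion of the $\ol T(\epsilon)$-to-$T^\ell$ geodesic really coincides with the corresponding portion of the $\ol T$-to-$T^\ell$ geodesic. This reduces to checking that moving $e$ out of a pair $(\nothing, B_k)$ and into its own common-edge pair $(\{e\},\{e\})$ with ratio $-\epsilon/|e|_{T^\ell}$ preserves properties (P2) and (P3) on the remaining pairs, which holds because neither the ratio sequence nor the compatibility conditions on the other edges depend on~$e$. Once this is in hand, both contradictions are immediate.
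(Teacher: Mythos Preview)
Your proof is correct and, for the second claim, takes the same route as the paper: both insert~$e$ into~$\ol T$ and observe that the variance drops, the only difference being that the paper uses the length $\min_\ell |e|_{T^\ell}$ (which makes $\ol T$ strictly closer to every~$T^\ell$ in one shot) while you use a small~$\epsilon$ and compute the first-order change.

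For the first claim the paper is more direct: it simply remarks that contracting an edge~$e$ absent from every~$T^\ell$ shortens each geodesic, since $e$ lies in some~$A_i$ and contributes positively to~$\norm{A_i}$ in~\eqref{eq:pathlength2}. Your route through the gradient formula and Corollary~\ref{c:unconstrained_mean} is the infinitesimal version of exactly the same observation. It makes the mechanism explicit at the cost of invoking the machinery of Section~\ref{s:computing}; the only extra care needed is that $\partial S^2/\partial\xi_e$ be well defined at~$\ol\cX$ even when $\ol\cX$ sits on an orthant boundary, and this holds because $\xi_e>0$ forces $\bar\xi^\ell_{i(e,\ell)}>0$ for every~$\ell$.
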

\begin{proof}
If a tree contains an edge not in any sample tree, then contracting
this edge gives a tree with a smaller variance function.  Now suppose
that the edge~$e$ is contained in all sample trees, and thus is
compatible with all other edges in the sample trees.  Since the mean
contains only edges from the sample trees, edge~$e$ is also compatible
with all edges in the mean tree.
Thus if the mean tree does not contain $e$, we can add in $e$ with 
length equal to the minimum of its lengths in the sample trees, yielding a 
tree that is closer to all the sample trees, which is a contradiction.
\end{proof}

\subsection{Other notions of consensus tree}\label{r:mean_defs}%%%%%%%

Several authors have proposed notions of ``center'' for a set ${\bm T}
= \{T^1,\ldots,T^r\}$ of points in~$\cT_n$.  The Euclidean or
combinatorial properties of these centers make them useful for
representing consensus trees.  Here we compare three such centers
with the Fr\'echet mean~$\ol T$ of~$\bm T$.  All of these centers agree when $\bm T$
lies entirely in a single orthant of~$\cT_n$, but fail to agree for
more globally distributed samples from tree space.

\begin{ex}[\bf The majority-rule consensus (MRC) tree]
First introduced by Margush and McMorris \cite{MargushMcMorris81},
this is the tree whose edge set is comprised of those edges that
appear in at least half of the trees in $\bm T$.  It, or an variation, is widely
 used in the phylogenetics literature.  The topology of $\ol T$ is not a 
 refinement of the MRC tree, unlike many other consensus methods \cite{Bryant03}.
 For example, consider the trees in Figure~\ref{f:mean_examples}
 with coordinates $T^1 = (1,1)$, $T^2 = (1,1)$, and $T^3 = (5,6)$.  
 The mean of these trees is the Euclidean barycenter $\ol T^3 = (1,2)$, while the MRC tree
 has the topology of tree $T^2$, so neither tree is a refinement of the other.
 \end{ex}

\begin{ex}[\bf Sturm's inductive mean]\label{e:indMean}
The inductive mean (Definition~\ref{d:indMean}) of the set~$\bm T$,
for some ordering of~$\bm T$, does not coincide with~$\ol T$, and it
can differ depending upon the ordering.  Consider the trees in
Figure~\ref{f:mean_examples} with coordinates $T^1 = (3,10)$, $T^2 =
(3,3)$, and $T^3 = (10,3)$.  Either order having $T^1$ and $T^3$ first
yields the inductive mean $\tilde T^2 = (1,1)$.  Either order having
$T^1$ and $T^2$ first yields the inductive mean $\tilde T^3 =
(0.390,0.117)$, and either order having $T^2$ and $T^3$ first yields
the inductive mean $\tilde T^1 = (0.117,0.390)$.  These have different
topologies, and none of them equals~$\ol T$, which has all edges~$0$.
\end{ex}

\begin{ex}[\bf The BHV centroid]\label{e:BHVcentroid}
Billera, Holmes, and Vogtmann \cite{BHV01} define the \emph{centroid}
of $\bm T = \{T^1,\ldots,T^r\}$ inductively on $r$.  For $r = 2$, the
centroid is the midpoint of the two trees.  For $r > 2$, the centroid
is obtained as follows: set $\bm T^1 = \bm T$ and inductively find the
centroid of each subset of $r-1$ trees in $\bm T^1$ to obtain a new
set $\bm T^2$ of $r$ trees. Repeat this process on the new set,
creating a sequence $\bm T^1,\bm T^2,\ldots$ of $r$-sets of trees.
The BHV centroid of~$\bm T$ is the limit $\hat T$ of any sequence of
points chosen from each of the sets~$\bm T^i$.  This process converges
in a general global NPC space \cite[Theorem~4.1]{BHV01}.

Billera, Holmes, and Vogtmann note that in Euclidean space, the
centroid and Fr\'echet mean coincide.  This is not generally the case in tree
space.  Consider, for example, the trees in Figure~\ref{f:mean_examples} with
coordinates $T^1 = (2,4)$, $T^2 = (2,2)$, and $T^3 = (4,2)$.  Then
$\ol T$ is again the origin, while it is easy to see that the BHV centroid
lies off the origin.
\end{ex}

\subsection{Stickiness of the mean}\label{sub:stickiness}%%%%%%%%%%%%%

Sullivant \cite{Sullivant10} noticed the tendency of the Fr\'echet
mean to be \emph{sticky}, which in this context means that perturbing
one or more of the trees in the set $\bm T$ does not necessarily
change any of the coordinates of~$\ol T$.  Take, for
example, the points $T^1 = (3,10)$, $T^2 = (3,3)$, and $T^3 = (10,3)$.
The mean $\ol T$ lies at the origin, and remains there even if the
coordinates of any of the three trees~$T^i$ are perturbed even up to a
full unit.  Sticky means occur exclusively on orthants of lower
dimension, underscoring the importance of closely investigating
properties of mean trees that lie on orthant boundaries.

The notion of stickiness has been quantified via a Central Limit
Theorem for means of probability distributions on certain NPC spaces
\cite{basrak,stickyCLT}.

%%%%%%%%%%%%%%%%%%%%%%%%%%%%%%%%%%%%%%%%%%%%%%%%%%%%%%%%%%%%%%%%%%%%%%
\subsection{Application to biological data}\label{sub:data}

Statistical applications of this research are important in several
areas of mathematics, biology, and medicine.  Here, we consider a
well-studied data set in phylogenetics
with respect to the Fr\'echet mean.  For applications of the Fr\'echet 
mean to medical imaging, see \cite{SkwererJMIV13} and \cite{IPMI2013}.

\begin{ex}[\bf Gene trees vs.\ species trees]\label{e:gene}
% Applications to yeast genes
% Mean trees for biological data sets can make sense biologically.
A \emph{gene tree} is a phylogenetic tree representing the
evolutionary history of a particular gene found in some set of species.  In contrast,
a \emph{species tree} is a phylogenetic tree representing the
evolutionary history of the species themselves: the history of population
bifurcations leading to divergence.  Due to natural processes such as
incomplete lineage sorting, gene trees for different genes can have
different topologies, even when sampled from the same set of
individuals---let alone the same set of species---and hence a gene
tree need not share its topology with the species tree (see
\cite{Maddison97}, for example).  Furthermore, the most likely gene
tree topology need not agree with the species tree topology
\cite{DegnanRosenberg06}.  However, species trees are usually
reconstructed from gene trees, and a major open question is how best
to accomplish~this.

We examined the yeast data set of Rokas et al.~\cite{Rokas03}.  For
eight species of yeast, they identified 106 genes and reconstructed
the corresponding gene tree with edge lengths for each using a maximum
likelihood approach.  In these 106 gene trees, there were 21 different
topologies.  We used Sturm's algorithm to compute the Fr\'echet mean of these
gene trees.  This mean tree had the same topology as the agreed-upon
species tree \cite{EdwardsLiuPearl07}.  In general, the mean gene tree does
not necessarily identify the species tree, as a consequence of stickiness,
when branch lengths are taken into account \cite{Sullivant10}; that is,
 two finite samples of gene trees can
yield the same mean tree but have different species trees.  However,
we conjecture that the topology of the species tree is a refinement of
the topology of the Fr\'echet mean of the gene trees.  That is,
stickiness of the Fr\'echet mean forces some edges to have zero length
but should not add any extraneous edges to this~mean.
\end{ex}

%%%%%%%%%%%%%%%%%%%%%%%%%%%%%%%%%%%%%%%%%%%%%%%%%%%%%%%%%%%%%%%%%%%%%%
\section{Globally nonpositively curved spaces}\label{s:npc}%%%%%%%%%%%
%%%%%%%%%%%%%%%%%%%%%%%%%%%%%%%%%%%%%%%%%%%%%%%%%%%%%%%%%%%%%%%%%%%%%%

Virtually all of our treatment of tree spaces extends to more general
global NPC spaces.  This section reframes the concepts and notation of
the paper in the context of global NPC spaces, particularly orthant
spaces, and shows how the results of the paper generalize to these
spaces.

%%%%%%%%%%%%%%%%%%%%%%%%%%%%%%%%%%%%%%%%%%%%%%%%%%%%%%%%%%%%%%%%%%%%%%
\subsection{The geometry of nonpositively curved spaces}\label{ss:geomNPC}

Fix a metric space~$\cT = (\cT,d)$.  A~\emph{path} in~$\cT$ is the
image of a continuous map $\gamma: [0,1] \to \cT$.  Write
$\gamma_\lambda = \gamma(\lambda)$ for $0\leq \lambda \leq 1$.  The
\emph{length} of $\gamma$ is the supremum of all sums
$$%
  d(\gamma_{x_0},\gamma_{x_1}) +
  d(\gamma_{x_1},\gamma_{x_2}) + \cdots +
  d(\gamma_{x_{k-1}},\gamma_{x_k})
$$
such that $0 \leq x_0 \leq \cdots \leq x_k \leq 1$.  A path is a
\emph{(global) geodesic} if the distance $d(\gamma_x,\gamma_y)$
between any pair of points on $\gamma$ equals the length of that
portion of $\gamma$ between them.  A~\emph{geodesic space} is a
complete metric space such that every pair $\{x,y\}$ of points is
joined by a path $\gamma$ whose length is the distance~$d(x,y)$
between $x$ and~$y$.

\begin{defn}\label{d:npc}
A metric space $(\cT,d)$ is \emph{globally nonpositively curved}, also
known as \emph{global NPC} or \emph{CAT(0)}, if for every triple of
points $a,b,c \in \cT$, any point $x$ on a geodesic joining
$a$~to~$b$, and any \emph{reference triangle}\/ $a'b'c'$ in Euclidean
space with edge lengths $d(a,b)$, $d(b,c)$, and $d(a,c)$, the unique
point $x'$ on $a'b'$ at distance $d(a,x)$ from~$a'$ satisfies $d(x,c)
\leq ||x'-c'||$.
\end{defn}

The definition essentially says that triangles created by joining
points by geodesics in a global NPC space are ``skinnier'' than their
counterparts in Euclidean space.

\begin{lem}[{\cite[Proposition~2.3]{sturm}}]\label{l:geodesics}
In a global NPC space every pair of points is joined by a unique
geodesic.\qed
\end{lem}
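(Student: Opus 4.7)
The plan is to establish uniqueness via the midpoint method; existence is already built into the setup, since a global NPC space is a geodesic metric space (as per the definitions preceding the lemma), so the substantive content lies entirely in uniqueness. The main tool will be the quadratic \emph{midpoint inequality}: for any three points $a,b,c \in \cT$ and any midpoint $m$ of a geodesic joining $b$ to~$c$,
$$
  d(a,m)^2 \leq \tfrac{1}{2}d(a,b)^2 + \tfrac{1}{2}d(a,c)^2 - \tfrac{1}{4}d(b,c)^2.
$$
First I would derive this inequality from Definition~\ref{d:npc} applied to the triple $(b,c,a)$: the reference triangle $b'c'a'$ in Euclidean space has side lengths $d(b,c),d(c,a),d(a,b)$, its midpoint $m'$ of $b'c'$ satisfies the Euclidean parallelogram identity (Apollonius' theorem) $\|a'-m'\|^2 = \tfrac{1}{2}d(a,b)^2 + \tfrac{1}{2}d(a,c)^2 - \tfrac{1}{4}d(b,c)^2$, and the CAT(0) comparison $d(a,m) \leq \|a'-m'\|$ closes the loop.

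Next I would prove that midpoints of any pair $\{a,b\}$ are unique. Assume $m_1,m_2$ are both midpoints, so $d(a,m_i) = d(m_i,b) = \tfrac{1}{2}d(a,b)$. Pick any midpoint $m$ of $m_1$ and $m_2$; this exists because the space is geodesic. Applying the midpoint inequality to $(a,m_1,m_2)$ with midpoint $m$ and then to $(b,m_1,m_2)$ with the same $m$ gives
\begin{align*}
d(a,m)^2 &\leq \tfrac{1}{4}d(a,b)^2 - \tfrac{1}{4}d(m_1,m_2)^2,\\
d(b,m)^2 &\leq \tfrac{1}{4}d(a,b)^2 - \tfrac{1}{4}d(m_1,m_2)^2.
\end{align*}
Adding the square roots and invoking the triangle inequality $d(a,b) \leq d(a,m) + d(m,b)$ yields $d(a,b)^2 \leq d(a,b)^2 - d(m_1,m_2)^2$, forcing $d(m_1,m_2) = 0$, i.e.\ $m_1 = m_2$.

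Finally, suppose $\gamma_1,\gamma_2$ are two geodesics joining $a$ to $b$. Each $\gamma_i(1/2)$ is a midpoint of $\{a,b\}$, so by uniqueness $\gamma_1(1/2) = \gamma_2(1/2)$. Restricting each $\gamma_i$ to $[0,1/2]$ and $[1/2,1]$ (and reparameterizing) gives pairs of geodesics with common endpoints, to which the same midpoint argument applies, yielding $\gamma_1(k/2^n) = \gamma_2(k/2^n)$ for all dyadic rationals $k/2^n \in [0,1]$ by induction on $n$. Since dyadic rationals are dense in $[0,1]$ and each $\gamma_i$ is continuous, we conclude $\gamma_1 = \gamma_2$ on all of $[0,1]$.

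The only delicate step is the derivation of the midpoint inequality, which requires one to be careful that the CAT(0) comparison applies to an arbitrary midpoint on \emph{some} geodesic joining $b$ to~$c$ — but this is exactly the form of the quantifier in Definition~\ref{d:npc}, so no circularity arises. Everything else is bookkeeping.
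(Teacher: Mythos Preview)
Your argument is correct and is the standard midpoint-inequality proof of geodesic uniqueness in CAT(0) spaces. The paper itself does not supply a proof at all: the lemma is stated with a bare citation to \cite[Proposition~2.3]{sturm} and a \qed, so there is no ``paper's own proof'' to compare against. What you have written is essentially the argument Sturm gives (and the argument in Bridson--Haefliger), so in that sense you are reproducing the cited reference rather than deviating from it.

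One minor remark on presentation: when you write ``existence is already built into the setup, since a global NPC space is a geodesic metric space (as per the definitions preceding the lemma)'', note that Definition~\ref{d:npc} as stated does not literally assert that the space is geodesic; it only quantifies over points on geodesics that happen to exist. The paper is implicitly importing Sturm's convention that a global NPC space is by definition a complete geodesic space, and your reading is the intended one, but strictly speaking the existence half is a convention rather than a consequence of the displayed definition.
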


A real-valued function $f: \cT \to \RR$ is \emph{convex} if $f \circ
\gamma$ is convex for all geodesics~$\gamma$; that is, if
\begin{equation}\label{e:fconvex}
  f(\gamma_\lambda) \leq (1-\lambda)f(\gamma_0) + \lambda f(\gamma_1)
\end{equation}
for all geodesics $\gamma: [0,1] \to \cT$.

\begin{ex}\label{e:dconvex}
For any point $t \in \cT$, the distance $d_t(x) = d(x,t)$ from a point
$x\in \cT$ to $t$ is a convex function of $x$ \cite[Corollary~2.5 and
subsequent Remark~(i)]{sturm}.
\end{ex}

A real-valued function $f: \cT \to \RR$ is \emph{strictly convex} if
Eq.~\eqref{e:fconvex} holds strictly for $0 < \lambda < 1$.

\begin{lem}[{\cite[Proposition~1.7 and Remark~1.8]{sturm}}]\label{l:min}
Any strictly convex continuous function on a global NPC space attains
a unique minimum.\qed
\end{lem}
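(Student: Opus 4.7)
I would split the lemma into uniqueness, which is essentially one line from strict convexity, and existence, which is the real content and requires the CAT(0) comparison together with completeness. The statement is tacit about coercivity; without it even $e^x$ on $\RR$ (a global NPC space) is strictly convex and continuous yet attains no minimum, so the lemma should be read under the implicit assumption that the infimum is witnessed by some bounded, nonempty sublevel set, which is the case in all the applications below (for example, the variance function $S$ on $\cT_n$, where $S(X) \to \infty$ as $d(X, T^1) \to \infty$ by the triangle inequality).

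\textbf{Uniqueness.} Suppose $f(x_0) = f(x_1) = m := \inf_\cT f$ with $x_0 \neq x_1$. Lemma~\ref{l:geodesics} supplies a unique geodesic $\gamma$ from $x_0$ to $x_1$. Strict convexity applied at $\lambda = 1/2$ gives
$$f(\gamma_{1/2}) < \tfrac{1}{2}f(x_0) + \tfrac{1}{2}f(x_1) = m,$$
contradicting the definition of $m$.

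\textbf{Existence.} Choose a minimizing sequence $(x_n)$ with $f(x_n) \to m$. The goal is to prove $(x_n)$ is Cauchy; then completeness (built into the definition of a geodesic space) gives a limit $x^\ast$, and continuity of $f$ yields $f(x^\ast) = m$. The key tool is the Bruhat--Tits (CN) semiparallelogram inequality, which Definition~\ref{d:npc} delivers in any CAT(0) space: for $p, q \in \cT$ with $z$ the midpoint of the geodesic from $p$ to~$q$, and any reference $r$,
$$d(z,r)^2 \le \tfrac{1}{2}d(p,r)^2 + \tfrac{1}{2}d(q,r)^2 - \tfrac{1}{4}d(p,q)^2.$$
Apply this with $p = x_m$, $q = x_n$, $z = z_{mn}$ the geodesic midpoint, and $r$ a fixed reference in a sublevel set of~$f$. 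Combined with the convexity bound $f(z_{mn}) \le \tfrac{1}{2}(f(x_m)+f(x_n))$, which forces $f(z_{mn}) \to m$, one extracts an inequality that bounds $d(x_m,x_n)^2$ in terms of $f(x_m)+f(x_n)-2m$, which vanishes as $m,n \to \infty$. Hence $(x_n)$ is Cauchy.

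\textbf{Main obstacle.} The principal difficulty is existence. Global NPC spaces need not be locally compact, so one cannot pass to a convergent subsequence of a minimizing sequence by classical compactness, and strict convexity alone does not supply a uniform modulus of convexity. Consequently the proof must exploit the CAT(0) comparison itself, in the quantitative form of the CN inequality, to convert objective-value convergence into Cauchy convergence. The secondary subtlety is the implicit coercivity hypothesis; in the NPC settings of interest this is automatic from the metric geometry, but it must be flagged for the conclusion to be true in full generality.
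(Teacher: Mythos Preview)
The paper does not supply a proof here; the \qed\ follows a bare citation to Sturm, so there is no argument in the paper to compare yours against. Your uniqueness paragraph is correct, and your observation that the lemma as stated requires an implicit coercivity hypothesis (the $e^x$ example) is apt.

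The existence sketch, however, has a real gap. The CN inequality you invoke bounds $d(x_m,x_n)^2$ in terms of the \emph{distances} $d(x_m,r)$, $d(x_n,r)$, $d(z_{mn},r)$; it says nothing about $f$-values. You then assert that, combined with $f(z_{mn}) \to m$, ``one extracts an inequality that bounds $d(x_m,x_n)^2$ in terms of $f(x_m)+f(x_n)-2m$''---but this is precisely the step that needs justification, and for a general strictly convex $f$ no such inequality follows, because strict convexity carries no uniform modulus linking $f$-differences to distances. What makes the argument work for the variance function $S$ (and in Sturm's actual development) is the stronger \emph{uniform} convexity inequality
\[
S(\gamma_{1/2}) \le \tfrac12 S(\gamma_0)+\tfrac12 S(\gamma_1)-\tfrac14\, d(\gamma_0,\gamma_1)^2,
\]
itself a consequence of the CAT(0) comparison applied termwise to $d(\cdot,T^\ell)^2$; from this one gets $d(x_m,x_n)^2 \le 2S(x_m)+2S(x_n)-4m \to 0$ directly, with no reference point $r$ needed. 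So your outline is right for the applications in the paper but not for the lemma as literally stated: besides coercivity, one needs uniform convexity (or an independent route, such as the intersection property for nested closed bounded convex sets in complete CAT(0) spaces).
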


\begin{cor}\label{c:strictly}
If $\bm T = \{t^1,\ldots,t^r\}$ is a set of points in $\cT$, and $f:
\RR^r_+ \mapsto \RR$ is any (strictly) convex function, then the
function $F:\cT\mapsto \RR$ defined by
$$%
  F(x) = f(d_{t^1}(x),\ldots,d_{t^r}(x))
$$
is a (strictly) convex function.
\end{cor}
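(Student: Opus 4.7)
The plan is to reduce the assertion to the standard composition rule for convex functions, using the convexity of each coordinate function $x \mapsto d_{t^i}(x)$ supplied by Example~\ref{e:dconvex}. First I would fix an arbitrary geodesic $\gamma:[0,1]\to \cT$ and $\lambda \in (0,1)$, and verify the defining inequality for~$F$ directly at $\gamma_\lambda$.

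By Example~\ref{e:dconvex}, each $d_{t^i}$ is convex along $\gamma$, so for every $i$,
\begin{equation*}
  d_{t^i}(\gamma_\lambda)
  \;\leq\;
  (1-\lambda)\, d_{t^i}(\gamma_0) + \lambda\, d_{t^i}(\gamma_1).
\end{equation*}
Under the natural hypothesis that $f$ is non-decreasing in each coordinate on $\RR^r_+$ (which applies to the paper's motivating choice $f(y) = \sum_i y_i^2$, and which I would flag explicitly as needed for the composition to preserve convexity), applying $f$ to the coordinatewise distance vectors preserves these inequalities. Then convexity of~$f$ gives
\begin{equation*}
  f\bigl((1-\lambda)d_{t^1}(\gamma_0)+\lambda d_{t^1}(\gamma_1),\ldots,(1-\lambda)d_{t^r}(\gamma_0)+\lambda d_{t^r}(\gamma_1)\bigr)
  \;\leq\;
  (1-\lambda)\, F(\gamma_0) + \lambda\, F(\gamma_1).
\end{equation*}
Chaining the two inequalities yields $F(\gamma_\lambda) \leq (1-\lambda) F(\gamma_0) + \lambda F(\gamma_1)$, which is the definition of convexity.

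For the strict version, the main step is the second inequality, which is strict whenever the vector $\bigl(d_{t^1}(\gamma_0),\ldots,d_{t^r}(\gamma_0)\bigr)$ differs from $\bigl(d_{t^1}(\gamma_1),\ldots,d_{t^r}(\gamma_1)\bigr)$. The main obstacle I foresee is exactly this point: strict convexity of~$f$ alone does not force strict convexity of~$F$, because two distinct points of~$\cT$ could in principle share identical distance vectors to the~$t^i$, which would collapse the argument. I would address this by noting that if even one $d_{t^i}$ is itself \emph{strictly} convex along $\gamma$, then the first inequality is already strict coordinatewise and strict monotonicity of~$f$ in that coordinate (or, alternatively, the strictness of convexity of~$f$ together with the perturbation inherited from the strictly convex coordinate) propagates to a strict bound on~$F$. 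In the NPC setting of interest, the strictness of distance-squared functions on geodesics (Sturm's parallelogram-type identity, already invoked in Proposition~\ref{p:convex}) makes this automatic for $f(y)=\sum_i y_i^2$, which is all that is needed for the paper's applications.
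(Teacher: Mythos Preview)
The paper offers no proof of this corollary; it is simply asserted as following from Example~\ref{e:dconvex}. Your argument is the natural one and is essentially what the authors must have had in mind. More importantly, you have correctly identified a genuine gap in the \emph{statement} itself: without the additional hypothesis that $f$ is nondecreasing in each coordinate on $\RR^r_+$, the composition need not be convex. For instance, $f(y_1)=-y_1$ is affine, hence convex, yet $F(x)=-d_{t^1}(x)$ is concave on any orthant of tree space. The paper's only application is to $f(y)=\sum_i y_i^2$, which is nondecreasing on $\RR^r_+$, so the slip is harmless in context, but you are right to flag the missing monotonicity hypothesis.

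Your treatment of the strict case is also on target. Strict convexity of~$f$ alone cannot force strict convexity of~$F$, since distinct points of~$\cT$ may share the same distance vector to the~$t^i$; and the functions $d_{t^i}$ themselves are merely convex, not strictly so (they are affine along geodesics through~$t^i$). Falling back on the strict convexity of the \emph{squared} distance in NPC spaces, as you do, is exactly the right move for the variance application, and is what the paper actually invokes in Proposition~\ref{p:convex}.
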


In particular, the variance function for a set $\bm T$ of points is a
convex function and hence attains a unique minimum at the Fr\'echet
mean.

%%%%%%%%%%%%%%%%%%%%%%%%%%%%%%%%%%%%%%%%%%%%%%%%%%%%%%%%%%%%%%%%%%%%%%
\subsection{Means and variances in global NPC spaces}\label{ss:NPCmean}

This subsection generalizes the notion of mean and variance to general
probability measures in global NPC spaces.  The results follow from
those of Sturm \cite{sturm} in this area.  Let $\cP(\cT)$ be the set
of probability measures on a global NPC space~$\cT$.  If $\rho \in
\cP(\cT)$ is such a measure, then its \emph{variance} is
$$%
  \var(\rho) = \inf_{x\in\cT} \int_\cT d^2(x,y)\rho(dy).
$$
The variance can be infinite in general, but not in the case of most
interest to us, when $\rho$ has finite support, meaning that there is
a set $\bm T = \{t^1,\ldots,t^r\}$ of points in $\cT$, along with
nonnegative weights $\omega_1,\ldots,\omega_r$ satisfying $\omega_1 +
\cdots + \omega_r = 1$, such that the point~$t_i$ has mass $\rho(t_i)
= w_i$ for $i = 1,\ldots,r$.
% and $\rho(x) = 0$ otherwise.
% \comment{EM: specifying $\rho(x) = 0$ otherwise is redundant, since
% $\rho$ is a probability measure and weights sum to~$1$.
Let $\cP^2(\cT)$ be the set of measures in $\cP(\cT)$ having finite
variance.

\begin{prop}[{\cite[Proposition~4.3]{sturm}}]\label{p:var}
For a global NPC space $\cT$ and probability measure $\rho \in
\cP^2(\cT)$, there is a unique point $\ol \rho \in \cT$ such that
$\var(\rho) = \int_\cT d^2(\ol \rho,y)p(dy)$.
\end{prop}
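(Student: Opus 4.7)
The plan is to define $F : \cT \to \RR$ by $F(x) = \int_\cT d^2(x,y)\,\rho(dy)$, show that $F$ is well-defined, continuous, and strictly convex, and then show that any minimizing sequence for $F$ is Cauchy, so completeness of $\cT$ delivers a minimizer. The resulting unique minimizer is precisely the $\ol\rho$ of the proposition, since $\var(\rho) = \inf_x F(x)$ by definition.

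First I would check basic regularity. Finiteness of $F(x)$ at every $x \in \cT$ follows from the elementary bound $d^2(x,y) \leq 2d^2(x,x_0) + 2d^2(x_0,y)$ applied to a reference point $x_0$ with $F(x_0) < \infty$, which exists by the hypothesis $\rho \in \cP^2(\cT)$. Continuity of $F$ follows from the local Lipschitz estimate $|d^2(x,y) - d^2(x',y)| \leq d(x,x')\bigl(d(x,y)+d(x',y)\bigr)$: integrating gives $|F(x)-F(x')| \leq d(x,x')\int_\cT (d(x,y)+d(x',y))\,\rho(dy)$, and the integral is finite by Cauchy--Schwarz against the probability measure $\rho$.

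The central tool is the \emph{squared CAT(0) inequality}: for any geodesic $\gamma:[0,1]\to\cT$ and any $y\in\cT$,
$d^2(\gamma_\lambda,y) \leq (1-\lambda)\,d^2(\gamma_0,y) + \lambda\,d^2(\gamma_1,y) - \lambda(1-\lambda)\,d^2(\gamma_0,\gamma_1)$.
This follows from Definition~\ref{d:npc} by first verifying the displayed inequality as an identity on the Euclidean reference triangle with vertices $\gamma_0,\gamma_1,y$ (standard parallelogram-law computation) and then invoking the NPC comparison $d(x,c) \leq \|x'-c'\|$. Integrating against $\rho$ yields $F(\gamma_\lambda) \leq (1-\lambda)F(\gamma_0) + \lambda F(\gamma_1) - \lambda(1-\lambda)\,d^2(\gamma_0,\gamma_1)$, which is strictly less than $(1-\lambda)F(\gamma_0)+\lambda F(\gamma_1)$ whenever $\gamma_0\neq\gamma_1$ and $0<\lambda<1$. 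This proves strict convexity of $F$.

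Existence and uniqueness of the minimizer both fall out of the midpoint case $\lambda=1/2$. Let $V=\inf_x F(x)=\var(\rho)$, take a minimizing sequence $(x_n)$ with $F(x_n)\to V$, and let $z_{nm}$ be the midpoint of the geodesic from $x_n$ to $x_m$. Since $F(z_{nm})\geq V$, the midpoint inequality rearranges to $d^2(x_n,x_m) \leq 2F(x_n) + 2F(x_m) - 4V \to 0$, so $(x_n)$ is Cauchy; its limit $\ol\rho$ satisfies $F(\ol\rho)=V$ by continuity of $F$. Uniqueness follows from the same midpoint inequality applied to any two purported minimizers, since $d(\ol\rho_1,\ol\rho_2)>0$ would force $F$ at the midpoint strictly below $V$. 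The main technical hurdle is establishing the squared CAT(0) inequality from the bare triangle-comparison definition; once it is in hand, strict convexity and the Cauchy argument both follow routinely, so no appeal to coercivity or to Lemma~\ref{l:min} in its abstract form is strictly needed.
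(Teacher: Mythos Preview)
Your argument is correct. The paper itself does not supply a proof of this proposition: it is stated purely as a citation to Sturm's Proposition~4.3, with no further argument. What you have written is essentially Sturm's own proof, namely the $2$-uniform convexity of the squared distance function (your ``squared CAT(0) inequality''), integrated to give a quantitative strict convexity of $F$, followed by the standard Cauchy argument on minimizing sequences. So there is nothing to compare against in this paper; your self-contained proof simply fills in what the authors defer to the reference.
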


The point $\ol \rho$ is referred to as the \emph{Fr\'echet mean}\/ or
\emph{barycenter}\/ in this context as well, and when $\rho$ has
finite support with $\omega_i = \frac 1r$ for all~$r$, it is a direct
generalization of the definition of mean given in
Section~\ref{s:mean}.  The notion of inductive mean given by
Definition~\ref{d:indMean} extends easily to an arbitrary global NPC
space~$\cT$, and the following result generalizes
Theorem~\ref{t:sturmPoints}.

\begin{thm}[{\cite[Theorem~4.7]{sturm}}]\label{t:sturm}
For a global NPC space $\cT$ and probability measure $\rho \in
\cP^2(\cT)$, let $X^1,X^2,\ldots$ be a sequence of independent and
identically distributed random variables drawn from $\rho$.  Then with
probability~$1$, the sequence of inductive mean values
$\mu_1,\mu_2,\ldots$ approaches the mean $\ol \rho$ of $\rho$.
%; that
%is,
%$$%
%  \frac1k\sum_{\ell=1,\ldots,k}^{\longrightarrow}X^\ell \to \ol \rho.
%$$
\end{thm}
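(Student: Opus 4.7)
The plan is to combine the strengthened CAT(0) interpolation inequality with a supermartingale convergence argument. By applying Definition~\ref{d:npc} to the reference triangle with vertices $\mu_{k-1}$, $X^k$, $\ol\rho$, and comparing the point $\mu_k$---which lies at geodesic parameter $1/k$ along the edge from $\mu_{k-1}$ to $X^k$---to its Euclidean counterpart, one obtains the pointwise bound
$$
  d^2(\mu_k, \ol\rho)
  \leq \bigl(1 - \tfrac{1}{k}\bigr) d^2(\mu_{k-1}, \ol\rho)
     + \tfrac{1}{k} d^2(X^k, \ol\rho)
     - \tfrac{1}{k}\bigl(1 - \tfrac{1}{k}\bigr) d^2(\mu_{k-1}, X^k).
$$
This semi-parallelogram form comes from the cosine-law identity for the interior point in the reference plane, together with the NPC comparison that replaces the Euclidean squared distance by $d^2(\mu_k, \ol\rho)$. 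Deriving this from the triangle comparison of Definition~\ref{d:npc} would be the first step.

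Next I would take conditional expectations given $\cF_{k-1} = \sigma(X^1,\ldots,X^{k-1})$. Since $X^k$ is independent of $\cF_{k-1}$ with distribution $\rho$, we get $\mathbb{E}[d^2(X^k,\ol\rho) \mid \cF_{k-1}] = \var(\rho)$. The key auxiliary ingredient is the \emph{variance inequality}
$$
  \int_\cT d^2(x,y)\,\rho(dy) \;\geq\; \var(\rho) + d^2(x,\ol\rho)
  \quad\text{for every } x \in \cT,
$$
which I would derive from the same CAT(0) interpolation bound applied along geodesics emanating from~$\ol\rho$, together with the minimizing characterization of~$\ol\rho$ from Proposition~\ref{p:var}. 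Applied to $x = \mu_{k-1}$ this yields $\mathbb{E}[d^2(\mu_{k-1}, X^k) \mid \cF_{k-1}] \geq \var(\rho) + d^2(\mu_{k-1},\ol\rho)$. Substituting and simplifying produces the clean recursion
$$
  \mathbb{E}\bigl[d^2(\mu_k,\ol\rho) \;\big|\; \cF_{k-1}\bigr]
  \;\leq\; \bigl(1 - \tfrac{1}{k}\bigr)^{\!2} d^2(\mu_{k-1}, \ol\rho)
     + \tfrac{\var(\rho)}{k^2}.
$$
Writing $V_k = \mathbb{E}[d^2(\mu_k,\ol\rho)]$ and taking unconditional expectations, a one-line induction starting from $V_1 = \var(\rho)$ shows $V_k \leq \var(\rho)/k$, so $\mu_k \to \ol\rho$ in mean square.

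To upgrade $L^2$ convergence to almost-sure convergence I would invoke Doob's supermartingale convergence theorem. Setting $Z_k = d^2(\mu_k,\ol\rho)$, the conditional recursion implies in particular that $\mathbb{E}[Z_k \mid \cF_{k-1}] \leq Z_{k-1} + \var(\rho)/k^2$, so the compensated sequence $M_k := Z_k + \var(\rho)\sum_{j>k} j^{-2}$ is a nonnegative supermartingale and therefore converges almost surely. Because $\sum_{j>k} j^{-2} \to 0$, it follows that $Z_k$ itself converges almost surely to some $Z_\infty \geq 0$; combining $V_k \to 0$ with Fatou's lemma forces $Z_\infty = 0$ a.s., which is the desired convergence $d(\mu_k,\ol\rho) \to 0$ almost surely.

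The main obstacle is establishing the variance inequality cleanly for arbitrary measures in~$\cP^2(\cT)$. For measures with finite support it follows immediately from the pointwise NPC bound by linearity, but in general one must argue via measurability of the geodesic midpoint map (which follows from uniqueness of geodesics, Lemma~\ref{l:geodesics}) and a dominated-convergence step to integrate the CAT(0) bound against~$\rho$. A secondary subtlety is that Definition~\ref{d:npc} states the CAT(0) condition only as a triangle comparison, so the semi-parallelogram interpolation form that drives the entire argument must be derived from it rather than assumed; this is routine but belongs at the very beginning of the proof.
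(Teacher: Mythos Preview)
The paper does not supply its own proof of this theorem: it is stated as a direct citation of \cite[Theorem~4.7]{sturm}, with no proof environment following. (The same is true of the finite-support special case, Theorem~\ref{t:sturmPoints}.) So there is nothing in the paper to compare against.

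That said, your outline is essentially Sturm's own argument. The three pillars you identify---the CAT(0) interpolation (semi-parallelogram) inequality for the point $\mu_k$ at parameter $1/k$ on the geodesic from $\mu_{k-1}$ to $X^k$; the variance inequality $\int d^2(x,y)\,\rho(dy) \geq \var(\rho) + d^2(x,\ol\rho)$; and the supermartingale/Robbins--Siegmund step to promote $L^2$ convergence to almost-sure convergence---are exactly what Sturm uses. The recursion you derive and the bound $\mathbb{E}[d^2(\mu_k,\ol\rho)] \leq \var(\rho)/k$ are correct and are in fact what the paper alludes to in Remark~\ref{r:precision}(i). Your identification of the two technical subtleties (deriving the interpolation form from the bare triangle comparison of Definition~\ref{d:npc}, and handling the variance inequality for general $\rho \in \cP^2(\cT)$ rather than finitely supported measures) is accurate; both are genuine but standard, and Sturm addresses them in the cited reference.
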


\begin{cor}\label{c:weighted}
The convergence properties of the sequence of inductive means given by
Algorithm~\ref{a:sturm} continue to hold on any probability
distribution $\rho \in \cP^2(\cT)$, by sampling the points of $\cT$
according to the specified distribution.
\end{cor}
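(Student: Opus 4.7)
The plan is to observe that this corollary is essentially Theorem~\ref{t:sturm} restated in algorithmic form. First I would note that Algorithm~\ref{a:sturm}, as written, draws trees uniformly from a finite set $\{T^1,\ldots,T^r\}$, which corresponds to the specific measure placing mass $1/r$ on each $T^i$. The modification needed for arbitrary $\rho \in \cP^2(\cT)$ is simply to replace the uniform sampling step with sampling from~$\rho$, and to broaden the ambient space from $\cT_n$ to~$\cT$. All other steps (computing the geodesic $\gamma$ from the sampled point to $\mu_k$ and setting $\mu_{k+1} = \gamma_{1/(k+1)}$) remain meaningful: by Lemma~\ref{l:geodesics}, any global NPC space supports a unique geodesic between each pair of points, so $\mu_{k+1}$ is unambiguously defined.

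Next I would verify that with this modification, the sequence of outputs $\mu_1,\mu_2,\ldots$ coincides with the sequence of inductive mean values of the i.i.d.\ samples $X^1,X^2,\ldots \sim \rho$, in the sense of the evident extension of Definition~\ref{d:indMean} to an arbitrary global NPC space. Proposition~\ref{p:var} guarantees that the Fr\'echet mean $\ol\rho$ exists and is unique once $\rho\in\cP^2(\cT)$, and Theorem~\ref{t:sturm} asserts directly that the inductive means of i.i.d.\ samples from $\rho$ converge to $\ol\rho$ with probability~$1$. That is the entire mathematical content of the convergence claim.

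Finally I would confirm that the stopping criteria remain sensible. Since $\mu_k \to \ol\rho$ almost surely, the sequence is almost surely Cauchy, so for any $\varepsilon>0$ there almost surely exists $K$ large enough that the pairwise distances $d(\mu_j,\mu_\ell)$ among the last $N$ iterates all drop below~$\varepsilon$, so termination occurs almost surely. The bound on the expected distance to the mean cited in Remark~\ref{r:precision}(i) is simply the specialization of the general bound $\var(\rho)/(k+1)$ derived by Sturm in his proof of Theorem~4.7, and it applies verbatim here. In short, the only real work is bookkeeping: the hardest (and essentially only) step is being careful that the algorithm as described really produces the inductive means of an i.i.d.\ sequence drawn from~$\rho$, so that Sturm's theorem applies without modification.
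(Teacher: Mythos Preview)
Your proposal is correct and matches the paper's approach: the paper does not give an explicit proof of this corollary, treating it as immediate from Theorem~\ref{t:sturm} (and noting only that Ba\v c\'ak observed it independently). Your write-up simply spells out the bookkeeping the paper leaves implicit, namely that once the sampling step is replaced by i.i.d.\ draws from~$\rho$ the algorithm produces precisely the inductive means to which Sturm's theorem applies.
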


Note that Corollary~\ref{c:weighted} was independently observed by
Ba\v c\'ak~\cite{Bacak12}.

\begin{rem}\label{r:LLN}
As an application of Corollary~\ref{c:weighted}, Markov chain Monte
Carlo (MCMC) simulations produce phylogenetic trees sampled
independently from a fixed finite-variance distribution on the entire
space $\cT_n$.  Calculating inductive mean values of repeated samples
from this distribution results in a method to approximate the mean of
the distribution.
\end{rem}

%%%%%%%%%%%%%%%%%%%%%%%%%%%%%%%%%%%%%%%%%%%%%%%%%%%%%%%%%%%%%%%%%%%%%%
\subsection{NPC orthant spaces}\label{sub:orthant}%%%%%%%%%%%%%%%%%%%%

\begin{defn}\label{d:orthant}
The \emph{orthant space} $\Or(\cE,\Omega)$ consists of a set $\cE$ of
\emph{axes}\/ together with a simplicial complex $\Omega\subseteq
2^\cE$, called the \emph{scaffold complex}.  Two elements of $\cE$ are
\emph{compatible}\/ if they appear in some face of $\Omega$.  Each
face $F\in\Omega$ is associated with a copy $\Or_F$ of $\RR^F_+$, the
\emph{orthant}\/ associated with $F$.  The orthant space
$\Or(\cE,\Omega)$ is the union of the orthants $\Or_F$ for
$F\in\Omega$, with points identified whenever their nonzero
coordinates agree on all elements~of~$\cE$.
\end{defn}

An orthant space can be thought of as constructed by gluing together
orthants according to instructions laid out by the scaffold complex,
and in fact the scaffold complex is (homeomorphic to) the \emph{link}
of the origin in the orthant space $\Or(\cE,\Omega)$.

\begin{ex}\label{e:tree}
Tree space $\cT_n$ is an orthant space: $\cE$ corresponds to the set
of splits on $\{0,\ldots,,n\}$, and $\Omega$ corresponds to the
collection of sets of splits that are compatible in the sense of
Section~\ref{ss:treespace}.
\end{ex}

A \emph{path}\/ in an orthant space $\Or(\cE,\Omega)$ is defined as in
Section~\ref{ss:geomNPC}.  A locally length-minimizing path is a
\emph{geodesics}, which always consists of a finite number of linear
legs through intermediate orthants of~$\cT$, as in the case of tree
space (Section~\ref{ss:geodesics}).  As with tree space,
$\Or(\cE,\Omega)$ is always path-connected.

Although any orthant space is geodesic, it may not be global NPC.

\begin{ex}
The space $\cT=\Or(\cE,\Omega)$, where $\cE$ is indexed by $\{1,2,3\}$
and the scaffold complex $\Omega$ has facets
% $\{\nothing,1,2,3,12,13,23\}$,
$\{1,2\}$, $\{1,3\}$, and $\{2,3\}$ is not global NPC.  Indeed, the
two points $x = (1,0,0)$ and $y = (0,1,1)$ in~$\cT$ have a pair of
geodesics between them, namely $[(1,0,0), (0,1/2,0)] \cup [(0,1/2,0),
(0,1,1)]$ and $[(1,0,0), (0,0,1/2)] \cup [(0,0,1/2), (0,1,1)]$.  By
Lemma~\ref{l:geodesics}, $\cT$ cannot be global NPC.
\end{ex}

% Billera, Holmes, and Vogtmann [{\cite[Lemma~4.1]{BHV01}}]
M.\thinspace{}Gromov \cite{gromov87} determined conditions on~$\Omega$
that characterize when $\Or(\cE,\Omega)$ has nonpostive curvature (in
fact, Gromov worked with arbitrary cubical complexes), based on the
following standard notion from geometric combinatorics.

\begin{defn}
The simplicial complex $\Omega$ is \emph{flag} if $F \in \Omega$
whenever all pairs of elements in~$F$ are compatible.
\end{defn}

\begin{prop}[{\cite{gromov87}}]\label{p:scaffold}
An orthant space $\Or(\cE,\Omega)$ is global NPC if and only if
$\Omega$ is flag.
\end{prop}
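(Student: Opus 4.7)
The result is essentially the specialization of M.~Gromov's link condition for cubical complexes to the orthant setting, so my plan is to reduce the statement to a curvature check at a single point (the origin) and then invoke the classical ``all-right spherical flag'' lemma.

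First, I would observe that $\Or(\cE,\Omega)$ is contractible: the straight-line homotopy $H_s(x)=sx$ carries every point to the origin along a path that stays in the orthant~$\Or_F$ containing~$x$. In particular $\Or(\cE,\Omega)$ is simply connected and complete, so by the Cartan--Hadamard--Alexandrov theorem (see \cite[II.4.1(2)]{bridson-haefliger}, say, though the reference is not in this excerpt) the space is global NPC if and only if it is \emph{locally} NPC. Locally NPC, for a piecewise Euclidean complex built out of convex polyhedral pieces, is equivalent by a theorem of Gromov to the statement that the link at every point is a CAT(1) spherical complex.

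Next I would reduce the link condition to a single point, namely the origin. A point~$p$ in the interior of the orthant $\Or_F$ has a Euclidean neighborhood, so its link is a round sphere (CAT(1) automatically). A point~$p$ on a proper face $\Or_G \subset \Or_F$ for various~$F$ has link which splits metrically as a spherical join of a round sphere (coming from the directions tangent to $\Or_G$) with the link of the origin in the orthant subspace $\Or(\cE',\Omega')$ determined by the axes of~$\cE$ whose corresponding coordinate vanishes at~$p$; by the join lemma for CAT(1) spaces (\cite[II.3.14]{bridson-haefliger}), CAT(1) of the whole link is equivalent to CAT(1) of this residual link of the origin in $\Or(\cE',\Omega')$. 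Here $\Omega'$ is the link of the appropriate face in~$\Omega$, and flagness of~$\Omega$ passes to the link. So it suffices to show:
\[
\text{$\Or(\cE,\Omega)$ is locally NPC at the origin}\iff\Omega\text{ is flag.}
\]

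The link $L$ of the origin inherits a natural piecewise spherical metric: each face $F\in\Omega$ of cardinality $k+1$ contributes the \emph{all-right spherical simplex} $L\cap\Or_F$, a spherical $k$-simplex in which every edge has length $\pi/2$. This is Gromov's classical setting. My plan for the two directions is then as follows.

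For ``flag $\Rightarrow$ NPC'', I would invoke Gromov's lemma that an all-right spherical complex is CAT(1) exactly when its underlying simplicial complex is flag; the systole argument shows that the shortest locally geodesic loop has length $\geq 2\pi$ because any such loop would traverse a sequence of simplices whose combined vertex set would violate flagness if the loop were shorter. For ``NPC $\Rightarrow$ flag'', I would argue contrapositively: suppose a set $\{e_0,\dots,e_k\}\subseteq\cE$ is pairwise compatible but does not lie in any face of~$\Omega$, and choose such a set with minimal~$k$. Then the boundary of the corresponding would-be all-right spherical simplex sits inside~$L$, but the simplex itself does not, producing a closed piecewise geodesic path in~$L$ built from great-circle arcs of length~$\pi/2$. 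With~$k$ chosen minimal one can arrange this path to be a locally geodesic loop of length at most $(k+1)\cdot\pi/2$, hence strictly less than~$2\pi$ when $k\leq 2$; for larger~$k$ one uses the minimality of~$k$ and the link-preservation of flagness failure to reduce back to this case. Such a short closed geodesic loop in a CAT(1) space of diameter~$\leq\pi$ is impossible, contradicting locally NPC at the origin.

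The main obstacle in this plan is the Gromov flag lemma itself: showing that an all-right spherical complex is CAT(1) precisely under flagness is the substantive piece, and is not trivial. In a complete writeup I would cite Gromov~\cite{gromov87} or, for a self-contained treatment, Bridson--Haefliger~\cite[II.5.18 and II.5.24]{bridson-haefliger}, rather than reproving it. Everything else is bookkeeping: contractibility, reducing to the link of the origin, and identifying that link with the standard all-right spherical complex on~$\Omega$.
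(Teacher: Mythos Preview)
The paper does not prove this proposition at all: it is stated with the attribution \cite{gromov87} and immediately used. Your plan is the standard route one would take to unpack Gromov's theorem in this special case---contractibility to pass from global to local via Cartan--Hadamard, reduction to the link of the origin, identification of that link as the all-right spherical complex on~$\Omega$, and invocation of the flag lemma---and it is correct as a sketch. Since the paper treats the result as a black box, there is nothing to compare; your write-up simply supplies what the paper omits.

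One small comment on the ``NPC $\Rightarrow$ flag'' direction: your description of the case $k\geq 3$ is a bit loose. The cleanest way to handle a minimal empty simplex $\{e_0,\dots,e_k\}$ with $k\geq 3$ is not to produce a short loop directly but to pass to the link of~$e_0$ inside the all-right complex, observe that $\{e_1,\dots,e_k\}$ is again a minimal empty simplex there (minimality of the original set guarantees every proper subset, in particular every triple $\{e_0,e_i,e_j\}$, is a face), and induct on~$k$ until one reaches the $k=2$ triangle of perimeter $3\pi/2$. This is presumably what you meant by ``link-preservation of flagness failure,'' and since you intend to cite \cite{gromov87} or Bridson--Haefliger for the lemma anyway, it does not affect the soundness of the plan.
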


In particular, tree space~$\cT_n$ is global NPC, since its scaffold
complex is defined precisely by the pairwise compatibility between its
splits (this is the proof given in \cite{BHV01}).  Generally, any
global NPC orthant space can be defined entirely by its set of
compatible elements.

\begin{defn}
The \emph{scaffold graph} $\cG(\cE,\Omega)$ of an orthant space
$\Or(\cE,\Omega)$ is the graph with vertex set~$\cE$ whose edges are
the pairs of compatible elements of~$\cE$.
\end{defn}

\begin{lem}
The orthants of a global NPC orthant space $\Or(\cE,\Omega)$ are
precisely the \emph{clique sets}\/ (sets of mutually compatible edges)
of the scaffold graph~$\cG(\cE,\Omega)$.\qed
\end{lem}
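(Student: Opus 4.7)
The plan is to show that the indexing set for orthants (namely, the faces of $\Omega$) coincides with the collection of cliques of the scaffold graph $\cG(\cE,\Omega)$. By Definition~\ref{d:orthant}, the orthants of $\Or(\cE,\Omega)$ are in bijection with the faces $F \in \Omega$, so it suffices to show that $F \subseteq \cE$ is a face of~$\Omega$ if and only if $F$ is a clique of~$\cG(\cE,\Omega)$.

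First I would handle the easy direction: if $F \in \Omega$, then for any $a,b \in F$, the pair $\{a,b\}$ lies in the face~$F$ of~$\Omega$, so $a$ and~$b$ are compatible by the definition of compatibility in Definition~\ref{d:orthant}; hence $\{a,b\}$ is an edge of $\cG(\cE,\Omega)$, and $F$ is a clique. This direction uses nothing beyond the definitions and does not require the NPC hypothesis.

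For the converse direction, suppose $F$ is a clique of~$\cG(\cE,\Omega)$, so every pair of elements of $F$ is compatible. Since $\Or(\cE,\Omega)$ is global NPC, Proposition~\ref{p:scaffold} tells us that $\Omega$ is flag, which by definition means exactly that $F \in \Omega$ whenever all pairs of elements of $F$ are compatible. Hence $F$ is a face of~$\Omega$, completing the bijection. The only substantive step here is invoking Proposition~\ref{p:scaffold}; without the NPC hypothesis, cliques that are not faces can exist (for example in the non-NPC ``empty triangle'' orthant space of the example just before Proposition~\ref{p:scaffold}, where $\{1,2,3\}$ is a clique but not a face), so flagness is precisely the property needed to rule such pathologies out.
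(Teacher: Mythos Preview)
Your proposal is correct and is precisely the argument the paper has in mind: the lemma is stated with a \qed and no proof, indicating that it is meant to follow immediately from Proposition~\ref{p:scaffold} (the flag condition) together with the definitions, exactly as you have written it out.
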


Thus there is a one-to-one correspondence between orthant spaces and
graphs.  A general global NPC orthant space $\Or(\cE,\Omega)$ need not
have all of its maximal orthants the same dimension, since maximal
orthants correspond to the maximal cliques in $\cG(\cE,\Omega)$.  The
dimension of the maximal orthants, however, is not relevant to any of
the results in the previous sections, except when the dimension is
given explicitly.

\begin{ex}
The space of trees in which each split is associated with an
$m$-dimensional vector instead of a single length is an NPC orthant
space.  In this case, the scaffold graph is the scaffold graph of tree
space $\cT_n$, with each vertex replaced by $K_m$, the complete graph
on $m$ vertices.  Our software implementation also computes geodesics
and means in this space.
\end{ex}

\begin{ex}\label{LAN}
The generality of scaffold graphs to define any global NPC orthant
space provides an opportunity to extend the statistical structures of
this paper to a wider range of applications.  As one example, consider
a computer network specified by its computational devices and the
graph $\cG$ denoting those pairs of computers that are compatible with
each other.  A \emph{local area network (LAN)}\/ for this system is a
set $C$ of mutually compatible computers---that is, a clique of $\cG$.
A \emph{local network configuration (LNC)}\/ is a LAN $C$ together
with a measure $w_e$ of participation of each computer $e\in C$ in the
LAN $C$.  Some important areas of analysis of the network $\cG$ might
be the relationship between the LNCs associated with $\cG$, in terms
of the number and participation weight of common computers and the
relative compatibility of the noncommon computers (although it does
\emph{not}\/ model chaining-related measures such as the number of
nodes in a communications path).  The global NPC orthant space
generated by $\cG$ would be a good framework for answering questions
like this associated with the LANs of the network.
\end{ex}

The combinatorics of geodesics in Section~\ref{s:treespace}
generalizes immediately to global NPC orthant spaces, using the
generalized notation in this section.  None of the proofs in
Sections~\ref{s:treespace}--\ref{s:computing} rely on
% peculiarities
particulars of tree space except the flag property.  Thus we have the
following.

\begin{cor}\label{c:npc}
The results in Sections~\ref{s:treespace}--\ref{s:computing} (except
for statements specifying dimension) extend to arbitrary global NPC
orthant spaces, using the definitions in this section.  In particular,
the GTP algorithm \cite{OwenProvan10} for finding geodesics in tree
space, Sturm's Algorithm (Algorithm~\ref{a:sturm}), and the Descent
Method (Algorithm~\ref{a:descent}) apply in the more general setting
of global NPC orthant spaces.
\end{cor}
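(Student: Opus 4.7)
The plan is to verify that every proof in Sections~\ref{s:treespace}\textendash\ref{s:computing} appeals only to a short list of ingredients which remain valid for an arbitrary global NPC orthant space $\Or(\cE,\Omega)$: namely (a) the piecewise Euclidean structure, where each orthant is a standard Euclidean cone and distances within a single orthant are Euclidean; (b) the combinatorial notion of compatibility between axes, encoded in the scaffold graph $\cG(\cE,\Omega)$; and (c) the global NPC property, which by Proposition~\ref{p:scaffold} is equivalent to $\Omega$ being flag, and which guarantees uniqueness of geodesics (Lemma~\ref{l:geodesics}) and strict convexity of the squared-distance and variance functions (Corollary~\ref{c:strictly}, Proposition~\ref{p:convex}).

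For Section~\ref{s:treespace}, I would observe that the definition of a support $(\cA,\cB)$ and the conditions (P1)\textendash(P3), together with the leg description in Theorem~\ref{t:prelimThm} and the length formula~\eqref{eq:pathlength2}, are stated entirely in terms of compatibility of axes and their lengths. Since compatibility is now interpreted via $\cG(\cE,\Omega)$ rather than via splits on a leaf set, the proof of Theorem~\ref{t:prelimThm} transcribes verbatim. Uniqueness of the minimal support (Lemma~\ref{uniquecover}) uses only uniqueness of geodesics, hence Lemma~\ref{l:geodesics}. Crucially, the correctness of the GTP algorithm of~\cite{OwenProvan10} rests on these same combinatorial properties, so it carries over. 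I would then remark that the special handling of common edges in Remark~\ref{r:common_edges} depends only on the fact that an axis is never compatible with itself, which is built into Definition~\ref{d:orthant}.

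For Section~\ref{s:mean}, the variance function formula~\eqref{eq:S} and its derivative~\eqref{eq:partialS} depend only on the leg description of Theorem~\ref{t:prelimThm}. Strict convexity (Proposition~\ref{p:convex}) comes from Sturm's general result cited as Corollary~\ref{c:strictly}, valid in any global NPC space. Continuous differentiability on the interior of each maximal orthant (Theorem~\ref{t:diff}) uses only the fact that the partial derivatives are independent of the chosen (non-minimal) representation of the support; that argument is purely combinatorial. Sturm's Algorithm and Theorem~\ref{t:sturmPoints} extend directly via Theorem~\ref{t:sturm} and Corollary~\ref{c:weighted}. For Section~\ref{s:vistal}, the polyhedrality of vistal facets (Propositions~\ref{p:cone} and~\ref{p:polyhedral}) comes from the fact that squaring linearizes inequalities of the form $\norm{A_i}^2/\norm{B_i}^2 \leq \norm{A_{i+1}}^2/\norm{B_{i+1}}^2$; the incompatibility graph and residual-graph machinery of Sections~\ref{ss:incompatibility}\textendash\ref{ss:valid} references only the compatibility relation, so the canonical description Theorem~\ref{t:vistal_face_combinatorics} and the polyhedral subdivision Theorem~\ref{t:cell_complex} transfer unchanged. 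Finally, Section~\ref{s:computing}'s Descent Method uses only differentiability within each maximal orthant together with uniqueness of the minimum, both established in the general setting.

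The one place that requires care, and which I expect to be the main subtlety rather than a serious obstacle, is the handling of dimension-dependent statements: the ``$2n-1$'' appearing in Proposition~\ref{p:polyhedral} and in Lemma~\ref{l:full_dimensional}, and the analogous quantity in Example~\ref{e:treeSigma}, must be replaced by $\dim(\Or)$ for the maximal orthant in question, since in a general global NPC orthant space different maximal orthants may have different dimensions (they correspond to possibly differently sized maximal cliques of $\cG(\cE,\Omega)$). Once these substitutions are made\textemdash{}and the corollary's parenthetical ``except for statements specifying dimension'' explicitly permits them\textemdash{}the argument is a uniform transcription: I would conclude with a one-line remark that no proof in Sections~\ref{s:treespace}\textendash\ref{s:computing} invokes the leaf-label structure, the splits description of trees, or the specific dimension $2n-1$ in any essential way, so the entire machinery applies verbatim to $\Or(\cE,\Omega)$ whenever $\Omega$ is flag.
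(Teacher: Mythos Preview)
Your proposal is correct and follows essentially the same approach as the paper, which simply asserts in one sentence that none of the proofs in Sections~\ref{s:treespace}--\ref{s:computing} rely on particulars of tree space except the flag property. Your version is considerably more thorough---you walk through each section and identify the specific ingredients (piecewise Euclidean structure, compatibility via the scaffold graph, and the global NPC/flag property) that make the transcription work---but the underlying idea is identical to the paper's.
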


% \comment{SP: I tried to figure out a way to generalize
% Theorem~\ref{t:diff} to more general functions} The only reasonable
% function that does this is of the form
% $F(x)=f(d_{t^1}(x),\ldots,d_{t^r}(x))$, where
% $$%
%   f(d) = \sum_{i=1}^r d_i^2g_i(d_i)
% $$
% with $g_i$ a continuously differentiable functions of the distance
% $d_i = d_{t^i}$.  This doesn't look too interesting, although it does
% include finding the point of minimum $k$th moment of a population, for
% whatever that is worth.  \comment{SP: Is this worth including here?
% MO: probably not.}

\begin{rem}\label{r:summary}
The results here extend even further.  For example, Ardila, Owen, and
Sullivant~\cite{ArdilaOwenSullivant} extend the global NPC theory, and
in particular the GTP algorithm, to the case of \emph{cubical
complexes}, where orthants are replaced by Euclidean cubes.  Sturm's
algorithm extends to these global NPC cubical complexes, and there is
every reason to believe that the idea of vistal cells and the Descent
Method can be extended as well, although without the polyhedrality.
Furthermore, the results extend to negative edge lengths, as described 
in Remark~\ref{r:neg_edges}.
\end{rem}  

\begin{rem}\label{r:neg_edges}
Negative values can also be allowed for the coordinates in a global NPC orthant
space.  In this case, it remains a global NPC space, and the results listed in Corollary~
\ref{c:npc} for NPC orthant spaces also hold here, with the following modification.
If a negative value appears in a common 'split', then that negative value is used 
in the geodesic calculations.  However, if it appears in a split that is not in common,
then its absolute value is used in the geodesic calculations.  
\end{rem}

\end{document}